\numberwithin{equation}{section}
\newtheorem{theorem}{Theorem}[section]
\newtheorem{proposition}[theorem]{Proposition}
\newtheorem{corollary}[theorem]{Corollary}
\newtheorem{lemma}[theorem]{Lemma}
\newtheorem{problem}[theorem]{Problem}
\newtheorem{example}[theorem]{Example}
\newtheorem{remark}[theorem]{Remark}
\newtheorem*{rep@theorem}{\rep@title}
\newcommand{\newreptheorem}[2]{%
\newenvironment{rep#1}[1]{%
 \def\rep@title{#2 \ref{##1}}%
 \begin{rep@theorem}}%
 {\end{rep@theorem}}}
\theoremstyle{definition}
\newtheorem{definition}[theorem]{Definition}
\def\cocoa{{\hbox{\rm C\kern-.13em o\kern-.07em C\kern-.13em o\kern-.15em A}}}
\newcommand{\III}{{\mathcal {I}}}
\newcommand{\des}{{\mathrm {des}}}
\newcommand{\maj}{{\mathrm {maj}}}
\newcommand{\symm}{{\mathfrak{S}}}
\newcommand{\CC}{{\mathbb {C}}}
\newcommand{\EE}{{\mathbb {E}}}
\newcommand{\NN}{{\mathbb {N}}}
\newcommand{\PP}{{\mathbb {P}}}
\newcommand{\RR}{{\mathbb {R}}}
\newcommand{\ZZ}{{\mathbb {Z}}}
\newcommand{\VV}{{\mathbf{V}}}
\newcommand{\CCC}{{\mathcal{C}}}
\newcommand{\AAA}{{\mathcal{A}}}
\newcommand{\MMM}{{\mathcal{M}}}
\newcommand{\VVV}{{\mathcal{V}}}
\newcommand{\one}{{\mathbf{1}}}
\newcommand{\exc}{{\mathrm {exc}}}
\begin{document}

\title[Moments of permutation statistics by cycle type]
{Moments of permutation statistics by cycle type}

\author[Zachary Hamaker]{Zachary Hamaker}
\author[Brendon Rhoades]{Brendon Rhoades}

\address
{Department of Mathematics \newline \indent
University of Florida \newline \indent
Gainesville, FL, 32611, USA}
\email{zhamaker@ufl.edu}

\address
{Department of Mathematics \newline \indent
University of California, San Diego \newline \indent
La Jolla, CA, 92093, USA}
\email{bprhoades@ucsd.edu}

\begin{abstract}
Beginning with work of Zeilberger on classical pattern counts, there are a variety of structural results for moments of permutation statistics applied to random permutations.
Using tools from representation theory, Gaetz and Ryba generalized Zeilberger's results to uniformly random permutations of a given cycle type.
We introduce \emph{regular statistics} and characterize their moments for all cycle types, generalizing all results in this literature that we are aware of.
Our approach splits into two steps: first characterize such statistics as linear combinations of indicator functions for partial permutations, then identifying the moments of such indicators. 
As an application, we show that many regular statistics exhibit a law of large numbers depending only on fixed point counts and a similar variance property that depends also on two--cycle counts.

These results first appeared in~\href{https://arxiv.org/abs/2206.06567}{arXiv:2206.06567}, which is no longer intended for publication.
Our original proof of the moment result for indicators of partial permutations relied on representation theory and symmetric functions.
A referee generously shared a combinatorial argument, allowing us to give a self-contained treatment of these results that does not rely on representation theory.
\end{abstract}

\maketitle

\section{Introduction}
\label{sec:Introduction}
A permutation statistic is a function that is defined uniformly on all symmetric groups simultaneously. 
Fundamental questions in a wide variety of computational fields including computer science and statistics are best understood by characterizing the typical behavior of permutation statistics.
We introduce a novel family of permutation statistics called \emph{regular statistics} and show how to understand their asymptotic properties when applied to uniformly random permutations of a given cycle type.
Regular statistics can be thought of as weighted counts of small patterns and include (bivincular) pattern counts, cycle counts and a variety of other well known statistics.

%\Brendon{Should we specify that permutation staitstics are complex-valued functions in this paper?}
%\Zach{I don't think we have to since we never really use this.}

Let $\symm_n$ be the group of permutations of $[n] = \{1,2,\dots,n\}$.
A natural notion of `pattern' in the permutation $\pi \in \symm_n$  is as an instance where $\pi$ maps certain distinct positions $I = (i_1,\dots,i_k)$ to distinct values $J = (j_1,\dots,j_k)$.
Pattern counting statistics then count patterns with various restrictions on how these positions and values should relate.
We call the pair $(I,J)$ a \emph{partial permutation} of \emph{size} $k$.
Define
\begin{equation}
    \label{eq:one_IJ}
    1_{ij}(\pi) = \begin{cases}
        1 & \pi(i) = j\\
        0 & \text{else}
    \end{cases}
    \quad \text{and} \quad 1_{IJ} = 1_{i_1 j_1} \cdot 1_{i_2 j_2} \cdot \ldots \cdot 1_{i_k j_k}
\end{equation}
with the latter products taken pointwise.
Then weighted pattern counts can be expressed as linear combinations of $1_{IJ}$'s.
Much as permutations have a cycle type, the partial permutation $(I,J)$ has a \emph{cycle--path type} $(\mu,\nu)$ where $\mu$ and $\nu$ are the partitions of cycle and path lengths, respectively, when viewing $(I,J)$ as a directed graph.

For $\lambda$ a partition of $n$, let $K_\lambda$ be the set of permutations in $\symm_n$ whose cycle type is $\lambda$.
Also, let $m_i(\lambda)$ be the number of parts in $\lambda$ of size $i$.
For $f$ a permutation statistic, define $\mathbb{E}_\lambda[f]$ as the expected value of $f$ applied to a uniformly random element of $K_\lambda$.
Our main tool for analysis is:

\begin{theorem}
    \label{t:moment}
    Let $(I,J)$ be a partial permutation of size $k$ with cycle--path type $(\mu,\nu)$ and $|I \cup J| = m$.
    Then there is a polynomial $f_{(\mu,\nu)} \in \ZZ[n,m_1,\dots,m_k]$ of degree $k$ where $\deg n = 1$ and $\deg m_i = i$ so that for any partition $\lambda$
\begin{equation}
(n)_m \cdot \EE_\lambda[1_{IJ}] = f_{(\mu,\nu)}(n,m_1(\lambda),m_2(\lambda),\dots,m_k(\lambda)).
\label{eq:moment}    
\end{equation}
\end{theorem}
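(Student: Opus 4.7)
The plan is to use conjugation symmetry to reduce to a combinatorial count, then establish polynomiality via a decomposition along the components of $(I,J)$ and a M\"obius inversion argument. Since $K_\lambda$ is closed under conjugation and $1_{IJ}(\tau \pi \tau^{-1}) = 1_{\tau^{-1}(I), \tau^{-1}(J)}(\pi)$, averaging over $\tau \in \symm_n$ and using $|\mathrm{Stab}(I,J)| = (n-m)!$ yields, for any fixed $\pi \in K_\lambda$,
\[
(n)_m \cdot \EE_\lambda[1
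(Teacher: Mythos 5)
Your proposal breaks off after the opening display, so only the plan survives: reduce $(n)_m\,\EE_\lambda[1_{IJ}]$ to a count of injections $[m]\hookrightarrow[n]$ compatible with $(I,J)$ relative to a fixed $\pi\in K_\lambda$, then get polynomiality by decomposing along components of $G(I,J)$ and applying M\"obius inversion. That outline matches the paper's argument exactly, and the conjugation step you begin (using $1_{IJ}(\tau\pi\tau^{-1}) = 1_{\tau^{-1}(I)\,\tau^{-1}(J)}(\pi)$ and the stabilizer of size $(n-m)!$) is the correct setup. But as written there is no proof: everything after the reduction is missing, and that is where all the substance lies.

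Concretely, two things must still be supplied. First, the count $c_{(\mu,\nu)}(\lambda)$ of \emph{all} (not necessarily injective) compatible functions $\psi:[m]\to[n]$: each $\ell$-cycle of $(I,J)$ must land on an $\ell$-cycle of $\pi$, contributing a factor $\ell\cdot m_\ell(\lambda)$, and each length-$\ell$ path must embed in a cycle of $\pi$ of length at least $\ell+1$, contributing $n-\sum_{i\le \ell} i\, m_i(\lambda)$; this product is what gives a polynomial of degree $|\mu|+|\nu|=k$ in the grading $\deg n = 1$, $\deg m_i = i$. Second, and this is the step most likely to be overlooked, M\"obius inversion over the set-partition lattice $\Pi_m$ expresses the injection count as $\sum_{\rho}\mu(\hat 0_m,\rho) f(\rho)$, where $f(\rho)$ counts compatible functions constant on the blocks of $\rho$; one must show each such $f(\rho)$ is again either $0$ or equal to $c_{(\mu',\nu')}(\lambda)$ for a cycle--path type $(\mu',\nu')$ of total size at most $k$ (by merging the cycles and paths forced together by $\rho$, with case analysis on whether the identified vertices lie in cycles or paths). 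Without that verification, only the $\rho=\hat 0_m$ term is known to be a polynomial of the right degree, and the inversion formula proves nothing. Until these two pieces are written out, the proposal is an announcement of the paper's strategy rather than a proof.
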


From representation theoretic considerations, it is relatively straightforward to prove $\EE_\lambda[1_{IJ}]$ depends only $n$ and $m_1(\lambda),\dots,m_k(\lambda)$.
With this perspective in mind, the surprising part of this result is the polynomiality, which can be interpreted as a representation stability result.

Our original proof of Theorem~\ref{t:moment} appears in~\cite{HR}, which is no longer intended for publication.
There, we interpreted $\EE_{\lambda}[1_{IJ}]$ as the evaluation of a class function on an element of $K_\lambda$, giving us access to tools from representation theory and symmetric function theory.
Using these methods, we gave a combinatorial formula for $\EE_{\lambda}[1_{IJ}]$ from which we derived Theorem~\ref{t:moment}.
In that setting, the unusual grading for $f_{(\mu,\nu)}$ is natural.
However, a referee sketched a direct combinatorial argument based on M\"obius inversion in the set partition lattice, which is both elementary and far easier.
We present it in Section~\ref{sec:Moments}, with our original representation theoretic approach presented in a separate paper~\cite{HR2}.
By the theory of character polynomials, Theorem~\ref{t:moment} can also be interpreted in terms of irreducible characters or symmetric functions.
See~\cite[\S 7]{HR2} for further details.

Using Theorem~\ref{t:moment}, the moments of linear combinations of $1_{IJ}$'s can be easily understood if they can be grouped by cycle--path type.
Our definition of regular statistics, which we now present, is designed to facilitate such groupings.

We say the partial permutation $(U,V)$ is a \emph{packed} if $U \cup V = [m]$ with $m$ a positive integer.
For $S = \{s_1 < \dots < s_m\} \subseteq \NN$ with $U = (u_1,\dots,u_k)$ and $V = (v_1,\dots,v_k)$, let $S(U) = (s_{u_1},\dots,s_{u_k})$ and $S(V) = (s_{v_1},\dots,s_{v_k})$.
Note every partial permutation $(I,J)$ is $S(U,V)$ for some packed partial permutation $(U,V)$ and set $S$.
Lastly, for $C \subseteq [m-1]$ let $\binom{\NN}{m}_C$ be the set of $m$--element subsets $S = \{s_1 < \dots < s_m\}$ of $\NN$ so that for $i \in C$ we have $s_{i+1} = s_i + 1$.
\begin{definition}
	\label{d:intro-regular}

For $(U,V)$ a packed partial permutation with $U \cup V = [m]$, $C \subseteq [m-1]$ and $f \in \CC[x_1,\dots,x_m]$, the associated \emph{constrained translate} is
\[
T^f_{(U,V),C} = \sum_{L = \{\ell_1 < \dots < \ell_m\} \in \binom{\NN}{m}_C} f(\ell_1,\dots,\ell_m) \cdot 1_{L(U)\,L(V)}.
\]
We say $T^f_{(U,V),C}$ has \emph{size} $k$, \emph{shift} $|C|$ and \emph{power} $k + \deg f - |C|$.

A \emph{regular statistic} is a linear combination of constrained translates.
Its size, shift and power are defined analagously (see Definition~\ref{def:regular-size} for details).
\end{definition}

Regular statistics generalize several well-known families of statistics, including classical pattern counts, vincular pattern counts and the bivincular statistics introduced in~\cite{DK}.
For example, the \emph{major index} $\maj$ defined by 
\begin{equation}
\label{eq:maj}
\maj(\pi) = \sum_{k: \pi_k > \pi_{k+1}} k = \sum_{i,j<k} i \cdot 1_{(i,i{+}1)(k,j)}
\end{equation}
is a bivincular statistic.
To see it is regular, we realize it using vincular translates as
\begin{align*}
\maj
 =&\ \  T^{x_1}_{(12)(21),\{1\}} + T^{x_1}_{(12)(31),\{1\}}+ T^{x_1}_{(12)(32),\{1\}} + T^{x_2}_{(23)(21),\{2\}}\\ &+ T^{x_2}_{(23)(31),\{2\}} +
T^{x_1}_{(12)(43),\{1\}}
+ T^{x_2}_{(23)(41),\{2\}} + T^{x_3}_{(34)(12),\{3\}}.
\end{align*}
Here, the term $T^{x_2}_{(23)(31),\{2\}}$ captures terms from the righthand side of~\eqref{eq:maj} satisfying $j < i = k-1$.

A key feature of this definition is that the product of constrained translates is a regular statistic.
Therefore, regular statistics are closed under multiplication.
Moreover, the size, shift and power of regular statistics are subadditive under products.
Note for $(U,V)$ a packed partial permutation and $|S| = |U \cup V|$ that the graphs of $(U,V)$ and $(S(U),S(V))$ are isomorphic.
Therefore, by applying Theorem~\ref{t:moment} and summing over terms in a constrained translate, we have:
\begin{theorem}
\label{t:regular-moment}
Let $\Psi$ be a regular statistic with size $k$, shift $q$ and power $p$, and let $d \geq 1$.
	Then there exists a polynomial $f_{\Psi,d} \in \ZZ[n,m_1,\dots,m_k]$ of degree at most $dp + dq$ where $\deg n = 1$ and $\deg m_i = i$ so that for any partition $\lambda$
	\[
	(n)_{dq}\EE_\lambda[\Psi^d] = f_{\Psi,d}\left(n,m_1(\lambda),m_2(\lambda),\dots,m_{dk}(\lambda)\right).
	\]
\end{theorem}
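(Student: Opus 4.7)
The plan is to bootstrap from Theorem~\ref{t:moment} via the closure properties of regular statistics asserted just before the theorem statement. Since products of constrained translates are again regular with subadditive size, shift, and power, $\Psi^d$ is a regular statistic of size $\leq dk$, shift $\leq dq$, and power $\leq dp$. Writing $\Psi^d = \sum_j c_j\,T^{f_j}_{(U_j,V_j),C_j}$ with $U_j\cup V_j = [m_j]$, size $k_j\leq dk$, shift $q_j = |C_j|\leq dq$, and power $p_j = k_j + \deg f_j - q_j\leq dp$, linearity of expectation reduces the problem to a single constrained translate together with a bookkeeping step at the end.

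Fix one constrained translate $T^f_{(U,V),C}$ with $U\cup V=[m]$, $|C|=q$, and size $k$. As $L$ varies over $\binom{\NN}{m}_C$, the directed graph of $(L(U),L(V))$ is isomorphic to that of $(U,V)$, so every summand $1_{L(U)L(V)}$ shares one cycle--path type $(\mu,\nu)$. Applying Theorem~\ref{t:moment} inside the defining sum yields
\[
(n)_m\,\EE_\lambda\!\left[T^f_{(U,V),C}\right] = f_{(\mu,\nu)}(n,m_1(\lambda),\dots,m_k(\lambda))\cdot g(n),
\]
where $g(n) := \sum_{L\in \binom{\NN}{m}_C,\,L\subseteq[n]} f(\ell_1,\dots,\ell_m)$. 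A change of variables to the $m+1$ gaps $\ell_1-1,\ell_2-\ell_1-1,\dots,n-\ell_m$ — of which the $q$ prescribed by $C$ are forced to $0$ while the others are nonnegative integers summing to $n-m$ — exhibits $g(n)$ as a polynomial in $n$ of degree at most $\deg f + (m-q)$.

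The only nonroutine step is the divisibility $(n-q)_{m-q}\mid g(n)$, which I would establish by vanishing: when $n\in\{q,q+1,\dots,m-1\}$ the index set is empty, so $g(n)=0$ at these $m-q$ integer points, forcing $g(n) = (n-q)_{m-q}\,h(n)$ with $h\in\QQ[n]$ of degree $\leq \deg f$. Combined with $(n)_m = (n)_q (n-q)_{m-q}$, this gives
\[
(n)_q\, \EE_\lambda\!\left[T^f_{(U,V),C}\right] = f_{(\mu,\nu)}(n,m_1(\lambda),\dots,m_k(\lambda))\cdot h(n),
\]
a polynomial of weighted degree $\leq k + \deg f = p+q$ in $\ZZ[n,m_1,\dots,m_k]$. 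Multiplying each summand by the missing factor $(n-q_j)_{dq-q_j} = (n)_{dq}/(n)_{q_j}$ adds weighted degree $dq - q_j$, so term $j$ contributes weighted degree $\leq dq + p_j \leq dq + dp$; summing over $j$ produces the claimed $f_{\Psi,d}\in\ZZ[n,m_1,\dots,m_{dk}]$. The content is concentrated in the vanishing/divisibility of $g(n)$, with everything else just careful tracking of the weighted grading through the subadditivity built into regularity.
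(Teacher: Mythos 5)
Your proof follows the paper's route essentially exactly: reduce $\Psi^d$ to a sum of constrained translates via the product-closure/subadditivity property (the paper's Proposition~\ref{p:regular-algebra}, which you cite rather than prove and which is the one nontrivial ingredient you leave unverified), apply Theorem~\ref{t:moment} termwise using the common cycle--path type of all indicators in a translate, and absorb the weight sum $g(n)$ into a polynomial divisible by the appropriate falling factorial before the final degree bookkeeping. The only real difference is cosmetic: you establish $(n-q)_{m-q}\mid g(n)$ by a gap substitution and vanishing at the $m-q$ points $n=q,\dots,m-1$, whereas the paper's Lemma~\ref{l:vincular-poly} obtains the same factorization $\overline{f}_C(n)\binom{n-q}{k-q}$ via a first-difference recursion plus a shift bijection; both hinge on the same vanishing phenomenon (and your version tacitly requires that the polynomial agree with the empty combinatorial sum on that range of $n$, which the gap substitution does deliver but deserves a sentence).
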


There are many previous results on moments of pattern counting statistics in the literature, beginning with Zeilberger's result that all moments for the random variable counting subwords of size $k$ in a uniformly random permutation with a given relative order is a polynomial in the size of the permutation~\cite{Zeilberger}.
Zeilberger's result has been generalized both to more general statistics~\cite{DK} and allowing for specific~\cite{KLY} or arbitrary cycle type~\cite{Hultman,GR,GP}.
Theorem~\ref{t:regular-moment} generalizes all previous results on this topic we are aware of.

We next apply Theorem~\ref{t:regular-moment} to understand the limiting behavior of regular statistics.
Let $\VV_\lambda$ denote the variance with respect to the uniform distribution on $K_\lambda$.
We characterize the asymptotic expectation and variance of a regular statistic:
\begin{theorem} \label{t:regular-asymptotics}
 {\em ($\subset$ Theorems~\ref{expectation-fixed-point} and~\ref{regular-variance})}
	Let $\Psi$ be a regular statistic with power $p$ and $\{\lambda^{(n)}\}$ be a sequence of partitions of $n$
	so that for $\alpha,\beta \in [0,1]$ we have
\begin{equation}
    \label{eq:limits}
		\lim_{n \to \infty} \frac{m_1(\lambda^{(n)})}{n} = \alpha
		\quad \mbox{and} \quad
		\lim_{n \to \infty} \frac{m_2(\lambda^{(n)})}{n} = \beta.
\end{equation}
Then there are polynomials $f,g,h \in \RR[x]$ so that for any partition $\lambda$
\begin{equation}\label{eq:intro-asymptotics}
	\lim_{n \to \infty} \EE_\lambda\left[ \frac{\Psi(X_n)}{n^p}\right] = f(\alpha), \quad \  
	\lim_{n \to \infty} \VV_\lambda\left[\frac{\Psi(X_n)}{n^{2p-1}}\right]= g(\alpha) + \beta h(\alpha).
\end{equation}
\end{theorem}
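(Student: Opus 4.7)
The plan is to apply Theorem~\ref{t:regular-moment} with $d=1$ to control the expectation and with $d=2$ to control the second moment, then extract the asymptotic behavior from the weighted-degree structure of the polynomials $f_{\Psi,d}$. For the expectation, Theorem~\ref{t:regular-moment} gives $(n)_q \EE_\lambda[\Psi] = f_{\Psi,1}(n,m_1,\dots,m_k)$ where $f_{\Psi,1}$ has weighted degree at most $p+q$ (with $\deg n = 1$, $\deg m_i = i$). The key observation is that a monomial $c\cdot n^{a_0}m_1^{a_1}\cdots m_k^{a_k}$ has actual $n$-growth of order $a_0 + \sum a_i$ (since $m_i \leq n/i$), while its weighted degree $a_0 + \sum i a_i$ exceeds this by $\sum(i-1)a_i \geq 0$. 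Therefore monomials of maximal actual degree $p+q$ must satisfy $a_i = 0$ for $i \geq 2$; the top $n^{p+q}$ contribution to $f_{\Psi,1}$ involves only $n$ and $m_1$, so dividing by $(n)_q n^p \sim n^{p+q}$ and letting $m_1/n \to \alpha$ gives $\EE_\lambda[\Psi]/n^p \to f(\alpha)$ for some $f \in \RR[x]$.

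For the variance, applying the same degree analysis to $(n)_{2q}\EE_\lambda[\Psi^2] = f_{\Psi,2}(n,m_1,\dots,m_{2k})$ of weighted degree $\leq 2p+2q$ shows that the $n^{2p}$ part of $\EE_\lambda[\Psi^2]$ depends only on $\alpha$, while its $n^{2p-1}$ part picks up only monomials with $\sum(i-1)a_i = 1$, i.e., those involving at most a single factor of $m_2$ and no $m_i$ for $i \geq 3$ (since each such $m_i$ reduces actual degree by $i-1 \geq 2$). An identical decomposition applies to $(\EE_\lambda[\Psi])^2 = f_{\Psi,1}^2/(n)_q^2$. Provided the $n^{2p}$ leading terms cancel in the difference, the remaining $n^{2p-1}$ contribution has precisely the form $g(\alpha) + \beta h(\alpha)$ with $g,h \in \RR[x]$, and dividing by $n^{2p-1}$ yields the stated limit.

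The main obstacle is verifying the cancellation at order $n^{2p}$. I propose the following combinatorial check: expand $\Psi = \sum_L c_L\,1_{L(U)L(V)}$ (as a sum over the index sets $L$ arising in the constrained translates comprising $\Psi$) and write
\[
\VV_\lambda[\Psi] = \sum_{L,L'} c_L c_{L'}\bigl(\EE_\lambda[1_L 1_{L'}] - \EE_\lambda[1_L]\EE_\lambda[1_{L'}]\bigr).
\]
When $L \cap L' = \emptyset$, the product $1_L 1_{L'}$ is itself the indicator of a partial permutation of size $2k$ with cycle--path type $(\mu\sqcup\mu',\nu\sqcup\nu')$; comparing the Theorem~\ref{t:moment} formulas shows $f_{(\mu\sqcup\mu',\nu\sqcup\nu')}$ agrees with $f_{(\mu,\nu)}f_{(\mu',\nu')}$ at top weighted degree (which lives in $n$ and $m_1$ alone), and $(n)_m^2/(n)_{2m} = 1 + O(1/n)$, so each such summand is of relative size $O(1/n)$. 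Overlapping pairs $L \cap L' \neq \emptyset$ form a subfamily of cardinality smaller by a factor of $n$, hence also contribute $O(n^{2p-1})$ in total. Combining, $\VV_\lambda[\Psi] = O(n^{2p-1})$, and reading off the $\alpha,\beta$-dependence of the leading coefficient via the weighted-degree analysis produces the claimed form $g(\alpha) + \beta h(\alpha)$.
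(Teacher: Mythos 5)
Your overall strategy is the one the paper uses: the expectation statement is exactly the weighted-degree analysis of Theorem~\ref{regular-class-asymptotics} and Corollary~\ref{expectation-fixed-point} (monomials $n^{a_0}m_1^{a_1}\cdots m_k^{a_k}$ of weighted degree $d$ have actual $n$-growth $d-\sum_{i\geq 2}(i-1)a_i$, so only $n$ and $m_1$ survive at top order, and a single $m_2$ survives at order $n^{2p-1}$), and the variance statement reduces, as in Theorem~\ref{regular-variance}, to showing that the coefficient of the pure $n^a m_1^b$ monomials of full weighted degree $2p+2q$ cancels between $\EE_\lambda[\Psi^2]$ and $\EE_\lambda[\Psi]^2$. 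Your cancellation argument for disjoint pairs is sound and is essentially Lemma~\ref{translate-product} carried out at the level of individual indicators rather than constrained translates: the top weighted-degree part of the Theorem~\ref{t:moment} polynomial is the top part of $c_{(\mu,\nu)}$ from~\eqref{eq:good-count} (the M\"obius correction terms have strictly smaller weighted degree by Proposition~\ref{p:poset-to-partition}), and $c$ is multiplicative over components, so $f_{(\mu\sqcup\mu',\nu\sqcup\nu')}$ and $f_{(\mu,\nu)}f_{(\mu',\nu')}$ agree there. One factual correction: that top weighted-degree part does \emph{not} live in $n$ and $m_1$ alone (a $2$-cycle contributes $2m_2$, which has full weighted degree); what is true, and all you need, is that only the $n,m_1$ monomials attain full \emph{actual} $n$-growth, which is the content of Corollary~\ref{c:top-degree-moment}.

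The genuine soft spot is your one-line dismissal of the overlapping pairs. ``Cardinality smaller by a factor of $n$, hence total contribution smaller by a factor of $n$'' is not a valid inference here, because the summands change: when $L\cap L'\neq\varnothing$ the product $1_L 1_{L'}$ (if nonzero) is an indicator supported on the smaller set $L\cup L'$, so $\EE_\lambda[1_L 1_{L'}]$ is \emph{larger} than in the disjoint case, not comparable to it. (The extreme case $L=L'$ makes this vivid: the pair count drops by a factor of $n^{m-|C|}$ but each covariance is of order $\EE_\lambda[1_L]$ rather than $\EE_\lambda[1_L]^2$.) The totals do come out to $O(n^{2p-1})$, but to see this you must track that identifying $j\geq 1$ vertices of $G(L)$ with vertices of $G(L')$ merges at least one pair of connected components, and the actual $n$-growth of $c_{(\mu'',\nu'')}$ is bounded by the number of components of the merged graph; this loss of a component is what compensates for the gain of $n^{j}$ in the individual expectations against the loss of $n^{j}$ in the pair count. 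This is precisely the accounting done (also tersely) in the final paragraph of the proof of Lemma~\ref{translate-product}, and your write-up needs an explicit version of it before the variance bound is established.
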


Note the scaling factors in~\eqref{eq:intro-asymptotics} can dominate the numerator, for instance when $\Psi = m_2$, the number of two cycles.
However, for many regular statistics in the literature this is the correct scaling factor and the limits are non-degenerate.
In this case, we use the variance result to prove a law of large numbers for regular statistics based on the limiting proportion of fixed points (see Corollary~\ref{regular-weak-law}).
Additionally, we can show random permutations drawn from a sequence of cycle types have a permuton limit based on the limiting proportion of fixed point (see Theorem~\ref{alpha-permuton}).

Since indicators $1_{IJ}$ and $1_{KL}$ are independent when $I\cup J$ and $K \cup L$ are disjoint, most terms in the na\"ive expansion of a regular statistic into $1_{IJ}$'s are independent random variables.
This suggests many regular statistics will be asymptotically normal. 
Hofer demonstrates this behavior for vincular pattern counts~\cite{Hofer}, generalizing B\'ona's result for classical pattern counts~\cite{Bona}.
When conditioning on arbitrary cycle type, the only statistics for which asymptotic normality was known prior to our work were the descent~\cite{KL1} and peak~\cite{FKL} statistics.
We will show how to extend Fulman's result for all long cycles from descents~\cite{Fulman} to all vincular statistics.
However, the state of the art has moved much further.
Using Theorem~\ref{t:regular-moment} and weighted dependency graphs, F\'eray and Kammoun have extended B\'ona's result and part of Hofer's to sequences cycle type of with limits as in~\eqref{eq:limits}~\cite{FK}.
See also~\cite{Dubach} for a less general asymptotic normality result independent of our work.

\subsection*{Organization} In Section~\ref{sec:Background}, we introduce key features of permutations and partial permutations.
We then prove Theorem~\ref{t:moment} in Section~\ref{sec:Moments}.
Next, in Section~\ref{sec:Regular} we prove Theorem~\ref{t:regular-moment} and discusses its relation to prior work.
Our asymptotic results including Theorem~\ref{t:regular-asymptotics} are in Section~\ref{sec:Asymptotic}.
We conclude with final remarks in Section~\ref{sec:Conclusion}.

\subsection*{Acknowledgements}
The authors are grateful to Michael Coopman, Valentin F\'eray, Yeor Hafouta, Mohammed Slim Kammoun, Gene Kim, Toby Johnson, Kevin Liu, 
Moxuan Liu,
 Arnaud Marsiglietti, 
James Pascoe, 
Bruce Sagan, John Stembridge,
and Yan Zhuang for helpful conversations.
We are especially grateful to Eric Ramos, who helped us conceptualize the project at its inception, and to the anonymous referee who dramatically simplified our proof of Theorem~\ref{t:moment}.
Z. Hamaker was partially supported by NSF Grant DMS-2054423.
B. Rhoades was partially supported by NSF Grant DMS-1953781 and DMS-2246846.

\section{Background}
\label{sec:Background}

Let $[n] = \{1,2,\dots,n\}$.
For $S$ a finite set, let $\binom{S}{k}$ be the set of $k$--element subsets of $S$.

\subsection{Partial Permutations}

We repeat several key definitions from the introduction.
For $S$ a finite set, let $\symm_S$ be the permutations of $S$.
In particular, $\symm_{[n]}$ is the usual symmetric group $\symm_n$.
A \emph{partial permutation} of size $k$ in $\symm_n$ is a bijection from $S$ to $T$ where $S, T \in \binom{[n]}{k}$.
We represent a partial permutation as a pair $(I,J)$ of tuples $I = (i_1,\dots,i_k) \in \symm_S$ and $J = (j_1,\dots,j_k) \in \symm_T$ where $i_1 \mapsto j_1,\dots, i_k \mapsto j_k$.
Let $\symm_{n,k}$ be the set of all partial permutations of size $k$ in $\symm_n$.
There are eighteen partial permutations in $\symm_{3,2}$; the six with $I = (12)$ are:
\begin{align*}
(12,12), (12, 13), (12,21), (12,23), (12,31), (12, 32).
\end{align*}
Note partial permutations $(I,J) \in \symm_{n,k}$ are unchanged when permuting the entries of $I$ and $J$ simultaneously, so we can always write $I$ in increasing order.

Each $(I,J) \in \symm_{n,k}$ with $I = (i_1, \dots, i_k)$ and $J = (j_1, \dots, j_k)$ has an associated  {\em graph} $G(I,J)$ whose vertex set is $I \cup J$ with edges $i_1 \rightarrow j_1, \dots, i_k \rightarrow j_k$.
This graph is an extension of the disjoint cycle notation for a permutation in $\symm_n$.
Every connected component of $G(I,J)$ is a directed path or a directed cycle.
The {\em cycle partition} $\mu$ and {\em path partition} $\nu$ of $(I,J)$ are the partitions whose parts are the cycle and path lengths in $G(I,J)$, respectively.
Here path lengths count edges, so $u_0 \to u_1 \to \dots \to u_\ell$ has length $\ell$.
The pair $(\mu,\nu)$ is the \emph{cycle-path type} of $(I,J) \in \symm_{n,k}$, which we denote $\mathrm{cyc}(I,J)$ and generalizes the cycle type of a permutation.
See Figure~\ref{fig:graph} for an example of these concepts.

%\Brendon{Maybe mention that elements of $[n]$ not appearing in $I$ or $J$ do not appear in $G(I,J)$? This is also different from the convention in the other paper.}
%\Zach{Added more about this in the remark.}

\begin{remark}
    In our companion paper~\cite{HR2}, we take the convention that $G(I,J)$ has vertex set $n$, with vertices not in $I$ or $J$ isolated.
    Additionally, in that work we say a path's size is the number of vertices.
    These conventions are preferable for stating the symmetric function results in that paper, but would complicate the results in this paper.
\end{remark}

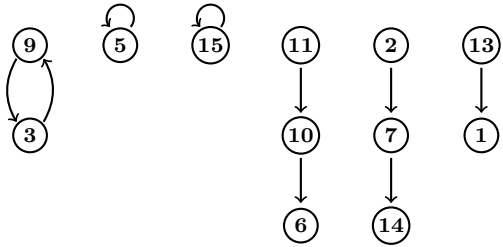
\begin{figure}
 \begin{center}
 \begin{tikzpicture}[scale = 0.6]
 
 \coordinate (v11) at (6,0);
 
 \coordinate (v10) at (6,-2);
 
 \coordinate (v6) at (6,-4);

 \coordinate (v2) at (8,0);
 
 \coordinate (v7) at (8,-2);
 
\coordinate (v14) at (8,-4);
  
 \coordinate (v13) at (10,0);
   
 \coordinate (v1) at (10,-2);
    
 \coordinate (v8) at (14,0);
 
 \coordinate (v4) at (12,0);

 \coordinate (v9) at (0,0);
 
 \coordinate (v3) at (0,-2);
 
 \coordinate (v5) at (2,0);
 
 \coordinate (v15) at (4,0);

   \node [draw, circle, fill = white, inner sep = 2pt, thick] at (v1)
  {\scriptsize {\bf 1} };
     \node [draw, circle, fill = white, inner sep = 2pt, thick] at (v2)
  {\scriptsize {\bf 2}};
     \node [draw, circle, fill = white, inner sep = 2pt, thick] at (v3)
  {\scriptsize {\bf 3}};
     \node [draw, circle, fill = white, inner sep = 2pt, thick] at (v5)
  {\scriptsize {\bf 5}};
     \node [draw, circle, fill = white, inner sep = 2pt, thick] at (v6)
  {\scriptsize {\bf 6}};
     \node [draw, circle, fill = white, inner sep = 2pt, thick] at (v7)
  {\scriptsize {\bf 7}};
     \node [draw, circle, fill = white, inner sep = 2pt, thick] at (v9)
  {\scriptsize {\bf 9}};
     \node [draw, circle, fill = white, inner sep = 1pt, thick] at (v10)
  {\scriptsize {\bf 10}};
     \node [draw, circle, fill = white, inner sep = 1pt, thick] at (v11)
  {\scriptsize {\bf 11}};
     \node [draw, circle, fill = white, inner sep = 1pt, thick] at (v13)
  {\scriptsize {\bf 13}};
     \node [draw, circle, fill = white, inner sep = 1pt, thick] at (v14)
  {\scriptsize {\bf 14}};
     \node [draw, circle, fill = white, inner sep = 1pt, thick] at (v15)
  {\scriptsize {\bf 15}};
  
  \draw [->, thick] (6,-0.5) -- (6,-1.5);
  \draw [->, thick] (6,-2.5) -- (6,-3.5);
 \draw [->, thick] (8,-0.5) -- (8,-1.5);
 \draw [->, thick] (8,-2.5) -- (8,-3.5);
 \draw [->, thick] (10,-0.5) -- (10,-1.5);
 \draw [->, thick]  (-0.3,-0.3) to[bend right]  (-0.3,-1.7);
 \draw [->, thick]  (0.3,-1.7) to[bend right]  (0.3,-0.3);
 
 \draw[->, thick]  ($(2,0.6) + (-40:3mm)$) arc (-40:220:3mm);

 \draw[->, thick]  ($(4,0.6) + (-40:3mm)$) arc (-40:220:3mm);
 
 \end{tikzpicture} 
 \end{center}
 \caption{For $I = (2,3,5,7,9,10,11,13,15), J = (7,9,5,14,3,6,10,1,15) \in \symm_{15,9}$ we depict the graph $G(I,J)$.
 The cycle and path partition are $(2,1,1)$ and $(2,2,1)$.}
 \label{fig:graph}
 \end{figure}

The symmetric group $\symm_n$ acts naturally on length $k$ lists of elements of $[n]$ by the rule 
\begin{equation}
w(I) = w((i_1, \dots, i_k)) := (w(i_1), \dots, w(i_k))
\end{equation}
where $I = (i_1, \dots, i_k)$.
This induces an action of $\symm_n$ on $\symm_{n,k}$ by the diagonal rule 
\begin{equation}
w(I,J) := (w(I), w(J)).
\end{equation}
The orbits of this action are parametrized by cycle-path type.
In the introduction, we define $1_{IJ}$ by
\[
1_{IJ}(w) = \begin{cases}
    1 & (i_\ell) = j_\ell \text{ for } \ell \in [k],\\
    0 & \text{else}
\end{cases}
\]
for $w \in \symm_n$.
We remark that for $u$ also in $\symm_n$ that
\begin{equation}
    \label{eq:1IJ-prod}
    1_{IJ}(u^{-1}wu) = 1_{u(I)u(J)}(w).
\end{equation}

\begin{proposition}
\label{p:cycle-path}
	Partial permutations $(I,J), (I',J') \in \symm_{n,k}$
	have the same cycle-path type if and only if there is a permutation $w \in \symm_n$ so that $I' = w(I)$ and $J' = w(J)$.
\end{proposition}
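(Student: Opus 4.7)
The plan is to prove the two directions by leveraging the graph structure introduced above. The forward implication is nearly immediate: if $I' = w(I)$ and $J' = w(J)$ for some $w \in \symm_n$, then the restriction of $w$ to $I \cup J$ is a vertex-bijection to $I' \cup J'$ sending each directed edge $i_\ell \to j_\ell$ of $G(I,J)$ to the edge $w(i_\ell) \to w(j_\ell) = i_\ell' \to j_\ell'$ of $G(I',J')$. Hence $w$ is an isomorphism of directed graphs, and since isomorphism preserves the multiset of connected components, the cycle and path partitions agree.

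For the reverse implication, suppose $(I,J)$ and $(I',J')$ share cycle-path type $(\mu,\nu)$. I would fix a bijection between the connected components of $G(I,J)$ and those of $G(I',J')$ pairing each cycle (resp.\ path) with one of the same length, which exists by assumption. For each paired cycle $a_0 \to a_1 \to \cdots \to a_{\ell-1} \to a_0$ and $a_0' \to a_1' \to \cdots \to a_{\ell-1}' \to a_0'$, choose the starting vertex $a_0$ arbitrarily and set $w(a_i) := a_i'$. For each paired path $b_0 \to b_1 \to \cdots \to b_\ell$ and $b_0' \to b_1' \to \cdots \to b_\ell'$, set $w(b_i) := b_i'$. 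This defines a bijection $w \colon I \cup J \to I' \cup J'$. Since $|I \cup J| = \sum_i \mu_i + \sum_j (\nu_j + 1) = |I' \cup J'|$, I can extend $w$ to a bijection $[n] \to [n]$ by any bijection between the complements, obtaining a genuine permutation in $\symm_n$.

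By traversing each paired component in parallel, $w$ sends each edge of $G(I,J)$ to an edge of $G(I',J')$, so the edge sets correspond bijectively under $w$. Because a partial permutation is determined by its edge set up to simultaneous permutation of the entries of its two tuples (as noted in the excerpt), this gives $w(I) = I'$ and $w(J) = J'$ as partial permutations. The only step requiring care is the construction on each component: one must verify that defining $w$ by "parallel traversal" actually carries edges to edges rather than introducing a cyclic shift or reversal, but this is guaranteed by fixing starting vertices and following the unique out-edge from each non-terminal vertex. No real obstacle arises; the argument reduces the proposition to the elementary fact that two directed graphs consisting of disjoint cycles and paths are isomorphic if and only if they have the same multiset of cycle and path lengths.
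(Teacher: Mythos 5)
Your proof is correct and follows essentially the same route as the paper: both directions reduce the statement to the fact that $(I,J)$ and $(I',J')$ have the same cycle-path type precisely when $G(I,J)$ and $G(I',J')$ are isomorphic as directed graphs, with the isomorphism then extended to a permutation of $[n]$. You simply spell out the component-pairing construction that the paper leaves implicit.
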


\begin{proof}
For $w \in \symm_n$ so that $I' = w(I)$ and $J' = w(J)$, note that $w$ restricts to a graph isomorphism from $G(I,J)$ to $G(I,J')$.
Since $(I,J)$ and $(I',J')$ have the same cycle-path type if and only if $G(I,J)$ and $G(I',J')$ are isomorphic as graphs, the converse follows.
For the forwards direction, take a graph isomorphism and extend it to a permutation in $\symm_n$.
\end{proof}

In the statement of Proposition~\ref{p:cycle-path}, note that the partial permutations $(I,J)$ and $(I',J')$ of $[n]$ do not a priori have the same size.
In particular, we see that
cycle-path type determines size.
In fact, for $(I,J)$ a partial permutation of size $k$ with cycle-path type $(\nu,\mu)$ we have $k = |\mu| + |\nu|$.

\subsection{Probability and Permutation Statistics}
We review some basics of probability, especially as they apply to permutation statistics.
For $\Omega$ a finite set, a \emph{probability measure} $\mu$ is a function $\mu:\Omega \to [0,1]$ so that $\sum_{s \in \Omega} \mu(s) = 1$.
An \emph{event} $A \subseteq \Omega$ has \emph{probability} $\PP[A] = \sum_{a \in A} \mu(a)$.
A \emph{random variable} is a function $X:\Omega \to \mathbb{R}$, and its \emph{expected value} is
\[
\EE[X] = \sum_{s \in \Omega} X(s) \mu(s).
\]
Similarly, the \emph{variance} of $X$ is $\VV[X] = \EE[X^2] - \EE[X]^2$ and the \emph{covariance} of two random variables $X$ and $Y$ is $\mathrm{Cov}[X,Y] = \EE[XY]-\EE[X]\EE[Y]$.

In this paper, we are primarily interested probabilistic aspects of permutation statistics, which are functions $f: \sqcup_{n \geq 1} \symm_n \to \mathbb{R}$.
Most of our results are for the uniform measure on $K_\lambda$, which is the set of permutations with cycle type $\lambda$ (or cycle-path type $(\lambda,\varnothing)$).
Here, the expected value and variance are
\[
\EE_\lambda[f] = \frac{1}{|K_\lambda|}\sum_{w \in K_\lambda} f(w) \quad \mbox{and} \quad \VV_\lambda[f] =  \EE_\lambda[f^2 - \EE_\lambda[f]^2].
\]
Alternatively, for $v \in K_\lambda$, we have
\[
\EE_\lambda[f] = \frac{1}{n!} \sum_{w \in \symm_n} f(w^{-1} v w).
\]
This alternate formula proves incredibly valuable.
Define $R_n\,f:\symm_n \to \RR$ by
\begin{equation}
    \label{eq:reynolds}
R_n\,f(v) = \frac{1}{n!} \sum_{w \in \symm_n} f(w^{-1} v w) = \EE_{\mathrm{cyc}(v)}[f],
\end{equation}
where $\mathrm{cyc}(v)$ is the cycle type of $v$.
Here $R_n$ is the \emph{Reynolds operator} (or {\em averaging operator}) which plays a prominent role in  representation theory.
Equation~\eqref{eq:reynolds} shows that the function $R_n\,f: \symm_n \to \CC$ encodes all of the expectations $\EE_\lambda[f]$ simultaneously.

% \Brendon{This last sentence should be made more precise. Maybe something like `Equation~\eqref{eq:reynolds} shows that the function $R_n\,f: \symm_n \to \CC$ encodes all of the expectations $\EE_\lambda[f]$ simultaneously.'}

\section{Expectation for Partial Permutations by Cycle Type}
\label{sec:Moments}

The purpose of this section is to prove Theorem~\ref{t:moment}.
For the remainder of the section, fix a partial permutation $(I,J)$ of size $k$ with $|I\cup J| = m$ and cycle--path type $(\mu,\nu)$.
Further, we can assume $(I,J)$ is packed, so $I\cup J = [m]$.
We must construct a polynomial $f_{(\mu,\nu)}(n,m_1,\dots,m_k)$ so that for each $\lambda \vdash n$
\[
(n)_m \cdot \EE_{\lambda}[1_{IJ}] = f_{(\mu,\nu)}(n,m_1(\lambda),m_2(\lambda),\dots,m_k(\lambda)
\]
(this is~\eqref{eq:moment}).
We first reinterpret~\eqref{eq:moment} combinatorially.
If $X$ is a uniformly random member of $K_\lambda$, $\pi \in K_\lambda$ and $Y$ a uniformly random permutation,
\[
\EE_\lambda[1_{IJ}] = \EE [1_{IJ}(X)] = \mathbb{P}[1_{IJ}(X) = 1] = \mathbb{P}[1_{IJ}(Y \pi Y^{-1}) = 1] = \mathbb{P}[1_{Y(I)Y(J)}(\pi) = 1].
\]
In particular, we can compute this probability using the restriction of $Y$ to $I \cup J$, which we view as an injection.
Towards this end, for the remainder of this section fix  $\pi \in K_\lambda$.
We say a function $\psi:[m] \to [n]$ (not necessarily injective) is \emph{compatible} with $(I,J)$ relative to $\pi \in \symm_n$ if
\begin{equation}
\label{eq:condition}
	\quad \pi(\psi(i_k)) = \psi(j_k)
\end{equation}
for all $k \in [m]$.
Then
\begin{equation}
(n)_m \EE_\lambda [1_{IJ}] = \#\{ \text{injections } \phi:[m] \hookrightarrow [n] \mid \phi\ \text{is compatible with}\ (I,J)\ \text{relative to} \ \pi \}.	
\label{eq:reformulation}
\end{equation}

We analyze \eqref{eq:reformulation} using M\"obius inversion, and to this end we define an auxiliary set of functions. For a fixed packed partial permutation $(I,J) \in \symm_{k,n}$ with $|I \cup J| = m$ and a fixed $\pi \in K_\lambda$, define
\begin{equation}
    \mathcal{C}_{IJ}(\lambda) := \{ \text{functions } \psi:[m] \to[n] \mid \psi \text{ is compatible with $(I,J)$ relative to $\pi$} \}.
\end{equation}
Functions  $\psi \in \mathcal{C}_{IJ}(\lambda)$ are not necessarily injections.
 Observe that the cardinality of $\mathcal{C}_{IJ}(\lambda)$ does not depend on the choice of $\pi \in K_\lambda$. We also have $|C_{IJ}(\lambda)| = |C_{I'J'}(\lambda)|$ whenever $(I,J)$ and $(I',J')$ have the same cycle-path types. We may therefore define
 $c_{(\mu,\nu)}(\lambda) := |\mathcal{C}_{IJ}(\lambda)|$ where $(I,J)$ has cycle-path type $(\mu,\nu)$.

\begin{proposition}
	\label{p:function-count}
	For $\lambda,\mu,\nu$ partitions with $|\mu| + |\nu| = k$ and $|\lambda| = n$,
	\[
	c_{(\mu,\nu)}(\lambda ) \in \ZZ[n,m_1(\lambda),\dots,m_n(\lambda)]
	\]
has degree $k$ where $\deg n = 1$ and $\deg m_i = i$.
\end{proposition}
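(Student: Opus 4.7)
The plan is to derive a closed-form product expression for $c_{(\mu,\nu)}(\lambda)$ by exploiting the fact that the compatibility relation $\pi(\psi(i_\ell)) = \psi(j_\ell)$ involves only the two endpoints of a single edge of $G(I,J)$. Consequently the constraints cutting out $\mathcal{C}_{IJ}(\lambda)$ decouple across the connected components of $G(I,J)$, so I would begin by writing $c_{(\mu,\nu)}(\lambda)$ as a product over components of the number of compatible functions on each component. It then suffices to analyze a single directed path and a single directed cycle.

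For a path $v_0 \to v_1 \to \cdots \to v_\ell$ the relations $\psi(v_{i+1}) = \pi(\psi(v_i))$ propagate starting from $v_0$, so $\psi(v_0) \in [n]$ may be chosen arbitrarily and determines the rest, contributing a factor of $n$. For a cycle $v_0 \to v_1 \to \cdots \to v_{\ell-1} \to v_0$ of length $\ell$ the same propagation applies but with the closure condition $\pi^\ell(\psi(v_0)) = \psi(v_0)$; the solutions are therefore the fixed points of $\pi^\ell$, i.e.\ the union of $\pi$-cycles whose length divides $\ell$, contributing a factor of $\sum_{d \mid \ell} d \cdot m_d(\lambda)$. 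Writing $\ell(\nu)$ for the number of parts of $\nu$, multiplying across components yields
\[
c_{(\mu,\nu)}(\lambda) \;=\; n^{\ell(\nu)} \prod_{\ell \geq 1} \Bigl( \sum_{d \mid \ell} d \cdot m_d(\lambda) \Bigr)^{m_\ell(\mu)}.
\]

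To finish, I would verify the polynomial claim directly from this formula. The right-hand side is manifestly in $\ZZ[n, m_1(\lambda), \dots, m_k(\lambda)]$ because only divisors $d \leq \ell \leq k$ of cycle lengths appear. With $\deg n = 1$ and $\deg m_i = i$, the factor $n^{\ell(\nu)}$ has degree $\ell(\nu)$; the inner sum $\sum_{d \mid \ell} d \cdot m_d$ has degree $\ell$ (attained by the term $\ell \cdot m_\ell$); raising to $m_\ell(\mu)$ and multiplying over $\ell$ yields total degree $\sum_\ell \ell \cdot m_\ell(\mu) = |\mu|$; altogether the degree is bounded by $\ell(\nu) + |\mu| \leq |\nu| + |\mu| = k$, as claimed. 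There is no real obstacle in this approach: the entire content is the locality observation that permits the product decomposition, after which the identification of fixed points of $\pi^\ell$ is standard. The M\"obius-inversion machinery advertised just before the proposition is not needed for Proposition~\ref{p:function-count} itself and will instead enter when passing from $c_{(\mu,\nu)}(\lambda)$ to the injective count $(n)_m \EE_\lambda[1_{IJ}]$ in the proof of Theorem~\ref{t:moment}.
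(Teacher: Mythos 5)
Your strategy is the same as the paper's: exploit the locality of the compatibility condition to factor $c_{(\mu,\nu)}(\lambda)$ over the connected components of $G(I,J)$, write down an explicit product formula, and read off the degree. But your component contributions differ from the paper's equation~\eqref{eq:good-count}, and the difference is substantive. You count all compatible functions, which is what $\mathcal{C}_{IJ}(\lambda)$ is literally defined to be; this gives $n$ per path and $\sum_{d \mid \ell} d\, m_d(\lambda)$ per $\ell$-cycle. The paper's formula~\eqref{eq:good-count} instead assigns $n - \sum_{i=1}^{\ell} i\, m_i(\lambda)$ to a length-$\ell$ path and $\ell\, m_\ell(\lambda)$ to an $\ell$-cycle, i.e.\ it counts compatible functions that are injective on each component: a $2$-cycle of $G(I,J)$ may collapse onto a fixed point of $\pi$, contributing $m_1(\lambda) + 2m_2(\lambda)$ under your reading but only $2m_2(\lambda)$ under the paper's. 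Both conventions support a valid M\"obius inversion and both yield the degree bound at most $k$, so your proof is correct as a computation of the literal $|\mathcal{C}_{IJ}(\lambda)|$. Be aware, though, that the rest of Section~\ref{sec:Moments} is keyed to the paper's convention: the claim in Proposition~\ref{p:poset-to-partition} that $f_{(\mu,\nu)}(\rho)$ vanishes whenever two identified elements lie in the same path or cycle fails for your $\mathcal{C}_{IJ}(\lambda)$ (the collapsed $2$-cycle is a counterexample), and Corollary~\ref{c:top-degree-moment} extracts top-degree monomials from the specific shape of~\eqref{eq:good-count}. Note also that your formula has degree equal to the number of parts of $\nu$ plus $|\mu|$, which is strictly less than $k$ when $\nu$ has a part of size greater than one, whereas~\eqref{eq:good-count} has degree exactly $k$; both satisfy the bound the proposition actually requires, but you would need to carry your convention consistently through the remainder of the argument.
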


\begin{proof}
We establish an explicit formula for $c_{(\mu,\nu)}(\lambda)$ which satisfies the required properties.
For $\psi \in \mathcal{C}_{IJ}(\lambda)$, we see each $\ell$-cycle  $(s_1,\dots,s_\ell,s_1)$ in $(I,J)$ must be mapped under $\psi$ to an $\ell$-cycle of $\pi$ and each length $\ell$ path $(p_1,\dots,p_{\ell+1})$ in $(I,J)$ must be mapped under $\psi$ to part of a cycle of $\pi$ with length at least $\ell+1$.
For $k$ in a such a cycle, there is one way to map $p_1$ to $k$, after which $p_2,\dots,p_{\ell+1}$ are determined.
Therefore, the number of ways to map the path $(p_1, \dots, p_{\ell+1})$ into $[n]$ is 
\begin{equation}
\label{eq:path}
	n - \sum_{i=1}^{\ell} i \cdot m_i(\lambda).
\end{equation}
For $(I,J)$ of cycle--path type $(\mu,\nu)$, we then see
\begin{equation}
\label{eq:good-count}
	c_{(\mu,\nu)}(\lambda) = \prod_{i=1}^m (i\cdot m_i(\lambda))^{m_i(\mu)} \cdot \prod_{\ell=1}^m \left(n - \sum_{i=1}^\ell i \cdot m_i(\lambda) \right)^{m_\ell(\nu)}.
\end{equation} 
If we let $\deg n = 1$ and $\deg m_k(\lambda) = k$, the first product has total degree $|\mu|$.
The degree of~\eqref{eq:path} is $\ell$, so the total degree of the second term is $|\nu|$, hence~\eqref{eq:good-count} is a polynomial in the variables $n,m_1(\lambda),m_2(\lambda),\dots,m_m(\lambda)$ whose degree is at most $|\mu| + |\nu|$.
\end{proof}

Now that we have a count for all such functions, we use M\"obius inversion on the set partition lattice to find the number of injections from $[m]$ to $[n]$ that are compatible with $(I,J)$ relative to $\pi$.
Let $(\Pi_m,\leq)$ denote the poset of set partitions of $[m]$ ordered by refinement. For $\rho,\tau \in \Pi_n$ we have 
\[
\rho = \{\rho_1,\dots,\rho_k\} \leq \tau = \{\tau_1,\dots,\tau_m\}
\]
 if for each $i \in [k]$ there is a $j \in [m]$ so that $\rho_i \subseteq \tau_j$.
 The poset $\Pi_m$ is a lattice with minimum element $\hat{0}_m := \{ \{1\},\{2\},\dots,\{m\}\}$.

%\Brendon{Usually the maximum element of a poset is written $\hat{1}$ while the minimum element is written $\hat{0}$. Do you think we should call $1_m$ something like $\hat{0}_m$ to avoid potential confusion?}

For $\rho \in \Pi_m$, define
\[
f_{(\mu,\nu)}(\rho) = \{\psi \in \mathcal{C}_{IJ}(\lambda): \psi(i) = \psi(j) \ \mbox{if}\  i,j\ \mbox{are in the same block} \}
\]
and
\[
g_{(\mu,\nu)}(\rho) = \{\psi \in \mathcal{C}_{IJ}(\lambda): \psi(i) = \psi(j) \ \mbox{if and only if}\  i,j\ \mbox{are in the same block} \},
\]
so $f_{(\mu,\nu)}(\tau) = \sum_{\rho \leq \tau} g_{(\mu,\nu)}(\rho)$.
By definition, $f_{(\mu,\nu)}(1_m) = c_{(\mu,\nu)}(\lambda)$.
We now characterize the values $f_{(\mu,\nu)}(\rho)$ can take.

\begin{proposition}
\label{p:poset-to-partition}
For $\rho \in \Pi_m$, either $f_{(\mu,\nu)}(\rho) = 0$ or  there exist $\mu'$ and $\nu'$ with $m_i(\mu') \leq m_i(\mu)$ for all $i$ and $m_1(\nu') \leq m_1(\nu)$ so that
\[
f_{(\mu,\nu)}(\rho) = c_{(\mu',\nu')}(\lambda).
\]

\end{proposition}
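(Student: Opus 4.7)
The plan is to associate to $\rho$ a coarser set partition $\hat\rho$ under which the edges of $(I,J)$ descend to a genuine partial permutation of $[m]/\hat\rho$, then identify $f_{(\mu,\nu)}(\rho)$ with $c_{(\mu',\nu')}(\lambda)$ for the cycle--path type $(\mu',\nu')$ of this quotient, with vanishing handled in the degenerate case.

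First, I define the closure $\hat\rho$ as the smallest set partition coarser than $\rho$ satisfying $i_k \sim_{\hat\rho} i_{k'}$ if and only if $j_k \sim_{\hat\rho} j_{k'}$, for every pair of edges $k, k'$ of $(I,J)$. It is constructed by iteratively merging any pair of blocks whose identification is forced by this biconditional; finiteness of $m$ guarantees termination. Next, I show $f_{(\mu,\nu)}(\rho) = f_{(\mu,\nu)}(\hat\rho)$. One direction is immediate from $\hat\rho \geq \rho$, and the reverse uses $\pi$-compatibility: if $\psi$ is compatible and constant on $\rho$-blocks, then $\psi(i_k) = \psi(i_{k'})$ yields $\psi(j_k) = \pi(\psi(i_k)) = \pi(\psi(i_{k'})) = \psi(j_{k'})$, and dually through $\pi^{-1}$. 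Inducting along the closure steps confirms that every such $\psi$ is automatically constant on $\hat\rho$-blocks.

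By the defining property of $\hat\rho$, the multiset of edges $[i_k] \to [j_k]$ on $[m]/\hat\rho$ (with duplicates collapsed) forms the graph of a bona fide partial permutation $(I', J')$, since the in-- and out-degrees at every block are at most one. Writing $(\mu',\nu')$ for its cycle--path type, Proposition~\ref{p:function-count} applied directly to $(I', J')$ delivers $f_{(\mu,\nu)}(\hat\rho) = c_{(\mu',\nu')}(\lambda)$.

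The final step is verifying the inequalities $m_i(\mu') \leq m_i(\mu)$ and $m_1(\nu') \leq m_1(\nu)$, or else concluding $f_{(\mu,\nu)}(\rho) = 0$. This requires a careful structural analysis of how $\hat\rho$ reshapes the connected components of $G(I, J)$: a cycle of $(I', J')$ typically arises by aligning and merging a family of equal-length cycles of $(I, J)$, and a length-$1$ path arises from a merger of length-$1$ paths of $(I, J)$ whose sources share a $\hat\rho$-block and whose targets share a $\hat\rho$-block. The degenerate collapses --- for instance, when $\hat\rho$ propagates to identify two distinct vertices within a single original component in a way incompatible with the quotient's cycle structure --- are precisely the configurations for which $f_{(\mu,\nu)}(\rho) = 0$. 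This combinatorial bookkeeping, classifying exactly which merges of components are permissible and which force vanishing, is the main technical obstacle.
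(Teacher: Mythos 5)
Your reduction to the closure $\hat\rho$ is sound, and it is essentially the paper's own argument run all at once rather than one step at a time: the paper repeatedly picks two identified elements $i,j$ lying in distinct components, merges the corresponding components, and records how the cycle--path type changes at each merge. But your proof stops exactly where the content of the proposition begins. The final paragraph --- verifying $m_i(\mu')\le m_i(\mu)$ and $m_1(\nu')\le m_1(\nu)$ and deciding which configurations force $f_{(\mu,\nu)}(\rho)=0$ --- is not deferrable ``bookkeeping''; it is the entire point of the statement, since these inequalities are what feed Corollary~\ref{c:top-degree-moment} and the variance analysis later. The paper supplies precisely this via a three-case analysis (cycle--cycle, cycle--path, path--path identifications), showing in each case either vanishing or that $\mu$ is unchanged and $m_1(\nu)$ does not increase.

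Moreover, the missing step is genuinely delicate, and your sketch of it is not correct as stated. A cycle of the quotient need not arise by ``aligning equal-length cycles'': identifying two vertices of a \emph{single} path or cycle still yields a quotient with in- and out-degrees at most one, hence a legitimate partial permutation. For instance, the path $1\to2\to3$ with $1\sim3$ quotients to a $2$-cycle, and the $3$-cycle with $1\sim2$ closes up to a $1$-cycle; in both cases the naive quotient type violates the claimed inequalities on $m_i(\mu')$. So you must show these configurations contribute $0$, and that does not follow from the closure construction alone. It requires the observation, implicit in the proof of Proposition~\ref{p:function-count}, that the maps being counted are injective on each path and cycle of $(I,J)$ --- an $\ell$-cycle must land on an $\ell$-cycle of $\pi$ and a length-$\ell$ path must land in a cycle of $\pi$ of length at least $\ell+1$ --- so that identifying two vertices of one component kills every such map. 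This is the first thing the paper's case analysis checks, and without it the dichotomy you assert is false.
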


\begin{proof}
When $\rho = \hat{0}_m$, the result holds with $(\mu',\nu') = (\mu,\nu)$.
When $\rho$ contains a non-singleton block, we show either $f_{(\mu,\nu)}(\rho)$ vanishes or construct a partial permutation $(\tilde{I},\tilde{J})$ of cycle--path type $(\tilde{\mu},\tilde{\nu})$ and set partition $\tilde{\rho}$ of a smaller set so that $f_{(\mu,\nu)}(\rho) = f_{(\tilde{\mu},\tilde{\nu})}(\tilde{\rho})$ and $m_1(\tilde{\nu}) \leq m_1(\nu)$.
Iterating this construction if necessary, the result will follow.

Since $\rho \neq \hat{0}_m$, we can pick $i,j$ in the same block in $\rho$.
If $i$ and $j$ are in the same path or cycle, we see any $\psi \in \CCC_{IJ}(\lambda)$ maps $i$ and $j$ to different values so $f_\lambda(\rho)$ must be $0$. We may therefore assume that $i$ and $j$ belong to distinct paths or cycles. Our analysis breaks into cases depending on whether either or both of $i$ and $j$ belong to cycles.

{\bf Case 1:} {\em $i$ and $j$ belong to distinct cycles 
\[
i = c_1 \to \dots \to c_s \to c_1 = i,\quad j = d_1 \to \dots \to d_r \to d_1 = j.
\]
of lengths $s$ and $r$, respectively}

In order for $\psi \in C_{IJ}(\lambda)$ to satisfy $\psi(j) = \psi(i)$ we must have $\psi(c_\ell) = \psi(d_\ell)$ for all $\ell \in [s]$.
In particular, we have $f_{(\mu,\nu)}(\lambda) = 0$ if $r \neq s$, so we assume that $r = s$. 
Next replace values $d_1,\dots,d_r$ in $(I,J)$ with $c_1,\dots,c_r$ to obtain $(\tilde{I},\tilde{J})$.
Let $\tilde{\rho}$ be the set partition of $\tilde{I} \cup \tilde{J}$ obtained as follows.
\begin{itemize}
    \item Let $\tau \in \Pi_m$ be the set partition whose only non-singleton blocks are $\{c_1,d_1\}, \{c_2,d_2\}, \dots, \{c_r,d_r\}.$
    \item Construct the join $\tau \vee \rho$ in the set partition lattice $\Pi_m$.
    \item Delete each occurrence of $d_i$ from $\tau \vee \rho$ to obtain $\tilde{\rho}$.
\end{itemize}
The set partition $\tilde{\rho}$ encodes all the equivalences induced by insisting $\phi(i) = \phi(j)$, so we have $f_{(\mu,\nu)}(\rho) = f_{(\tilde{\mu},\tilde{\nu})}(\tilde{\rho})$ as desired.
Note that $\tilde{\nu} = \nu$ in this case.

{\bf Case 2:} {\em 
$i$ belongs to an $s$--cycle as in Case 1 and $j$ belong to a path
\[
d_0 \to d_1 \to \dots \to j = d_p \to \dots \to d_r.
\]}

We must have $\psi(d_\ell) = \psi(c_{\ell -p})$ where $\ell - p$ is interpreted mod $s$.
In particular, if $r \geq s$ we have $f_{(\mu,\nu)}(\lambda) = 0.$ 
When $r < s$, remove $d_0,d_1,\dots,d_r$ from $(I,J)$ to construct $(\tilde{I},\tilde{J})$, then replace all instances of $d_\ell$ with $c_{\ell-p}$ in $\rho$.
Merging blocks as in Case 1, we obtain a set partition $\tilde{\rho}$ with the desired properties.
Here, we see $\tilde{\mu} = \mu$ and $\tilde{\nu}$ is obtained from $\nu$ by removing a single $r$.

{\bf Case 3:} {\em $i$ and $j$ belong to distinct paths
\[
c_0 \to c_1  \to \dots \to i = c_q \to \dots \to c_s, \quad d_0 \to d_1 \to \dots \to j = d_p \to \dots \to d_r.
\]
with $p \leq q$.} 

We must have $\psi(d_\ell) = \psi(c_{\ell+q-p})$ whenever both quantities are defined.
Here, to construct $(\tilde{I},\tilde{J})$ we merge the two paths by removing the values $d_i$ for which $i + q - p \geq 0$, now mapping $c_s$ to $d_{s-q+p+1}$ if such a term exists.
No $d$ can map to $c_0$ since  $p \leq q$.
Note the resulting path has length at least $\max\{r,s\}$, with equality only when one path is subsumed by the other.
In particular, for $(\tilde{\mu},\tilde{\nu})$ the cycle--path type of $(\tilde{I},\tilde{J})$ we have $m_1(\tilde{\nu}) \leq m_1(\nu)$.
Again, we merge values in $\rho$ to produce $\tilde{\rho}$ as above and see  that $f_{(\mu,\nu)}(\rho) = f_{(\tilde{\mu},\tilde{\nu})}(\tilde{\rho})$ as desired.

This completes our proof.
\end{proof}

An example may clarify the proof of Proposition~\ref{p:poset-to-partition}. Let $m = 13$ and let $(I,J)$ be the partial permutation whose directed graph is given by
\[ 
1 \to 2 \to 3 \to 4 \to 5 \quad \quad 6 \to 7 \to 8 \to 9 \quad \quad 10 \to 11 \quad \quad 12 \to 13.
\]
We let $\rho \in \Pi_m$ be the set partition
\[ \rho := \{ \{1,12\}, \, \{2 \}, \, \{3,11\}, \, \{4,7,10\}, \, \{5\}, \, \{6,13\}, \, \{8\}, \, \{9\} \}. \]
Observe that $i = 4$ and $j = 7$ are in the same block of $\rho$. Since $i$ and $j$ belong to distinct paths of $(I,J)$, we are in Case 3 of the proof of Proposition~\ref{p:poset-to-partition}. If $\psi: [m] \to [n]$ is counted by $f_{(\mu,\nu)}(\rho)$ we have $\psi(4) = \psi(7)$, which forces $\psi(3) = \psi(6)$ and $\psi(5) = \psi(8)$. If we let $\tau \in \Pi_m$ be the set partition
\[ \tau := \{ \{1\}, \, \{2\}, \, \{3,6\}, \, \{4,7\}, \, \{5,8\}, \, \{9\}, \, \{10\}, \, \{11\}, \, \{12\}, \, \{13\} \}\]
encoding the equalities $\psi(4) = \psi(7)$, $\psi(3) = \psi(6)$ and $\psi(5) = \psi(8)$, we compute the join
\[ \tau \vee \rho := \{ \{1, 12 \}, \, \{2\}, \, \{3, 6, 11, 13\}, \{ 4,7,10\}, \, \{5,8\}, \{9\}\}.\]
We obtain $(\tilde{I},\tilde{J})$ by merging paths to obtain 
\[
1 \to 2 \to 3 \to 4 \to 5 \to 9 \quad \quad 10 \to 11 \quad \quad 12 \to 13.
\]
Observe that the elements $6,7,8$ appearing in $(I,J)$ which are missing from $(\tilde{I},\tilde{J})$ appear in distinct 2-element blocks of $\tau$. In order to obtain $\tilde{\rho}$, we erase the elements $6,7,8$ from $\tau \vee \rho$; this yields
\[ \tilde{\rho} = \{ \{1,12\}, \, \{2\}, \, \{3,11, 13\}, \, \{4,10\}, \, \{5\}, \, \{9\} \}.\]
If $(\tilde{\mu},\tilde{\nu})$ is the cycle-path type of $(\tilde{I},\tilde{J})$, we have $f_{(\mu,\nu)}(\rho) = f_{(\tilde{\mu},\tilde{\nu})}(\tilde{\rho})$.

Let $\mu: \Pi_m \times \Pi_m \to \mathbb{Z}$ be the M\"obius function of the set partition lattice $\Pi_m$. We  compute $g(\hat{0}_m)$ using M\"obius inversion to prove our Theorem~\ref{t:moment}.

\begin{proof}[Proof of Theorem~\ref{t:moment}]
For a set partition $\rho = \{\rho_1,\dots,\rho_k\} \in \Pi_m$ it is well-known that the lower M\"obius invariant $\mu(\hat{0}_m,\rho)$ is given by the formula
\[
\mu(\hat{0}_m,\rho) = (-1)^{|\rho| - m} \prod_{i=1}^k (|\rho_i|-1)!.
\]
Then by M\"obius inversion
\begin{equation}
\label{eq:g}
g(\hat{0}_m) = \sum_{\rho \in \Pi_n} \mu(\hat{0}_m,\rho) \cdot f(\rho).
\end{equation}
By combining Propositions~\ref{p:function-count} and~\ref{p:poset-to-partition}, we know $f(\rho)$ is a polynomial of degree at most $k$.
The other terms in~\eqref{eq:g} are functions of $m$, independent of $n$.
Therefore, the result follows.
\end{proof}

Later, we will require a deeper understanding of maximal degree terms in $f_{(\mu,\nu)}$ as defined in Theorem~\ref{t:moment}.

\begin{corollary}
    \label{c:top-degree-moment}
Let $(I,J)$ be a partial permutation of size $k$ with cycle--path type $(\mu,\nu)$.
Then each degree $k$ monomials of $f_{(\mu,\nu)}$ is divisible by $n^{a_0}\cdot m_1^{a_1}$ for some $a_0,a_1$ with $a_0 + a_1 \leq m_1(\nu)$ and $a_1 \geq m_1(\mu)$.
\end{corollary}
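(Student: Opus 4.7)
The plan is to reuse the M\"obius-inversion identity
\[
f_{(\mu,\nu)}\;=\;\sum_{\rho \in \Pi_m} \mu(\hat{0}_m,\rho)\,f(\rho)
\]
established in the proof of Theorem~\ref{t:moment}, where each nonzero $f(\rho)$ equals $c_{(\mu_\rho,\nu_\rho)}(\lambda)$ for a reduced cycle--path type produced by Proposition~\ref{p:poset-to-partition}. The first step is to read off the top-degree part of $c_{(\mu',\nu')}$ directly from formula~\eqref{eq:good-count}: the weighted-degree-$(|\mu'|+|\nu'|)$ component factors, up to a nonzero constant, as $m_1^{m_1(\mu')}\,(n-m_1)^{m_1(\nu')}\,\prod_{i\geq 2} m_i^{m_i(\mu')+m_i(\nu')}$, since the top-weighted term of the $\ell=1$ factor is itself while that of each $\ell \geq 2$ factor is the monomial $-\ell\, m_\ell$. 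Binomially expanding $(n-m_1)^{m_1(\nu')}$ then shows that every top-degree monomial of $c_{(\mu',\nu')}$ has $n$-exponent at most $m_1(\nu')$ and $m_1$-exponent at least $m_1(\mu')$.

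The next step is to pin down which $\rho$ can contribute to the degree-$k$ component of $f_{(\mu,\nu)}$. Because $\deg c_{(\mu_\rho,\nu_\rho)} \leq |\mu_\rho|+|\nu_\rho|$, only $\rho$ with $|\mu_\rho|+|\nu_\rho|=k$ are relevant, so I would revisit the three cases of Proposition~\ref{p:poset-to-partition} and classify the size-preserving reductions. Case 1 (cycle merging) strictly loses the merged cycle's length, and Case 2 (absorbing a path into a cycle) strictly loses the absorbed path's length; both reduce size. Case 3 merges two paths of lengths $r, s$ with alignment parameters $0 \leq p \leq q \leq s$ into a path of length $r+q-p \leq r+s$, with equality exactly when $p=0$ and $q=s$---that is, exactly the end-to-end fusion where the tail of one path is identified with the head of another. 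Iterating, the only size-preserving reductions are end-to-end path fusions; in particular $\mu_\rho=\mu$ (so $m_1(\mu_\rho)=m_1(\mu)$) while $m_1(\nu_\rho) \leq m_1(\nu)$, since length-one paths can only be consumed, never produced, under such fusions.

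Combining these observations, every top-degree monomial appearing in any contributing $c_{(\mu_\rho,\nu_\rho)}$ has $n$-exponent bounded above by $m_1(\nu_\rho) \leq m_1(\nu)$ and $m_1$-exponent bounded below by $m_1(\mu_\rho)=m_1(\mu)$, and these extremal bounds are inherited by the $\ZZ$-linear combination defining $f_{(\mu,\nu)}$. Taking $a_0$ and $a_1$ to be the monomial's actual $n$- and $m_1$-exponents then witnesses the required divisibility. The main obstacle lies in the second step: verifying that end-to-end fusion is the unique size-preserving Case-3 reduction requires a careful tracking of the merged-path length in terms of $p$ and $q$, and ruling out the many near-miss configurations in which one path is only partially absorbed into another. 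Once this geometric point is secured, the subsequent exponent bookkeeping is routine.
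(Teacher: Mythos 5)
Your argument is correct and is essentially the paper's own proof: both read the top weighted-degree part of $c_{(\mu',\nu')}$ off of \eqref{eq:good-count} as a constant times $m_1^{m_1(\mu')}(n-m_1)^{m_1(\nu')}\prod_{i\geq 2}m_i^{m_i(\mu')+m_i(\nu')}$, and then use the M\"obius expansion \eqref{eq:g} together with Proposition~\ref{p:poset-to-partition} to restrict which cycle--path types $(\mu',\nu')$ can contribute in degree $k$. Your second step simply makes explicit a point the paper leaves implicit --- that the only size-preserving reductions in the case analysis are end-to-end path fusions, which is what forces $\mu'=\mu$ and $m_1(\nu')\leq m_1(\nu)$ for the degree-$k$ contributions.
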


\begin{proof}
In~\eqref{eq:good-count} we see the contribution to maximal degree terms must include all terms corresponding to $\mu$, so a maximal degree monomial will be divisible by $m_1^{m_1(\mu)}$.
Additionally, we see terms corresponding to $\nu$ can only feature $n$ when $\ell = 1$, so we have a term $(n - m_1)^{m_1(\nu)}$.
From Proposition~\ref{p:poset-to-partition}, we see for all terms in~\eqref{eq:g} that $f(\rho) = c_{(\mu',\nu')}$ of degree $k$ that $\mu' = \mu$ and $m_1(\nu') \leq m_1(\nu)$.
The result now follows.
\end{proof}
	
This method gives some degree of control when $m$ diverges much more slowly than $n$.
The number of terms in~\eqref{eq:g} is the $m$th Bell number $B_m < \left(\frac{.7962 m}{\log(m+1)} \right)^m$.
The M\"obius function can be bounded by
\[\mu(\rho,1_m) \leq \sqrt{2\pi m} \left(\frac{m}{e}\right)^m e^{m/12},
\]
so a coarse upper bound for the regime where this argument applies is when
\[
\sqrt{2\pi m} \left(\frac{m}{e}\right)^m e^{m/12} \cdot \left(\frac{.7962 m}{\log(m+1)} \right)^m \leq n^k
\]
for some fixed $k$.
To a first order approximation, this shows the control of degree extends to the regime where $m = O(\log n/\log \log n)$.
Further improvements should be possible by estimating the portion of set partitions $\rho$ for which $f(\rho)$ does not vanish, which depends on $\lambda$ in a non-trivial way.

\section{Regular Statistics}
\label{sec:Regular}

From the introduction, recall for $(U,V)$ a partial permutation with $U \cup V = [m]$, $C \subseteq [m-1]$ and $f \in \CC[x_1,\dots,x_m]$ that the associated \emph{constrained translate}
\[
T^f_{(U,V),C} = \sum_{L = \{\ell_1 < \dots < \ell_m\} \in \binom{\NN}{m}_C} f(\ell_1,\dots,\ell_m) \cdot 1_{L(U)\,L(V)}
\]
has \emph{size} $k$, \emph{shift} $|C|$ and \emph{power} $k + \deg f - |C|$.
We call $\{(U,V),C,f\}$ a \emph{packed triple}.
Regular statistics are linear combinations of constrained translates.
In this section, first we prove Theorem~\ref{t:regular-moment} on moments of regular statistics.
Then we show that many classical permutation statistics  are regular statistics, allowing us to recover a litany of prior results as special cases of Theorem~\ref{t:regular-moment}.

\subsection{Proof of Theorem~\ref{t:regular-moment}}
\label{ss:regular-moment}

The purpose of this subsection is to prove Theorem~\ref{t:regular-moment}, which states that moments of regular statistics applied to a uniformly random element of $K_\lambda$ are polynomials satisfying certain degree conditions.
Since regular statistics are linear combinations of constrained translates, the expected value result will follow by linearity of expectation for constrained translates.
To extend the result to higher moments, we show products of regular statistics are also regular and that their degrees are subadditive.
This last feature is a key advantage of regular statistics relative to the bivincular statistics considered in~\cite{DK}, streamlining their arguments even in the case where $X$ is uniform on $\symm_n$.

To prove the moment result holds for constrained translates, we need a preliminary results to deal with the polynomial weight $f$ in the constrained translate $T^f_{(U,V),C}$.

\begin{lemma}
\label{l:vincular-poly}
	Let $f \in \CC[x_1,\dots,x_k]$ and $C \subseteq [k-1]$ with $|C| = q$.
	Then there exists $\overline{f}_C \in \mathbb{C}[n]$ with $\deg \overline{f}_C= \deg f$ so that
	\begin{equation}
	\label{eq:vincular-poly}
	\sum_{\{i_1< \dots < i_k\} \in \binom{[n]}{k}_C} f(i_1,\dots,i_k) = \overline{f}_C(n) \binom{n-q}{k-q}.
	\end{equation}
\end{lemma}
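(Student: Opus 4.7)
The idea is to convert the adjacency constraints imposed by $C$ into a block decomposition and thereby reduce the constrained sum to a classical strict-chain power sum.

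First, observe that $C$ partitions $[k]$ into $d := k-q$ maximal runs $B_1,\ldots,B_d$ of consecutive integers inside which the chosen $i_t$'s must be consecutive in $[n]$. Writing $b_r := \min B_r$, every tuple $\{i_1<\cdots<i_k\} \in \binom{[n]}{k}_C$ is determined by the block-start values $x_r := i_{b_r}$, subject to $x_{r+1}\geq x_r+|B_r|$ and $x_d+|B_d|-1\leq n$. I would then introduce the change of variables $y_r := x_r - (b_r-r)$, which converts these constraints into the standard strict chain
\[
1 \leq y_1 < y_2 < \cdots < y_d \leq n-q,
\]
providing a bijection $\binom{[n]}{k}_C \leftrightarrow \binom{[n-q]}{d}$. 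Under this bijection each coordinate is written as $i_t = y_{r(t)} + (t - r(t))$, where $r(t)$ is the block index of $t$, so substituting into $f$ produces a polynomial $\tilde{f} \in \CC[y_1,\ldots,y_d]$ of total degree at most $\deg f$.

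It then remains to prove that
\[
\sum_{\{y_1<\cdots<y_d\}\subseteq [N]} \tilde{f}(y_1,\ldots,y_d) \;=\; \binom{N}{d}\,P(N)
\]
with $N=n-q$ for some $P \in \CC[N]$ of degree $\deg \tilde{f}$; taking $\overline{f}_C(n) := P(n-q)$ then yields the conclusion. By linearity of the sum it suffices to handle a single monomial $y_1^{a_1}\cdots y_d^{a_d}$, and for this I would argue by induction on $d$, using Faulhaber's identity that $\sum_{y=1}^{N} y^a$ is a polynomial in $N$ of degree $a+1$. This shows the sum is a polynomial in $N$ of degree $d+\sum a_j$. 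Since the set $\binom{[N]}{d}$ is empty whenever $N<d$, the sum vanishes at $N=0,1,\ldots,d-1$, so $\binom{N}{d}$ divides the resulting polynomial, and the quotient has degree $\sum a_j$, which matches the degree contributed by the monomial.

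The main obstacle, and the only real subtlety, is checking that the change of variables $y_r = x_r - (b_r-r)$ does realise the claimed bijection: one must verify that the gap constraints $x_{r+1}\geq x_r+|B_r|$ translate exactly into strict inequalities $y_{r+1} > y_r$ and that the upper bound $x_d+|B_d|-1\leq n$ becomes $y_d \leq n-q$. Once this is pinned down, the remainder is standard polynomial bookkeeping and the divisibility argument above completes the proof.
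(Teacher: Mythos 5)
Your proof is correct and follows essentially the same route as the paper: your change of variables $y_r = x_r - (b_r - r)$ is precisely the paper's shift map $\mathrm{sh}_C$ reducing $\binom{[n]}{k}_C$ to $\binom{[n-q]}{k-q}$, and your monomial-by-monomial polynomiality argument plus the vanishing of the sum at $N=0,\dots,d-1$ to extract the factor $\binom{N}{d}$ matches the paper's treatment of the $C=\varnothing$ case. The only difference is cosmetic (you do the reduction first and the unconstrained sum second, and use Faulhaber rather than the finite-difference recursion $F(n)-F(n-1)$), so no further comment is needed.
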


\begin{proof}
We first prove the case where $C = \varnothing$.
By linearity we may assume that $f = x_1^{a_1}  \cdots x_m^{a_m}$ is a monomial in $x_1, \dots, x_m$. 
The result is clear for $m = 0$. Given $m > 0$
the function
\[
F(n) := \sum_{\{i_1<\dots<i_m\} \in \binom{[n]}{m}} f(i_1,\dots,i_m) = 
\sum_{\{i_1<\dots<i_m\} \in \binom{[n]}{m}} (i_1)^{a_1} \cdots (i_m)^{a_m}
\]
satisfies 
\begin{equation}
\label{F-recursion}
F(n) - F(n-1) = n^{a_m} \cdot \sum_{\{i_1<\dots<i_{m-1}\} \in \binom{[n-1]}{m-1}} (i_1)^{a_1} \cdots (i_{m-1})^{a_{m-1}}
\end{equation}
where the summation $n^{a_m} \cdot \sum_{\{i_1<\dots<i_{m-1}\} \in \binom{[n-1]}{m-1}} (i_1)^{a_1} \cdots (i_{m-1})^{a_{m-1}}$ is inductively a polynomial
in $n$ of degree $\deg f + m - 1$.  Since Equation~\eqref{F-recursion}
 holds for all $n > 0$, we see that $F(n)$ is a polynomial in $n$ of degree $\deg f + m$.
 It is evident that $F(i) = 0$ for $i = 0, 1, \dots, m-1$ so $F(n)$ is divisible by $\binom{n}{m}$ in the ring $\CC[n]$.

For $j \in [k]$, let $c_j = |\{c \in C: c < j\}|$.
The shift map $\mathrm{sh}_C:\binom{[n]}{k}_C \to \binom{[n-p]}{k-p}$ defined by
\[
\mathrm{sh}_C(\{i_1< \dots < i_k\}) = \{i_j - c_j: j-1 \notin C\}
\]
is a bijection.
Viewing $f$ as a function of $\mathrm{sh}_C(I)$, the result follows from the case where $C = \varnothing$.
\end{proof}

We can now prove the analogue of Theorem~\ref{t:regular-moment} for constrained translates.

\begin{proposition}
    \label{p:translate-moment}

Let $(U,V)$ be a size $k$ packed partial permutation with cycle--path type $(\mu,\nu)$, $U \cup V = [m]$, $f \in \CC[x_1,\dots,x_k]$ and $C \subseteq [m-1]$ with $|C| = q$ so the power of $T^f_{(U,V),C}$ is $p = k + \deg f - q$.
Then for $\lambda \vdash n$ we have
\[
(n)_m \EE_\lambda \left[T^f_{(U,V),C}\right] = \binom{n-q}{k-q}\overline{f}_C(n)f_{(\mu,\nu)}(n,m_1(\lambda),\dots,m_k(\lambda))
\]
where $\overline{f}_C$ and $f_{(\mu,\nu)}$ are as defined in Lemma~\ref{l:vincular-poly} and Theorem~\ref{t:moment}, respectively.
In addition, we have $\deg (n)_m \EE[T^f_{(U,V),C}] = p + k$ where $\deg n = 1$ and $\deg m_i = i$.
\end{proposition}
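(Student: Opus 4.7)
The plan is to unwind the definition of $T^f_{(U,V),C}$ and interchange summation with expectation. Multiplying by $(n)_m$ gives
\[
(n)_m \EE_\lambda\bigl[T^f_{(U,V),C}\bigr] = \sum_{L \in \binom{\NN}{m}_C} f(\ell_1,\dots,\ell_m) \cdot (n)_m \EE_\lambda\bigl[1_{L(U)\,L(V)}\bigr].
\]
Since $1_{L(U)\,L(V)}$ vanishes identically on $\symm_n$ whenever $L \not\subseteq [n]$, the effective range of summation collapses to $L \in \binom{[n]}{m}_C$.

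The key observation is that for every $L = \{\ell_1 < \dots < \ell_m\} \subseteq [n]$, the partial permutation $(L(U), L(V))$ has the same cycle--path type $(\mu,\nu)$ as $(U,V)$: its directed graph is obtained from $G(U,V)$ by relabeling vertices via the order-preserving injection $L$, so the two graphs are isomorphic. Proposition~\ref{p:cycle-path} and Theorem~\ref{t:moment} together therefore give
\[
(n)_m \EE_\lambda\bigl[1_{L(U)\,L(V)}\bigr] = f_{(\mu,\nu)}(n, m_1(\lambda), \dots, m_k(\lambda)),
\]
independently of $L$. This uniformity across $L$ is the decisive feature, and it is what allows $f_{(\mu,\nu)}$ to pull outside the sum.

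What remains is the purely combinatorial sum $\sum_{L \in \binom{[n]}{m}_C} f(\ell_1,\dots,\ell_m)$, which Lemma~\ref{l:vincular-poly} evaluates in closed form as $\overline{f}_C(n) \cdot \binom{n-q}{m-q}$; multiplying by $f_{(\mu,\nu)}$ produces the claimed product formula. For the degree statement, I combine $\deg f_{(\mu,\nu)} = k$ from Theorem~\ref{t:moment} (in the grading $\deg n = 1$, $\deg m_i = i$) with $\deg \overline{f}_C = \deg f$ from Lemma~\ref{l:vincular-poly} and the $n$--degree $m - q$ of the binomial coefficient, then rearrange using $p = k + \deg f - q$. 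I do not anticipate any serious obstacle: the only substantive point to verify is that the relabeling by an increasing injection genuinely preserves cycle--path type, but this is essentially immediate from the construction of $G(U,V)$. The heavy lifting was already done in Theorem~\ref{t:moment} and Lemma~\ref{l:vincular-poly}; this proposition is primarily a matter of assembling those two results via linearity.
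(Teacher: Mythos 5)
Your proposal is correct and follows the same route as the paper's proof: expand the constrained translate, apply linearity of expectation, use the fact that order-preserving relabeling preserves cycle--path type so that Theorem~\ref{t:moment} contributes the same factor $f_{(\mu,\nu)}$ for every $L$, and then evaluate the remaining sum with Lemma~\ref{l:vincular-poly}. One point worth flagging: you obtain the binomial coefficient $\binom{n-q}{m-q}$ (and hence total degree $p+m$), whereas the statement and the paper's proof write $\binom{n-q}{k-q}$ and degree $p+k$; since the sum in $T^f_{(U,V),C}$ runs over $m$-element subsets, Lemma~\ref{l:vincular-poly} must be applied with subset size $m=|U\cup V|$, so your version is the correct one --- a check with $\exc = T^1_{(1)(2),\varnothing}$, where $(n)_2\EE_\lambda[\exc]=n(n-1)(n-m_1)/2$, confirms this. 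The discrepancy is a conflation of $k$ and $m$ in the paper and does not propagate to Theorem~\ref{t:regular-moment}, whose degree bound is stated after dividing by $(n)_m$ and multiplying by $(n)_{dq}$.
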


\begin{proof}
    Since the cycle--path type of every term in $T^f_{(U,V),C}$ is $(\mu,\nu)$, we have
\begin{align*}
     \EE[T^f_{(U,V),C}(X)] &= \sum_{L = \{\ell_1 < \dots < \ell_m\} \in \binom{\NN}{m}_C} f(\ell_1,\dots,\ell_m) \cdot \EE[1_{L(U)\,L(V)}(X)]\\
& = \sum_{L = \{\ell_1 < \dots < \ell_m\} \in \binom{\NN}{m}_C} f(\ell_1,\dots,\ell_m) \cdot \frac{1}{(n)_m}f_{(\mu,\nu)}(n,m_1(\lambda),\dots,m_k(\lambda))\\
& = \binom{n-q}{k-q}\frac{1}{(n)_m} \overline{f}_C(n)f_{(\mu,\nu)}(n,m_1(\lambda),\dots,m_k(\lambda))
\end{align*}
with the first equality by linearity of expectation, the second by Theorem~\ref{t:moment} and the third by Lemma~\ref{l:vincular-poly}.
The degree follows since $\deg \overline{f}_C = \deg f$, $\deg f_{(\mu,\nu)} = k$ and $\deg \binom{n-q}{k-q} = k-q$.
\end{proof}

By linearity of expectation, Proposition~\ref{p:translate-moment} implies Theorem~\ref{t:regular-moment} for the first moment ($d=1$).
To extend this to higher moments, we will show the product of regular statistics is regular and that the size, shift and power are subadditive.
Before showing this, we must define these quantities precisely, which requires a degree of care since constrained translates are not linearly independent.

\begin{definition}
    \label{def:regular-size}
For $\Psi$ a regular statistic with
\[
\Psi = \sum_{I = \{(U,V),C,f)\} \in \III} a_I T^f_{(U,V),C},
\]
the \emph{size}, \emph{shift} and \emph{power} of $\Psi$ for this expansion are, respectively, the maxima of $|U|,|C|$ and $|U|+\deg f - |C|$ among all $I$.
The \emph{size}, \emph{shift} and \emph{power} of $\Psi$ are the minimal size, shift and power among all expressions of $\Psi$ as a linear combination of constrained translates.
\end{definition}

\begin{proposition}
    \label{p:regular-algebra}
Let $\Phi,\Psi$ be regular statistics with sizes $k_1,k_2$, shifts $q_1,q_2$ and powers $p_1,p_2$, respectively.
Then $\Phi \cdot \Psi$ is a regular statistic with size at most $k_1 + k_2$, shift at most $q_1 + q_2$ and power at most $p_1 + p_2$.
\end{proposition}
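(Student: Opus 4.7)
The plan is to reduce by bilinearity to a product of two single constrained translates, expand that product as a double sum, and reorganize it as a sum of new constrained translates indexed by the combinatorics of how the two supports merge. Set $\Phi = T^f_{(U_1,V_1),C_1}$ and $\Psi = T^g_{(U_2,V_2),C_2}$ with $k_i = |U_i|$, $q_i = |C_i|$, and $p_i = k_i + \deg f_i - q_i$. The product expands as
\[
\Phi \cdot \Psi = \sum_{L_1 \in \binom{\NN}{m_1}_{C_1},\, L_2 \in \binom{\NN}{m_2}_{C_2}} f(L_1)\, g(L_2) \cdot 1_{L_1(U_1)\,L_1(V_1)} \cdot 1_{L_2(U_2)\,L_2(V_2)},
\]
and for each summand the product of indicators either vanishes (when the combined edges conflict) or equals $1_{(U,V)}$ for the partial permutation on $L_1 \cup L_2$ formed by taking the union of edges.

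To group the terms, I would fix $m$ with $\max(m_1,m_2) \le m \le m_1+m_2$ and an ordered pair $(\alpha_1,\alpha_2)$ of order-preserving injections $\alpha_i \colon [m_i] \hookrightarrow [m]$ satisfying $\alpha_1([m_1]) \cup \alpha_2([m_2]) = [m]$. Each pair $(L_1,L_2)$ with $|L_1 \cup L_2| = m$ decomposes canonically as a set $L = L_1 \cup L_2 \in \binom{\NN}{m}$ together with such an $(\alpha_1,\alpha_2)$. Call the pair \emph{admissible} if $\alpha_i(c+1) = \alpha_i(c)+1$ for every $c \in C_i$ (so that the adjacency constraints survive the merge) and the combined edges transported to $[m]$ form a consistent partial permutation. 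Summing over the $L$'s realizing one admissible $(\alpha_1,\alpha_2)$ produces a single constrained translate $T^{h_\alpha}_{(U_\alpha,V_\alpha),C_\alpha}$, where $(U_\alpha,V_\alpha)$ is the merged packed partial permutation on $[m]$, $C_\alpha = \alpha_1(C_1) \cup \alpha_2(C_2) \subseteq [m-1]$, and $h_\alpha(x_1,\dots,x_m) = f(x_{\alpha_1(1)},\dots,x_{\alpha_1(m_1)}) \cdot g(x_{\alpha_2(1)},\dots,x_{\alpha_2(m_2)})$. Since only finitely many admissible pairs occur, this writes $\Phi \cdot \Psi$ as an explicit linear combination of constrained translates.

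The size bound $|U_\alpha| = k_1 + k_2 - o \le k_1 + k_2$ (where $o$ counts coinciding edges under the merge) and the shift bound $|C_\alpha| \le |C_1| + |C_2| \le q_1 + q_2$ are immediate, and $\deg h_\alpha \le \deg f + \deg g$. The power of each merged term satisfies the identity
\[
|U_\alpha| + \deg h_\alpha - |C_\alpha| = p_1 + p_2 + |\alpha_1(C_1) \cap \alpha_2(C_2)| - o,
\]
so the claimed bound reduces to the combinatorial inequality $o \ge |\alpha_1(C_1) \cap \alpha_2(C_2)|$. The main obstacle is establishing this: whenever an adjacency constraint $c \in C_1$ is identified under the merge with one in $C_2$, the pair $\alpha_1(c),\alpha_1(c)+1$ lies in the shared image $\alpha_1([m_1]) \cap \alpha_2([m_2])$, and one must argue via the packed structure of $(U_i,V_i)$ that any such doubly shared consecutive pair forces a matching edge coincidence contributing to $o$. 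I would carry out this case analysis by classifying admissible merges according to the cycle-path type of $(U_\alpha,V_\alpha)$, which then yields the desired power bound and completes the proof.
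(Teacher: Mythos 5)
Your reduction to a product of two constrained translates, the reorganization of the double sum by merge data $(\alpha_1,\alpha_2)$, and the resulting size and shift bounds all track the paper's proof, which performs the same grouping by ``repeated subtraction''; your bookkeeping is actually more explicit. The gap is exactly where you locate it: the inequality $o \geq |\alpha_1(C_1) \cap \alpha_2(C_2)|$ is left unproved, and no case analysis will establish it, because it is false. Take $\Phi = T^{1}_{(1)(2),\{1\}} = \sum_{\ell} 1_{(\ell)(\ell+1)}$ and $\Psi = T^{1}_{(2)(1),\{1\}} = \sum_{\ell} 1_{(\ell+1)(\ell)}$, so that $k_1=k_2=1$, $q_1=q_2=1$, $p_1=p_2=0$. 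The diagonal merge $L_1=L_2$ identifies both vertices but neither edge: the merged graph is the $2$-cycle $1 \to 2 \to 1$, so $o=0$ while $|\alpha_1(C_1)\cap\alpha_2(C_2)|=1$, and the corresponding summand $T^{1}_{(12)(21),\{1\}} = \sum_{\ell} 1_{(\ell,\ell+1)(\ell+1,\ell)}$ has power $2+0-1 = 1 > p_1+p_2$. Your hoped-for implication ``doubly shared consecutive pair forces a coinciding edge'' fails because the two translates route their edges through the shared pair in opposite directions.

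Nor is this an artifact of a bad expansion. One computes $(n)_2\,\EE_\lambda[\Phi\Psi] = 2m_2 n + (n-m_1)(n-m_1-3)$, which has degree $3$ in the grading $\deg n = \deg m_1 = 1$, $\deg m_2 = 2$; if $\Phi\Psi$ admitted an expansion into translates of power at most $0$ and shift at most $2$, Proposition~\ref{p:translate-moment} would force degree at most $2$, and monomials in $n,m_1,m_2$ are linearly independent as functions of $\lambda$. So the power bound in the statement itself fails for this pair, not merely your proof of it. You should know that the paper's own argument has the identical gap --- it asserts without justification that the power of a merged term is maximized when the two index sets are disjoint, which the same example refutes --- so your proposal is, if anything, more honest about the difficulty. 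What your identity does yield unconditionally is power at most $p_1+p_2+\min(q_1,q_2)$, since $o \geq 0$ and $|\alpha_1(C_1)\cap\alpha_2(C_2)| \leq \min(q_1,q_2)$; recovering the stated bound $p_1+p_2$ would require either restricting the class of translates (the problem evaporates when $C_1$ or $C_2$ is empty, e.g.\ for classical pattern counts) or showing that the offending merged terms, which necessarily involve cycles and hence carry factors $m_i$ with $i \geq 2$, are harmless for the asymptotic applications downstream.
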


\begin{proof}
Let $((U,V),C,f)$ and $((X,Y),D,g)$ be packed triples with $U \cup V = [m], X \cup Y = [\ell]$.
Applying linearity, it is enough to show this result when  $\Phi = T^f_{(U,V),C}$ and $\Psi =  T^g_{(X,Y),D}$.

We  expand the product $\Phi \cdot \Psi = T^f_{(U,V),C} \cdot T^g_{(X,Y),D}$ as a linear combination of indicator functions $\one_{I,J}$.
A typical term in this expansion  looks like
\begin{equation}
\label{fg-product}
(f(M) \cdot \one_{M(U),M(V)}) \cdot (g(L) \cdot \one_{L(X),L(Y)}) = f(M)g(L)\cdot \one_{M(U),M(V)} \cdot \one_{L(X),L(Y)}
\end{equation}
for a pair of sets $M \in \binom{[n]}{m}_C$ and $L \in \binom{[n]}{\ell}_D$.
Here we adopt the shorthand $f(M) := f(a_1, \dots, a_m)$ for $M = \{a_1 < \cdots < a_m \}$ and similarly for $g(L)$.

If the expression \eqref{fg-product} does not vanish 
it must equal $f(M)g(L) \cdot \one_{I,J}$ for some partial permutation $(I,J)$ with $I \cup J = M \cup L := S$ and $|S| = r \leq \ell+m$.
We write $(I,J) = (S(U'),S(V'))$ for a unique packed partial permutation $(U',V')$ with $U'$ increasing and set $S$.
The vincular constraints given by $C$ and $D$ on $S$ are given by the set $I(C) \cup J(D) \subseteq S$. 
The compression  $C' = [r](I(C)\cup J(D))$ of the set $C$ gives equivalent vincular constraints on the packed partial permutation
 $(U',V')$.
Furthermore, the product
 $f(M) \cdot g(L) = h(M\cup L)$ is a polynomial with $\deg h = \deg f + \deg g$ 
 so $T^{h}_{(U,'V'),C'}$ occurs as a summand in $T^f_{(U,V),C} \cdot T^g_{(X,Y),D}$.
Repeated subtraction shows that $T^f_{(U,V),C} \cdot T^g_{(X,Y),D}$ is regular.
The largest possible value of the size $|U'| = |V'|$ is $k_1 + k_2$; this occurs when $M \cap L = \varnothing$.
Similarly, the power $|U'| + \deg h - |C'| = |V'| + \deg h - |C'|$ is $p_1 + p_2$; this also occurs when $M \cap L = \varnothing$.
Finally, the shift achieves maximum value $|I(C) \cup J(D)| = q_1+q_2$ when $I(C) \cap J(D) = \varnothing$.
While these values are all attained for a product of constrained translates, cancellation can occur, hence the inequality.
\end{proof}

An important consequence of Proposition~\ref{p:regular-algebra} is the fact that regular statistics form an algebra.
We are now prepared to prove Theorem~\ref{t:regular-moment}.

\begin{proof}[Proof of Theorem~\ref{t:regular-moment}]
Since $\Psi$ is a regular statistic with size $k$, shift $q$ and power $p$, by Proposition~\ref{p:regular-algebra} we see $\Psi^d$ us a regular statistic with size at most $dk$, shift at most $dq$ and power at most $dp$.
The result now follows by Proposition~\ref{p:translate-moment} and linearity of expectation.
\end{proof}

By replacing Theorem~\ref{t:moment} with the observation for $(I,J)$ of size $k$ that
\[
\EE_{\symm_n}[1_{IJ}] = \frac{1}{(n-k)!},
\]
we obtain an analogue of Theorem~\ref{t:regular-moment}.

\begin{theorem}
\label{t:uniform-regular-moment}
    Let $\Psi$ be a regular statistic with size $k$, shift $q$ and power $p$.
    Then there exists a polynomial $e_{\Psi,d} \in \ZZ[n]$ of degree at most $dp + dq$ so that
\[
(n)_{dq} \EE_{\symm_n}[\Psi^d] = e_{\Psi,d}(n).
\]
\end{theorem}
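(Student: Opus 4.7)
The plan is to mirror the proof of Theorem~\ref{t:regular-moment} step by step, substituting the elementary identity $\EE_{\symm_n}[1_{IJ}] = 1/(n)_k$ for the role played there by Theorem~\ref{t:moment}. This identity holds because, for any size-$k$ partial permutation $(I,J)$, the constraints $w(i_\ell) = j_\ell$ fix $w$ on exactly $k$ positions of $[n]$, leaving $(n-k)!$ free completions; crucially the answer has no dependence on cycle-path type, so the polynomial $f_{(\mu,\nu)}$ of Theorem~\ref{t:moment} collapses to the constant $1$.

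First I would establish the uniform analogue of Proposition~\ref{p:translate-moment}. For a constrained translate $T^f_{(U,V),C}$ of size $k$, shift $q$, and power $p = k + \deg f - q$, linearity of expectation combined with Lemma~\ref{l:vincular-poly} gives
\[
\EE_{\symm_n}\!\bigl[T^f_{(U,V),C}\bigr] \;=\; \frac{1}{(n)_k}\sum_{L \in \binom{[n]}{m}_C} f(L) \;=\; \frac{\overline{f}_C(n)\binom{n-q}{k-q}}{(n)_k}.
\]
Using the factorisations $(n)_k = (n)_q \cdot (n-q)_{k-q}$ and $\binom{n-q}{k-q} = (n-q)_{k-q}/(k-q)!$, this collapses to
\[
(n)_q\,\EE_{\symm_n}\!\bigl[T^f_{(U,V),C}\bigr] \;=\; \frac{\overline{f}_C(n)}{(k-q)!},
\]
a polynomial in $n$ of degree $\deg f = p - k + q$.

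Then, by Proposition~\ref{p:regular-algebra}, $\Psi^d$ expands as a finite linear combination $\sum_\alpha c_\alpha T^{g_\alpha}_{(X_\alpha,Y_\alpha),D_\alpha}$ of constrained translates of size $k_\alpha \le dk$, shift $q_\alpha \le dq$, and power $p_\alpha \le dp$. By the previous step, $(n)_{q_\alpha}\EE_{\symm_n}[T^{g_\alpha}]$ is a polynomial of degree $p_\alpha - k_\alpha + q_\alpha$; multiplying by $(n-q_\alpha)_{dq-q_\alpha}$ (a genuine polynomial since $q_\alpha \le dq$) yields $(n)_{dq}\EE_{\symm_n}[T^{g_\alpha}]$ as a polynomial of degree $(dq-q_\alpha)+(p_\alpha-k_\alpha+q_\alpha) = dq+p_\alpha-k_\alpha \le dp+dq$. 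Summing with the coefficients $c_\alpha$ and invoking linearity of expectation identifies $(n)_{dq}\EE_{\symm_n}[\Psi^d]$ with a polynomial $e_{\Psi,d}(n)$ of degree at most $dp + dq$.

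I do not anticipate any serious obstacle, since the uniform setting is strictly simpler than the cycle-type-conditioned version of Theorem~\ref{t:regular-moment}. The one slightly delicate point is the integrality claim $e_{\Psi,d} \in \ZZ[n]$: a priori the division by $(k_\alpha - q_\alpha)!$ together with the rational coefficients that can appear in $\overline{f}_C$ only guarantee a polynomial in $\QQ[n]$, so integrality, if intended literally, would demand a separate argument—for instance by rewriting $(n)_{dq}\EE_{\symm_n}[\Psi^d]$ as $(n-dq)!^{-1}\sum_{w \in \symm_n}\Psi^d(w)$ and deriving divisibility from the orbit structure of $\symm_n$ acting on partial-permutation configurations whenever $\Psi$ has integer coefficients.
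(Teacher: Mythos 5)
Your proof follows the paper's argument exactly: reduce to constrained translates via Proposition~\ref{p:regular-algebra} and linearity of expectation, then compute $\EE_{\symm_n}[T^f_{(U,V),C}]$ from the single-indicator expectation together with Lemma~\ref{l:vincular-poly} --- and your value $\EE_{\symm_n}[1_{IJ}] = 1/(n)_k$ in fact corrects a typo in the paper, which writes $1/(n-k)!$ where $(n-k)!/n!$ is meant, so your explicit cancellation $(n)_q\,\EE_{\symm_n}[T^f_{(U,V),C}] = \overline{f}_C(n)/(k-q)!$ is the honest version of the paper's ``moving the term to the left side.'' Your closing caveat about integrality is fair but applies equally to the paper's own statement (and to Theorem~\ref{t:regular-moment}), since constrained translates carry complex polynomial weights; it is a defect of the statement rather than of either proof.
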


\begin{proof}
By Proposition~\ref{p:regular-algebra} and linearity of expectation, the result will follows by an analogue of Proposition~\ref{p:translate-moment} for the uniform distribution.
Here, for $T^f_{(U,V),C}$ a constrained translate of size $k$, shift $q$ and power $p$, we have
\[
\EE_{\symm_n}[T^f_{(U,V),C}] = \binom{n-q}{k-q} \overline{f}_C(n)\frac{1}{(n-k)!}
\]
with $\overline{f}_C(n)$ as in Lemma~\ref{l:vincular-poly}.
Moving the term $\frac{1}{(n-k)!}$ to the left side, the result follows.
\end{proof}

\subsection{Examples of Regular Statistics}
\label{ss:regular-example}

In this subsection, we present several previously studied families of statistics that are regular.
The first are \emph{classical pattern counts}.
For $\sigma \in \symm_k$, say that $\pi \in \symm_n$ \emph{contains} $\sigma$ at positions $i_1 < \dots < i_k$ if $\sigma(1) \dots \sigma(k)$ and $\pi(i_1) \dots \pi(i_k)$ have the same relative order and let $N_\sigma(\pi)$ be the number of times $\sigma$ occurs in $\pi$.
Zeilberger showed for $X$ uniformly random in $\symm_n$ that $\EE[N_\sigma^d]$ is a polynomial in $n$ of degree $kd$~\cite{Zeilberger}.
A recent generalization of this result appears in~\cite{DK}, where the authors study a family of pattern counting statistics.

\begin{definition}[{\cite[Def.~2.1]{DK}}]
    \label{def:bivincular}
    A \emph{bivincular pattern}  is a triple $(\sigma,A,B)$ where $\sigma \in \symm_k$ and $A,B \subseteq [k-1]$.
	The partial permutation $(I,J)$ \emph{matches} $(\sigma,A,B)$ 
	if the sequences $I$ and $\sigma(J)$ are increasing,
	 $a \in A$ implies $i_{a+1} = i_a + 1$, and 
	 $b \in B$ implies $j_{\sigma(b+1)} = j_{\sigma(b) + 1}$.
     In essence, $A$ and $B$ provide constraints for $I$ and $J$, respectively.
     
	Given polynomials $f,g \in \RR[x_1,\dots,x_k]$, define a statistic $N^{f,g}_{\sigma,A,B}: \symm_n \rightarrow \RR$ 
	associated to the quintuple $(v,A,B,f,g)$
	to be the weighted 
	enumeration
	\begin{equation}
    \label{eq:bivincular}
	N^{f,g}_{\sigma,A,B} := \sum_{(I,J) \ \mathrm{ matches}\ (\sigma,A,B)} f(I)g(J)\cdot \one_{I,J}.
	\end{equation}
	We call the quintuple $(\sigma,A,B,f,g)$ a {\em weighted bivincular pattern} with \emph{size} $k$, \emph{shift} $|A| + |B|$ and \emph{power} $k + \deg f + \deg g - |A| - |B|$.
    A \emph{bivincular statistic} is a finite sum of $N^{f,g}_{\sigma,A,B}$'s.
\end{definition}

When $A = B = \varnothing$ and $f \equiv g \equiv 1$ in~\eqref{eq:bivincular}, we recover the pattern enumeration statistics
$N_\sigma$ from before. 
The $B = \varnothing$ case is known as a {\em (weighted) vincular} pattern counting, while further setting $f \equiv g \equiv 1$ gives an unweighted vincular pattern count.
In this case, we write $N_{\sigma,A}$ instead of the more cumbersome $N^{1,1}_{v,A,\varnothing}$.

Binvincular statistics are regular.
\begin{proposition}
\label{p:bivincular}
For $(\sigma,A,B,f,g)$ a weighted bivincular pattern, $N^{f,g}_{\sigma,A,B}$ is a regular statistic equivalent size, shift and power.
\end{proposition}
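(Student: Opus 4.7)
The plan is to explicitly realize $N^{f,g}_{\sigma,A,B}$ as a sum of constrained translates indexed by the ``overlap shape'' of the matching partial permutations, then verify size, shift, and power. For any $(I,J) \in \symm_{n,k}$ matching $(\sigma,A,B)$, the union $L = I \cup J = \{\ell_1 < \cdots < \ell_m\}$ determines a unique packed partial permutation $(U,V)$ of size $k$ with $U \cup V = [m]$ via $I = L(U)$ and $J = L(V)$. Since $I$ is increasing and $\sigma(J)$ is increasing, $U$ is increasing and $(v_{\sigma(1)}, \ldots, v_{\sigma(k)})$ is increasing, so only finitely many shapes $(U,V)$ arise, depending only on $\sigma$.

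For each such shape I would define a vincular set
\[
C = \{u_a : a \in A\} \cup \{v_{\sigma(b)} : b \in B\} \subseteq [m-1]
\]
(if the required adjacency $u_{a+1} = u_a + 1$ or $v_{\sigma(b+1)} = v_{\sigma(b)} + 1$ fails, no partial permutation of this shape matches and the term is dropped), together with the weight polynomial
\[
h(x_1, \ldots, x_m) = f(x_{u_1}, \ldots, x_{u_k}) \cdot g(x_{v_1}, \ldots, x_{v_k}),
\]
which has degree at most $\deg f + \deg g$. The key observation is that for each fixed shape, the matching partial permutations with that shape are exactly $\{(L(U), L(V)) : L \in \binom{\NN}{m}_C\}$, and summing over compatible shapes gives
\[
N^{f,g}_{\sigma,A,B} = \sum_{(U,V) \text{ compatible}} T^h_{(U,V), C},
\]
exhibiting $N^{f,g}_{\sigma,A,B}$ as a regular statistic.

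To verify parameters: every summand has $|U| = k$, so size equals $k$. The no-overlap shape ($U \cap V = \varnothing$, $m = 2k$), which exists whenever $(\sigma, A, B)$ admits a match, has $|C| = |A|+|B|$ (since $U,V$ disjoint rules out $u_a = v_{\sigma(b)}$) and yields a translate with power $k + \deg f + \deg g - |A|-|B|$, matching the bivincular shift and power. The main obstacle is overlap shapes where $u_a = v_{\sigma(b)}$ for some $a \in A, b \in B$: here $A$- and $B$-constraints collapse onto a common element of $C$, giving $|C| < |A|+|B|$ and a formally higher per-translate power. I expect to handle this by showing that such overlap translates can be absorbed into combinations with no-overlap translates of adjusted weight to produce an expansion in which every summand has shift at most $|A|+|B|$ and power at most $k + \deg f + \deg g - |A|-|B|$, so the infima of Definition~\ref{def:regular-size} match the bivincular invariants.
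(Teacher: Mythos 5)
Your decomposition is the same as the paper's: group the matching partial permutations by their packed shape $(U,V)$, attach the vincular set $C = \{u_a : a \in A\} \cup \{v_{\sigma(b)} : b \in B\}$ and the weight $h(x_1,\dots,x_m) = f(x_{u_1},\dots,x_{u_k})\,g(x_{v_1},\dots,x_{v_k})$, and observe that the matches of a fixed shape are exactly $\{(L(U),L(V)) : L \in \binom{\NN}{m}_C\}$. The paper phrases this as ``subtract off $T^h_{(U,V),C}$ and induct,'' but it is the same finite sum over shapes, and your treatment of the size and the shift (overlaps can only shrink $|C|$, and the maximum $|A|+|B|$ is attained on the disjoint-support shape) likewise agrees with the paper's.

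The gap is your final step. You are right that an overlap shape with $u_a = v_{\sigma(b)}$ has $|C| < |A|+|B|$ while $\deg h$ does not drop, so that summand has power strictly greater than $k + \deg f + \deg g - |A| - |B|$; the paper's own proof only checks the disjoint term and dismisses the rest with ``can be seen in a similar way.'' But the repair you propose --- absorbing the overlap translates into no-overlap translates so that every summand has power at most $k+\deg f+\deg g-|A|-|B|$ --- cannot succeed in general. Take $\sigma = 12$, $A = B = \{1\}$, $f = g = 1$, so the claimed power is $0$. Then $N^{1,1}_{12,\{1\},\{1\}}(\pi)$ counts the indices $i$ with $\pi(i+1) = \pi(i)+1$, and the overlap shape contributes $T^{1}_{(12)(12),\{1\}}$, the number of adjacent fixed-point pairs. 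Evaluating at $\lambda = (1^n)$ gives $N^{1,1}_{12,\{1\},\{1\}}(\mathrm{id}) = n-1$, which is unbounded; yet the computation behind Proposition~\ref{p:translate-moment} shows that a constrained translate of size $k'$, shift $q'$ and weight degree $\deg h$ satisfies $\EE_{(1^n)}[T^h_{(U,V),C}] = O(n^{k' + \deg h - q'})$, so any statistic expressible with all summands of power at most $0$ has bounded expectation on $(1^n)$. Hence no such expansion exists, the overlap terms genuinely raise the power, and the most your argument (or the paper's) yields is power at most $k + \deg f + \deg g - \min|C|$ with the minimum over compatible shapes --- in particular at most $k + \deg f + \deg g - \max(|A|,|B|)$, since the $u_a$ for $a \in A$ are distinct. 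So the obstacle you flag is real and unfixable as stated: the power claim must be weakened, or the pattern restricted so that no overlap shape is compatible.
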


\begin{proof}
To see $N^{f,g}_{\sigma,A,B}$ is a regular, let $(I,J)$ match $(\sigma,A,B)$.
Then there is a packed partial permutation $(U,V)$ with $U \cup V = [m]$ and set $S$ so that $(I,J) = (S(U),S(V))$.
Necessarily $(U,V)$ also matches $(\sigma,A,B)$.
For $a \in A$, we see $u_{a+1} = u_a+1$, and likewise for $b \in B$ we see $v_{b+1} = v_b+1$.
Additionally, $h(x_1,\dots,x_m) = (f(x_{u_1},\dots,x_{u_k})g(x_{v_1},\dots,x_{v_k})$ is a polynomial of degree $\deg f + \deg g$.
Then for $C = \{u_a: a \in A\} \cup \{v_b:b\in B\}$ we see every term in $N^f_{(U,V),C}$ appears in $N^{f,g}_{\sigma,A,B}$.
Subtracting off this term and applying induction, we see $N^{f,g}_{\sigma,A,B}$ is a sum of constrained translates, hence regular.

Clearly the size of $N^{f,g}_{\sigma,A,B}$ is $|\sigma|$.
For the claim about shift, when an indicator function $\one_{I,J}$ indexed by disjoint sets 
$I \cap J = \varnothing$ appears in the indicator expansion of $N^{f,g}_{v,A,B}$, the vincular conditions imposed by $A$ on $I$ and $B$ on $J$ are disjoint.
The claim about power can be seen in a similar way.
\end{proof}

As a corollary, we see Theorem~\ref{t:regular-moment} applies to bivincular statistics.

\begin{corollary}
    \label{c:bivincular-moments}
For $\Gamma = \{(\sigma_i,A_i,B_i,f_i,g_i):i\in I\}$ a finite set of a weighted bivincular patterns with sizes $k_i$, shifts $q_i$ and powers $p_i$ for $i \in I$, let $k = \max k_i$, $q = \max q_i$ and $p = \max p_i$.
Then there exists a polynomial $f_{\Gamma,d} \in \ZZ[n,m_1,\dots,m_k]$ of degree $dp + dq$ where $\deg n = 1$ and $\deg m_i = i$ so that for $\lambda \vdash n$
\[
(n)_{dq}\EE_\lambda\left[\left(\sum_{i \in I} N^{f_i,g_i}_{\sigma_i,A_i,B_i}\right)^d\right] = f_{\Gamma,d}(n,m_1(\lambda),\dots,m_k(\lambda)).
\]
\end{corollary}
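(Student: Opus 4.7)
The plan is to deduce Corollary~\ref{c:bivincular-moments} directly from Theorem~\ref{t:regular-moment}, using Proposition~\ref{p:bivincular} as the bridge between bivincular statistics and regular statistics. First, I would apply Proposition~\ref{p:bivincular} termwise: for each $i \in I$, the weighted bivincular statistic $N^{f_i,g_i}_{\sigma_i,A_i,B_i}$ is a regular statistic whose size, shift, and power equal $k_i$, $q_i$, and $p_i$, respectively.

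Next, I would set $\Psi := \sum_{i \in I} N^{f_i,g_i}_{\sigma_i,A_i,B_i}$. Since regular statistics are defined (Definition~\ref{d:intro-regular}) as $\CC$-linear combinations of constrained translates, the finite sum $\Psi$ is itself regular. Concatenating the constrained-translate expansions of the $|I|$ summands exhibits $\Psi$ as a linear combination of constrained translates in which no translate has size exceeding $k = \max_i k_i$, shift exceeding $q = \max_i q_i$, or power exceeding $p = \max_i p_i$. Because the size, shift, and power of a regular statistic are defined in Definition~\ref{def:regular-size} as the minima of the respective maxima over all such expansions, I obtain that $\Psi$ has size at most $k$, shift at most $q$, and power at most $p$.

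Finally, I would invoke Theorem~\ref{t:regular-moment} with $\Psi$ and these parameters, which directly furnishes a polynomial $f_{\Psi,d} \in \ZZ[n,m_1,\dots,m_{dk}]$ of degree at most $dp+dq$ (under the grading $\deg n = 1$, $\deg m_i = i$) such that $(n)_{dq}\,\EE_\lambda[\Psi^d] = f_{\Psi,d}(n,m_1(\lambda),\dots,m_{dk}(\lambda))$ for every partition $\lambda$. Taking $f_{\Gamma,d} := f_{\Psi,d}$ yields the claimed identity.

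There is really no substantive obstacle at this stage: all of the nontrivial work has been isolated in Proposition~\ref{p:bivincular} (which matches the three numerical invariants between the bivincular and regular frameworks) and in Theorem~\ref{t:regular-moment} (which supplies the polynomiality together with the degree bound). The only minor bookkeeping is verifying that the three maxima behave well under forming sums of constrained-translate expansions, which is immediate from the definition. Thus the corollary is a clean repackaging of the main moment theorem through the bivincular-to-regular translation.
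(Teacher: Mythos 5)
Your proposal is correct and follows exactly the route the paper takes: the paper's own proof is the one-line observation that the corollary ``follows immediately from Proposition~\ref{p:bivincular} and Theorem~\ref{t:regular-moment},'' and your write-up simply fills in the (straightforward) bookkeeping that the sum of regular statistics is regular with size, shift, and power bounded by the respective maxima. The only nit worth noting is that, as you correctly observe, Theorem~\ref{t:regular-moment} yields a polynomial in $n, m_1, \dots, m_{dk}$ of degree \emph{at most} $dp + dq$, so the corollary's statement of ``degree $dp+dq$'' in $\ZZ[n,m_1,\dots,m_k]$ should really read as a degree upper bound with variables up to $m_{dk}$ --- a slip in the paper's statement rather than in your argument.
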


\begin{proof}
    This follows immediately from Proposition~\ref{p:bivincular} and Theorem~\ref{t:regular-moment}.
\end{proof}

The special case where the patterns in $\Gamma$ are classical patterns is~\cite{GP}, extending the main result of~\cite{GR} where $\Gamma$ is a single classical pattern.
In this case the term $q=0$ so the term $(n)_{dq}$ vanishes and the $d$th moment equals $f_{\Gamma,d}$.
Their proofs are based on representation theory, and this polynomiality is a necessary consequence of their method.

The main moment result in~\cite{DK} is the analogue of Corollary~\ref{c:bivincular-moments} for the uniform distribution on all of $\symm_n$.
By Corollary~\ref{c:bivincular-moments}, this follows from Theorem~\ref{t:uniform-regular-moment}.

In~\cite{KLY}, the authors introduce a family of statistics analogous to bivincular statistics, but for perfect matchings.
Interpreting perfect matchings as fixed--point--free involutions, these statistics are regular statistics applied to a uniformly random member of $K_{2^{n/2}}$ with $n$ even.
By applying Theorem~\ref{t:moment} for this cycle type we recover their results.
We refer the interested reader to~\cite[\S7.3]{HR} for details.

\section{Asymptotic Results}
\label{sec:Asymptotic}

Theorem~\ref{t:regular-asymptotics} shows for $\Psi$ a sufficiently generic regular statistic and $\{\lambda^{(n)}\}$ a sequence of partitions with $X_n$ uniform on $K_\lambda$ that the limiting behavior of $\EE[\Psi (X_n)]$ only depends on the sequence $\{m_1(\lambda^{(n)})\}$ while $\VV [\Psi(X_n)]$ only depends on $\{m_1(\lambda^{(n)})\}$ and $\{m_2(\lambda^{(n)})\}$.
We will prove these results by a careful analysis of the leading term  in a product of regular statistics.
As an application, we will show for $\{\lambda^{(n)}\}$ with $\lim_{n \to \infty} m_1(\lambda^{(n)})/n = \alpha$ and $X_n$ uniform on $K_{\lambda^{(n)}}$ that there is an associated limiting object called a permuton.
We also extend Hofer's CLT for vincular statistics~\cite{Hofer} to permutations with all long cycles, generalizing a result of Fulman for the descent statistic~\cite{Fulman}.
This last result relies only on Theorem~\ref{t:moment}.

\subsection{Asymptotics of Regular Statistics}

Our main tool for understanding asymptotics of regular statistics makes explicit the observation that, since $\deg m_i = i$, when $i > 1$ such terms cannot contribute to the leading order terms of expected values.

\begin{theorem}
\label{regular-class-asymptotics}
Let $\Psi$ be a regular permutation statistic of size $k$ and power $p$.
Then there exists a polynomial
$g_\Psi \in \RR[x_1, \dots, x_n]$ of degree at most $p$ such that for $\lambda \vdash n$
\begin{equation}
\EE_\lambda\left[ \frac{\Psi}{n^p}\right] = g_\Psi \left( \frac{m_1(\lambda)}{n}, \frac{m_2(\lambda)}{n^2}, \dots, \frac{m_k(\lambda)}{n^k}  \right) + O(n^{-1}).
\end{equation}
\end{theorem}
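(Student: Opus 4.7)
The natural strategy is to invoke Theorem~\ref{t:regular-moment} with $d=1$ and then expand asymptotically in powers of $1/n$. Theorem~\ref{t:regular-moment} provides a polynomial $f_{\Psi,1} \in \ZZ[n, m_1, \ldots, m_k]$ of weighted degree at most $p+q$ (with $\deg n = 1, \deg m_i = i$) such that $(n)_q \EE_\lambda[\Psi] = f_{\Psi,1}(n, m_1(\lambda), \ldots, m_k(\lambda))$.

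I will divide both sides by $n^p (n)_q$ and use $(n)_q^{-1} = n^{-q}(1 + O(n^{-1}))$ together with the uniform bound $|f_{\Psi,1}|/n^{p+q} = O(1)$ (which follows from $m_i(\lambda) \leq n/i$). Substituting $m_i(\lambda) = n^i x_i$ with $x_i = m_i(\lambda)/n^i$, each monomial $c \cdot n^a m_1^{b_1} \cdots m_k^{b_k}$ of $f_{\Psi,1}$ becomes $c \cdot n^{(a + \sum_i i b_i) - (p+q)} \prod_i x_i^{b_i}$; this has nonpositive exponent of $n$ by the weighted degree bound, with equality precisely when $a + \sum_i i b_i = p+q$. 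Setting
\[
g_\Psi(x_1, \ldots, x_k) := \sum_{\substack{(a, b_1, \ldots, b_k) \\ a + \sum_i i b_i = p+q}} c_{a, b_1, \ldots, b_k} \prod_i x_i^{b_i},
\]
I obtain $\EE_\lambda[\Psi/n^p] = g_\Psi(x_1, \ldots, x_k) + O(n^{-1})$, where the error absorbs both the monomials of strictly smaller weighted degree (bounded using $x_i \in [0,1]$) and the $(n)_q$ correction.

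The main obstacle is the sharp degree bound $\deg g_\Psi \leq p$. A direct estimate yields only the weaker $\deg g_\Psi = \max \sum_i b_i \leq \max \sum_i i b_i \leq p+q$. Upgrading to $p$ will require showing that every top weighted-degree monomial of $f_{\Psi,1}$ has $n$-exponent $a \geq q$, so that $\sum_i b_i \leq \sum_i i b_i \leq p$. I expect this to follow by unwinding Proposition~\ref{p:translate-moment}: the factor $\binom{n-q}{k-q}$ contributes $n^{k-q}$ at leading order, and after combining with $\overline{f}_C(n)$ and canceling the denominator $(n-k)_{m-k}$, the top weighted-degree monomials of $(n)_q \EE_\lambda[T^f_{(U,V),C}]$ should inherit at least $n^q$ worth of divisibility. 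Corollary~\ref{c:top-degree-moment} supplies the parallel control for the cycle-path factor $f_{(\mu,\nu)}$, and linearity of expectation, together with minimality of the chosen expansion for $\Psi$, then passes the bound from single constrained translates to general regular statistics.
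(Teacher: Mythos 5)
Your proposal follows the paper's proof essentially verbatim: both invoke Theorem~\ref{t:regular-moment} with $d=1$, substitute $x_i = m_i(\lambda)/n^i$, group the monomials of $f_{\Psi,1}$ by weighted degree, take $g_\Psi$ to be the top weighted-degree part, and absorb the lower-degree terms together with the $n^q/(n)_q$ correction into the $O(n^{-1})$ error using $m_i(\lambda) \leq n/i$. The one point where you go beyond the paper is in flagging that the bound $\deg g_\Psi \leq p$ needs every top weighted-degree monomial of $f_{\Psi,1}$ to carry $n$-exponent at least $q$ --- the paper's proof passes over this silently --- and your proposed route through Proposition~\ref{p:translate-moment} and Corollary~\ref{c:top-degree-moment} is the right way to supply that missing detail.
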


\begin{proof}
Theorem~\ref{t:regular-moment}
 says
\[
\EE_\lambda \left[ \frac{\Psi}{n^p} \right] = \frac{f_\Psi(n,m_1, \dots, m_k)}{n^{p+q}} \cdot \frac{n^q}{(n)_q}
\]
 where $f_\Psi$ is a polynomial of degree at most $p+q$ under the degree conventions $\deg n = 1$ and $\deg m_i = i$.
 Every monomial in $f$ is of the form $n^{a_0} m_1^{a_1} \cdots m_k^{a_k}$ where the exponents $a_0, a_1, \dots, a_k$ satisfy
 $a_0 + \sum_{i = 1}^k i \cdot a_i = \ell \leq p + q$.
 We write
\[
 \frac{n^{a_0} m_1^{a_1} \cdots m_k^{a_k}}{n^{p+q}} = \left(  \frac{n}{n}  \right) \left(  \frac{x_1}{n}  \right) \left(  \frac{x_2}{n^2}  \right) \cdots 
 \left(  \frac{x_k}{n^k}  \right) \cdot n^{-(p+q-\ell)}.
\]
 Grouping terms of degree $\ell$ into the polynomial $g_\ell$, we have 
\begin{equation}
    \label{eq:moment-polynomial}
 \EE \left[ \frac{\Psi(X)}{n^p} \right] = \frac{n^q}{(n)_q} \cdot \sum_{\ell = 0}^{k+q} n^{-(p+q-\ell) } \cdot
  g_{\ell} \left(\frac{m_1}{n}, \frac{m_2}{n^2}, \dots, \frac{m_k}{n^k} \right)
\end{equation}
 and the result follows by taking $g_\Psi = g_{p+q}$.
\end{proof}

\begin{corollary}
\label{expectation-fixed-point}
	Let $\Psi$ be a regular permutation statistic of power $p$ and $\{\lambda^{(n)}\}$ be a sequence of cycle types so that 
$
\lim_{n \to \infty}m_1(\lambda^{(n)})/n = \alpha \in [0,1].
$
Then exists a polynomial $f \in \mathbb{R}[x]$ so that
\begin{equation}
	\lim_{n \to \infty} \EE_{\lambda^{(n)}} \left[ \frac{\Psi}{n^p}\right] = f(\alpha).
\end{equation}
\end{corollary}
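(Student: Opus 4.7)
The plan is to deduce this directly from Theorem~\ref{regular-class-asymptotics}, which already gives a uniform polynomial description of $\EE_\lambda[\Psi/n^p]$ in the variables $m_i(\lambda)/n^i$. Specifically, that theorem provides a polynomial $g_\Psi \in \RR[x_1,\dots,x_k]$ so that
\[
\EE_\lambda\left[\frac{\Psi}{n^p}\right] = g_\Psi\!\left(\frac{m_1(\lambda)}{n},\frac{m_2(\lambda)}{n^2},\dots,\frac{m_k(\lambda)}{n^k}\right) + O(n^{-1}).
\]
So all I need to do is identify the limiting value of each of the arguments $m_i(\lambda^{(n)})/n^i$ and then invoke continuity of $g_\Psi$.

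The key quantitative observation is that for any partition $\lambda \vdash n$ and any $i \geq 1$, the parts of size $i$ satisfy $i \cdot m_i(\lambda) \leq n$, and hence $m_i(\lambda)/n^i \leq 1/(i\cdot n^{i-1})$. Consequently, for $i \geq 2$ we have $m_i(\lambda^{(n)})/n^i \to 0$ as $n \to \infty$, regardless of the sequence $\{\lambda^{(n)}\}$. Meanwhile, by hypothesis, $m_1(\lambda^{(n)})/n \to \alpha$. Thus the vector of inputs to $g_\Psi$ converges to $(\alpha,0,\dots,0) \in \RR^k$.

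Define $f(x) := g_\Psi(x,0,0,\dots,0) \in \RR[x]$, which is a polynomial in a single variable. Since $g_\Psi$ is a polynomial and thus continuous,
\[
\lim_{n \to \infty}g_\Psi\!\left(\frac{m_1(\lambda^{(n)})}{n},\frac{m_2(\lambda^{(n)})}{n^2},\dots,\frac{m_k(\lambda^{(n)})}{n^k}\right) = g_\Psi(\alpha,0,\dots,0) = f(\alpha).
\]
Combining with the $O(n^{-1})$ error bound from Theorem~\ref{regular-class-asymptotics} then yields the corollary. There is no genuine obstacle here: the only conceptual point is recognizing that the unusual grading $\deg m_i = i$ in the polynomial $f_\Psi$ of Theorem~\ref{t:regular-moment} is precisely what forces variables $m_i$ with $i \geq 2$ to be suppressed in the scaled limit, so only the $m_1$-dependence survives.
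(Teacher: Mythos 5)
Your proposal is correct and follows essentially the same route as the paper: both invoke Theorem~\ref{regular-class-asymptotics}, use the bound $i\cdot m_i(\lambda)\leq n$ to see that $m_i(\lambda^{(n)})/n^i\to 0$ for $i\geq 2$, and set $f(x)=g_\Psi(x,0,\dots,0)$. The paper's proof is just a terser version of your argument.
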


\begin{proof}
	Let $k$ be the size of $\Psi$, define $g_\Psi$ as in Theorem~\ref{regular-class-asymptotics} and observe that $m_i(\lambda^{(n)}) \leq n/i$ for all $i$.
	Then
\[
\lim_{n \to \infty} \EE_{\lambda^{(n)}} \left[\frac{\Psi}{n^p}\right] = \lim_{n \to \infty} g_\Psi \left(\frac{m_1}{n}, \frac{m_2}{n^2}, \dots, \frac{m_k}{n^k}  \right) + O(n^{-1}) = g_\Psi(\alpha,0,\dots,0).
\]
Setting $f(x) = g_\Psi(x,0,\dots,0)$, the result follows.
\end{proof}

Note the polynomial in Corollary~\ref{expectation-fixed-point} can be 0.
For example, the $2$--cycle count $m_2$ is a regular statistic with power $2$ with $g_{m_2} = m_2/n^2$.
Since $m_2(\pi) \leq n/2$ for all $\pi \in \symm_n$, we see $g_{m_2}/n^2 \to 0$.

For a large class of regular statistics including vincular statistics, the normalization factor $n^p$ in Corollary~\ref{expectation-fixed-point} is the correct normalizing coefficient.
For such statistics, we now extend Corollary~\ref{expectation-fixed-point} to a law of large numbers.
To do so, we will show structural properties for the variance of a regular statistic.
First, we show a technical lemma about vincular translates.
Recall $R_n$ from~\eqref{eq:reynolds}.

\begin{lemma}
	\label{translate-product}
Let $T^f_{(I,J),A}$ and $T^g_{(K,L),B}$ be constrained translates with respective powers $p$ and $q$.
Also, let $k,\ell$ be the sizes and $(\mu,\nu),(\rho,\tau)$ be the cycle--path types of $(I,J)$ and $(K,L)$, respectively.
Then the sum of coefficients for indicators $1_{UV}$ with $(U,V) \in \symm_{n,d}$ and cycle--path type  $(\alpha,\beta)$ in
\begin{equation}
\label{eq:translate-covariance}
	T_{(I,J),A}^f \cdot T_{(K,L),B}^g - T_{(I,J),A}^f \cdot R_n (T_{(K,L),B}^g),
\end{equation}
is $O(n^{p+q +d - k -\ell})$  and $O(n^{p+q-1})$ when $|\alpha| + |\beta| = k + \ell$, or equivalently $\alpha = \mu \cup \rho, \beta = \nu \cup \tau$.
\end{lemma}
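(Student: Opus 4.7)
The plan is to rewrite~\eqref{eq:translate-covariance} as $T^f_{(I,J),A}\cdot\bigl(T^g_{(K,L),B} - R_n T^g_{(K,L),B}\bigr)$, expand both factors as linear combinations of indicators $1_{UV}$, and then collect coefficients of the resulting product by cycle-path type. The factor $R_n T^g$ is a class function whose value at $\pi$ equals $\EE_{\mathrm{cyc}(\pi)}[T^g]$, which Proposition~\ref{p:translate-moment} expresses as an explicit polynomial in $n$ and the cycle counts $m_i(\mathrm{cyc}(\pi))$. Each $m_i(\mathrm{cyc}(\pi))$ in turn equals $\tfrac{1}{i}$ times a sum of indicators $1_{U'V'}(\pi)$ ranging over partial permutations of cycle-path type $((i),\varnothing)$, which lets us expand $R_n T^g$ as a linear combination of indicators whose coefficients depend only on the cycle-path type of $(U',V')$.

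With $T^g$ and $R_n T^g$ in indicator form, I would multiply through by $T^f$ using the bookkeeping from the proof of Proposition~\ref{p:regular-algebra}: each pairwise product $1_{IJ}\cdot 1_{KL}$ is either zero or a single indicator $1_{UV}$ encoding the graph obtained by gluing $G(I,J)$ and $G(K,L)$ along the overlap $(I\cup J)\cap(K\cup L)$. Partitioning the resulting sum by cycle-path type $(\alpha,\beta)$ of $(U,V)\in\symm_{n,d}$, the coefficient sums split naturally into two regimes.

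When $d < k+\ell$ the two graphs must share at least $k+\ell-d$ vertices. There are $O(n^{|U\cup V|})$ partial permutations $(U,V)$ of a given cycle-path type $(\alpha,\beta)$, and each coefficient is $O(n^{\deg f + \deg g})$ coming from the weights $f(M)g(N)$. Unwinding the definitions $p = k + \deg f - |A|$ and $q = \ell + \deg g - |B|$, both $T^f T^g$ and $T^f \cdot R_n T^g$ individually produce coefficient sums of size $O(n^{p+q+d-k-\ell})$, so their difference satisfies the same bound.

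The main obstacle is the case $d = k+\ell$, which forces $(\alpha,\beta) = (\mu\cup\rho,\nu\cup\tau)$ with disjoint placement of $G(I,J)$ and $G(K,L)$. Here the naive bound is $O(n^{p+q})$, and the improvement to $O(n^{p+q-1})$ must come from cancellation between the two terms. Intuitively, $R_n T^g$ reproduces the average of $T^g$, and for indicators of the disjoint type the mean contributions from $T^g$ should match the leading generic-placement sum in $T^f T^g$. Making this cancellation quantitative should follow by combining Theorem~\ref{t:moment}'s explicit polynomial formula with Corollary~\ref{c:top-degree-moment}'s description of top-degree monomials, which together identify the leading asymptotic of $\EE_\lambda[T^g]$ with the generic-placement contribution in $T^f T^g$ up to an error of order $n^{p+q-1}$.
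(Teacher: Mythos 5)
Your setup is sound and you have correctly located the heart of the matter: the only nontrivial claim is the extra factor of $n^{-1}$ when $d=k+\ell$, and it must come from cancellation between the two terms. But the proposal stops exactly there --- ``making this cancellation quantitative should follow by combining Theorem~\ref{t:moment} \ldots with Corollary~\ref{c:top-degree-moment}'' is an assertion, not an argument, and neither cited result delivers it: Corollary~\ref{c:top-degree-moment} describes which monomials $n^{a_0}m_1^{a_1}$ can appear in $f_{(\mu,\nu)}$, which says nothing about coefficient sums in the indicator expansion of a product. The paper's proof supplies the two explicit computations you are missing. For $T^f_{(I,J),A}\cdot T^g_{(K,L),B}$, the disjoint-type terms are indexed by pairs $(S,T)$ with $S\cap T=\varnothing$, and the coefficient sum is $\sum_S f(S)\,\overline{g}_S(n)$ where $\overline{g}_S$ sums $g$ over $T$ avoiding $S$; the key estimate is $\overline{g}_S=\overline{g}+O(n^{q-1})$, giving $(\overline{f}\cdot\overline{g})(n)+O(n^{p+q-1})$. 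For $T^f_{(I,J),A}\cdot R_n(T^g_{(K,L),B})$ one writes $R_n(T^g)=\overline{g}(n)\,\frac{1}{n!}\sum_{\sigma}\one_{(\sigma(K),\sigma(L))}$ and counts the $\sigma$ placing $K\cup L$ off of $S$, giving $\overline{f}(n)\overline{g}(n)\,(n-k)_\ell/(n)_\ell=(\overline{f}\cdot\overline{g})(n)+O(n^{p+q-1})$. The leading terms agree and cancel. Without these two computations the lemma is not proved.

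A second, structural problem: your plan to expand $R_n(T^g)$ through Proposition~\ref{p:translate-moment} as a polynomial in the $m_i$'s, and then to expand each $m_i$ as a sum of indicators of cycle--path type $((i),\varnothing)$, replaces the na\"ive expansion (which is what the lemma, and its use in Theorem~\ref{regular-variance}, refers to) by a different one. Since the $1_{UV}$ are not linearly independent, the coefficient sum attached to a given cycle--path type depends on the chosen expansion. Concretely, your expansion of $R_n(T^g)$ involves only pure-cycle-type indicators, so gluing them onto the terms of $T^f$ can never produce the path parts $\tau$; if $\tau\neq\varnothing$, the target type $(\mu\cup\rho,\nu\cup\tau)$ receives no disjoint-placement contribution at all from $T^f\cdot R_n(T^g)$ in your bookkeeping, and the cancellation you want cannot even be set up type by type. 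You should instead keep $R_n(T^g)$ in the form $\overline{g}(n)R_n(\one_{(K,L)})$, whose na\"ive expansion runs over indicators of the same cycle--path type $(\rho,\tau)$ as $(K,L)$. (Your counting in the $d<k+\ell$ case is also loose --- it ignores the vincular constraints $A,B$ --- but since no cancellation is needed there, that part is repairable.)
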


Implicitly,  Lemma~\ref{translate-product} is an assertion about the naive expansion of Equation~\eqref{eq:translate-covariance} into $1_{IJ}$'s. 

\begin{proof}
We treat the case where  $|\alpha| + |\beta| = k + \ell$ first as this will be the only case of real interest.

Let $\overline{f}$ and $\overline{g}$ be associated to $f$ and $g$ as in Lemma~\ref{l:vincular-poly}.
We first compute the sum of coefficients for indicators whose size is $k+\ell$.
All terms in the product $T_{(I,J),A}^f \cdot T_{(K,L),B}^g$ have the form
	\[
	\one_{(S[I] \cup T[K], S[J] \cup T[L])}
	\]
	where $S \in \binom{[n]}{k}_A$ and $T \in \binom{[n]}{\ell}_B$, and those with cycle--path type of size = $k + \ell$ have $S \cap T = \varnothing$.
	Define
\[
\overline{g}_S(n) := \sum_{T = \{t_1 < \cdots < t_\ell \}\in \binom{[n]}{\ell}_B \cap \binom{[n]-S}{\ell}}  g(t_1,\dots,t_\ell).		
\]
This depends on the set $S$ in a non-trivial way.
However, one can see $\overline{g} - \overline{g}_S$ is of degree at most $q-1$ since $T$ not in $\binom{[n]-S}{\ell}$ necessarily includes one of the $k$ elements of $S$, and the number of such $T$ is a polynomial in $n$ of degree $\ell - 1$.
Therefore, the sum of coefficients for such indicators will be
\[
\sum_{S = \{s_1 < \cdots < s_k\} \in \binom{[n]}{k}_A} f(s_1,\dots,s_k) \overline{g}_S = \sum_{S = \{s_1 < \cdots < s_k \} \in \binom{[n]}{k}_A} f(s_1,\dots,s_k) (\overline{g} + O(n^{q-1})),
\]
which is $ (\overline{f} \cdot \overline{g})(n) + O(n^{p+q-1})$.
	
	We now compute the sum of coefficients for indicator with $|\alpha| + |\beta| = k + \ell$ in the product $T_{(I,J),A}^f \cdot R_n\, (T_{(K,L),B}^g)$, which expands as
	\begin{align*}
T_{(I,J),A}^f \cdot R_n\, (T_{(K,L),B}^g) &= T_{(I,J),A}^f \cdot \overline{g}(n) R_n\,( \one_{(K,L)}) \\
& = \overline{g}(n) \sum_{S \in \binom{[n]}{k}} \one_{(S[I],S[J])} \cdot  \frac{1}{n!} \sum_{\sigma \in \symm_n} \one_{(\sigma(K), \sigma(L))}.
	\end{align*}
	For each partial permutation $(S[I],S[J])$ and permutation $\sigma$ so that the set $\{\sigma(1), \dots, \sigma(\ell)\} \cap S$ is empty, 
	we obtain an indicator whose cycle--path type $(\alpha,\beta)$ satisfies $|\alpha| + |\beta| = k + \ell$, while all other indicators have smaller size.
	There are $(n-k)_\ell \cdot (n-\ell)!$ such permutations $\sigma$ for each $S$.
	Therefore the desired sum of coefficients is
\[
	\overline{f}(n) \cdot \overline{g}(n) \cdot \frac{(n-k)_\ell}{(n)_\ell} = (\overline{f}\cdot \overline{g})(n) + O(n^{p+q-1}),	
\]
	and the result follows for the case where $|\alpha| + |\beta| = k + \ell$.
	
To obtain an indicator with $v = |\alpha| + |\beta| < k + \ell$, note $k+\ell-v$ values in $S$ must be merged with values in $T$, which can only happen in finitely many ways. 
Therefore, a similar argument shows the sum of coefficients for indicators with graph $G$ in either product is at most $O(n^{p+q-v})$ and the result follows.
We do not require further cancellation in this case.
\end{proof}

Using this, we can show the limiting variance of a regular statistic depends only on the proportion fixed points and 2-cycles.

\begin{theorem}
\label{regular-variance}
	Let $\Psi$ be a regular statistic of size $k$ and power $p$.
	Also, let $\{\lambda^{(n)}\}$ be a sequence of integer partitions so that
\[
		\lim_{n \to \infty} \frac{m_1(\lambda^{(n)})}{n} = \alpha \in [0,1], \quad \lim_{n \to \infty} \frac{m_2(\lambda^{(n)})}{n} = \beta \in [0,1].
\]
	Then there exists polynomials $V_{\Psi,1},V_{\Psi,2} \in \CC[x]$ independent so that
	\begin{equation}
	\label{eq:beta-variance}
		\lim_{n \to \infty} \frac{\VV_{\lambda^{(n)}}[\Psi(X_n)]}{n^{2p-1}} = V_{\Psi,1}(\alpha) + \beta V_{\Psi,2}(\alpha).
	\end{equation}
	
\end{theorem}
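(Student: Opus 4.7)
The plan is to analyze $\VV_\lambda[\Psi]/n^{2p-1}$ via the Reynolds operator identity, Lemma~\ref{translate-product}, and a degree-counting argument in the spirit of Theorem~\ref{regular-class-asymptotics}. The main obstacle is confirming that the $n^{2p}$ contributions to $\EE_\lambda[\Psi^2]$ and $\EE_\lambda[\Psi]^2$ cancel, so that $\VV_\lambda[\Psi] = O(n^{2p-1})$; once this is in hand, the degree count identifies the cycle-type coordinates surviving in the limit.

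First I would write $\Psi = \sum_i a_i T_i$ as a linear combination of constrained translates with powers $p_i \leq p$, sizes $k_i$, and shifts $q_i \leq q$. Because $R_n(T_j)$ is a class function taking value $\EE_\lambda[T_j]$ on $K_\lambda$, we have the identity $\EE_\lambda[T_i]\,\EE_\lambda[T_j] = \EE_\lambda[T_i \cdot R_n(T_j)]$, and therefore
\[
\VV_\lambda[\Psi] \;=\; \sum_{i,j} a_i a_j \, \EE_\lambda\!\bigl[T_i T_j - T_i \cdot R_n(T_j)\bigr].
\]
Lemma~\ref{translate-product} controls the indicator expansion of each summand: the coefficient sum is $O(n^{p_i+p_j-1})$ over the generic cycle--path types of size $k_i + k_j$, and $O(n^{p_i+p_j+d-k_i-k_j})$ over indicators of each smaller size $d$. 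The sharpening from the naive $O(n^{p_i+p_j})$ to $O(n^{p_i+p_j-1})$ is exactly the $n^{-1}$ gain that eliminates the would-be $n^{2p}$ contribution to each covariance.

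Next I would combine these coefficient bounds with Theorem~\ref{t:moment} to express each $\EE_\lambda[1_{UV}]$ as a weighted polynomial in $n, m_1(\lambda), m_2(\lambda), \dots$ divided by a Pochhammer factor, grouping the resulting terms by cycle--path type. After clearing denominators, $\VV_\lambda[\Psi]$ becomes a polynomial of weighted degree at most $2p + 4q$ (with $\deg n = 1$ and $\deg m_i = i$) divided by a factor of degree $4q$ in $n$. For a monomial $n^{a_0} m_1^{a_1} m_2^{a_2}\cdots$ to contribute to $\lim_{n\to\infty} \VV_\lambda[\Psi]/n^{2p-1}$, its asymptotic scaling $n^{a_0 + a_1 + a_2 + \cdots}$ must equal $n^{2p + 4q - 1}$; combined with the weighted-degree bound $a_0 + a_1 + 2a_2 + \cdots \leq 2p + 4q$, this forces $a_2 + 2a_3 + 3a_4 + \cdots \leq 1$, hence $a_i = 0$ for $i \geq 3$ and $a_2 \in \{0,1\}$. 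Substituting $m_1(\lambda)/n \to \alpha$ and $m_2(\lambda)/n \to \beta$, only $\alpha$ and $\beta$ (the latter linearly) survive in the limit, yielding the stated form $V_{\Psi,1}(\alpha) + \beta \cdot V_{\Psi,2}(\alpha)$.

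The technical crux will be executing this program while rigorously verifying the $n^{2p}$ cancellation in the second paragraph: although Lemma~\ref{translate-product} supplies the required $n^{-1}$ gain, isolating the precise $n^{2p-1}$ coefficient also requires tracking the top-degree information from Corollary~\ref{c:top-degree-moment} through the indicator expansion. The bookkeeping between Lemma~\ref{translate-product} and Theorem~\ref{t:moment} is the principal source of difficulty.
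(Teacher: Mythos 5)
Your proposal follows essentially the same route as the paper: the paper also writes the variance as $\EE_\lambda[(\Psi - R_n\Psi)^2]$, expands it into covariances of pairs of constrained translates, invokes Lemma~\ref{translate-product} for the crucial $O(n^{p_i+p_j-1})$ gain that kills the would-be $n^{2p}$ term, and uses Corollary~\ref{c:top-degree-moment} together with the weighted grading $\deg n = 1$, $\deg m_i = i$ to conclude that only $m_1$ and a single linear factor of $m_2$ can survive the normalization by $n^{2p-1}$. The only difference is cosmetic bookkeeping (you clear denominators to $(n)_{2q}(n)_q^2$ directly, while the paper routes the degree count through the $g_\ell$ decomposition of Theorem~\ref{regular-class-asymptotics} applied to $\Xi = (\Psi - R_n\Psi)^2$), so the proposal is correct and matches the paper's argument.
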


\begin{proof}
For $n \geq 1$, let $w^{(n)} \in K_{\lambda^{(n)}}$.
By definition 
\[
\VV_{\lambda^{(n)}}\left[\Psi\right] = \EE_{\lambda^{(n)}}[\Psi^2] -\EE_{\lambda^{(n)}}[\Psi]^2 = \EE_{\lambda^{(n)}}[ (\Psi - R_n\, (\Psi))^2].
\]
By Proposition~\ref{p:regular-algebra} we can view $\Xi = (\Psi - R_n\,(\Psi))^2$ as a regular statistic with power at most $2p$.
By~\eqref{eq:moment-polynomial}, there exist polynomials $g_{2p}$ and $g_{2p-1}$ so
\begin{align*}
\frac{\VV_{\lambda^{(n)}}[\Psi]}{n^{2p-1}} &= \EE_{\lambda^{(n)}}\left[\frac{\Xi}{n^{2p-1}}\right] = n \EE_{\lambda^{(n)}}\left[\frac{\Xi}{n^{2p}}\right] \\
	&= n \cdot g_{2p} \left(\frac{m_1}{n},\frac{m_2}{n^2}, \dots, \frac{m_k}{n^k}\right) + g_{2p-1}\left(\frac{m_1}{n},\frac{m_2}{n^2}, \dots, \frac{m_k}{n^k}\right) + O(n^{-1}).
\end{align*}
Recall $g_{2p}$ is homogeneous of degree $2p$.
Therefore if the coefficient of $x_1^{2p}$ in $g_{2p}$ is zero, the result will follow by taking the limit as $n \to \infty$ while setting $V_{\Psi,1}(\alpha) = g_{2p-1}(\alpha,0,\dots,0)$ and $V_{\Psi,2}(\alpha)$ to be the coefficient of $x_2$ in $g_{2p}$ evaluated at $(\alpha,0,\dots,0)$.

We now show $g_{2p}$ has no terms of the form $x_1^{2p}$.
Note $2p$ is the power of $\Xi$ and let $2q$ be its shift.
The above vanishing is equivalent to showing $f_{\Xi,1}$ as in Theorem~\ref{t:regular-moment} has no terms of the form $n^a m_1^b$ with $a+b = 2p + 2q$.
We prove this using Corollary~\ref{c:top-degree-moment} and Lemma~\ref{translate-product}, the former to characterize such terms and the latter to show their degree drops.
Since $\Psi$ is a regular statistic, there is a collection of packed triples $\Upsilon$ so that
	\begin{equation}
		\Psi = \sum_{((I,J),A,f) \in \Upsilon} c_{(I,J),A,f} T^f_{(I,J),A}.
	\end{equation}
	We then have
	\begin{align}
\VV_{\lambda^{(n)}}\left[\Psi\right] = \sum_{((I,J),A,f) \in \Upsilon} \sum_{((K,L),B,g) \in \Upsilon} c_{(I,J),A,f} c_{(K,L),B,g} \mbox{Cov}\left[T^f_{(I,J),A} T^g_{(K,L),B}\right].
	\end{align}
	where for $X,Y$ random variables $\mbox{Cov}[X,Y] = \EE[X^2] -2\EE[XY] + \EE[Y^2]$ is the covariance.
We obtain $\mbox{Cov}_{\lambda^{(n)}}\left[T^f_{(I,J),A}, T^g_{(K,L),B}\right]$ by taking $\EE_{\lambda^{(n)}}$ of Equation~\eqref{eq:translate-covariance}.

Viewing $\mbox{Cov}_{\lambda^{(n)}}\left[T^f_{(I,J),A}, T^g_{(K,L),B}\right]$ as a regular statistic (more accurately, we should apply $R_n$ to~\eqref{eq:translate-covariance}), let $f_{\mathrm{Cov}}$ be the associated polynomial as in Theorem~\ref{t:regular-moment}.
Corollary~\ref{c:top-degree-moment} shows that a term $n^a m_1^b$ with $a + b = 2p + 2q$ can only occur in $f_{\mathrm{Cov}}$ only when $T^f_{(I,J),A}$ and $T^g_{(K,L),B}$ both have power $p$ and the partial permutations $(I,J)$ and $(K,L)$ have cycle--path types $(\varnothing,1^k)$ and $(\varnothing,1^\ell)$, respectively.
This implies $|I \cup J| = 2k$ and $|K \cup L| = 2\ell$.
In this case, by Lemma~\ref{translate-product} the total degree for indicators with cycle--path type $(\varnothing,1^{k+ \ell})$ in the naive expansion of~\eqref{eq:translate-covariance} is $O(n^{2p-1}))$.
This completes our proof.
	\end{proof}

\begin{example}
The excedance statistic $\exc = T^{1}_{(1)(2)}$ has
\begin{equation}
\label{eq:exc}
R_n\,\exc = \frac{n-m_1}{2} \quad \mbox{and} \quad R_n\,\exc^2 - (R_n\,\exc)^2 = \frac{n-m_1 - 2m_2}{12}.    
\end{equation}
Since the power of $\exc$ is 1, this gives $V_{\exc,1}(\alpha) = (1-\alpha)/12$ and $\beta V_{\exc,2}(\alpha) = \beta/6$.
See~\cite[\S~8.4]{HR} for a derivation of~\eqref{eq:exc}.

\end{example}

As a consequence, we derive a weak law of large numbers for regular statistics.
	
\begin{corollary}
	\label{regular-weak-law}
	Let $\Psi$ be a regular statistic with power $p$, $\alpha \in [0,1]$ and $\{\lambda^{(n)}\}$ be a sequence of partitions so that
	\begin{equation}
	\label{eq:fixed-point}
	\lim_{n \to \infty} \frac{m_1(\lambda^{(n)})}{n} = \alpha.
	\end{equation}
For $X_n$ a uniformly random element of $K_{\lambda^{(n)}}$ and $\epsilon > 0$, 
	\begin{equation}
	\lim_{n \to \infty} \PP\left[\left|\frac{1}{n^p}(\Psi(X_n)  - \EE_{\lambda^{(n)}}[\Psi])\right| < \epsilon\right] = 0.
	\end{equation}
\end{corollary}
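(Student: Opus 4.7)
The statement as typeset reads $\lim_{n\to\infty}\PP[|Y_n|<\epsilon]=0$ for $Y_n := (\Psi(X_n)-\EE_{\lambda^{(n)}}[\Psi])/n^p$, but this is clearly a typographical inversion (either in the direction of the inequality or in the value of the limit): under the given hypotheses the typeset assertion is false, because Theorem~\ref{regular-variance} forces concentration of $Y_n$ near $0$, not dispersion. I therefore read the equation as $\lim\PP[|Y_n|\geq\epsilon]=0$, equivalently $\lim\PP[|Y_n|<\epsilon]=1$, and plan a proof of that intended weak law of large numbers.

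The plan is a routine Chebyshev argument; the only substantive step is extracting a uniform bound $\VV_{\lambda^{(n)}}[\Psi]\leq C\,n^{2p-1}$. Theorem~\ref{regular-variance} supplies the asymptotic $\VV/n^{2p-1}\to V_{\Psi,1}(\alpha)+\beta V_{\Psi,2}(\alpha)$ but under the additional hypothesis that $m_2(\lambda^{(n)})/n\to\beta$, which is not assumed here. Inspecting the proof of Theorem~\ref{regular-variance}, one has
\[
\frac{\VV_{\lambda^{(n)}}[\Psi]}{n^{2p-1}} \;=\; n\cdot g_{2p}\!\left(\tfrac{m_1}{n},\tfrac{m_2}{n^2},\dots,\tfrac{m_k}{n^k}\right) \;+\; g_{2p-1}\!\left(\tfrac{m_1}{n},\dots,\tfrac{m_k}{n^k}\right) \;+\; O(n^{-1}),
\]
and the crucial output of that proof was that $g_{2p}$ contains no pure $x_1^{2p}$ monomial. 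Consequently every monomial of $g_{2p}$ has some factor $x_j$ with $j\geq 2$; since $m_j(\lambda^{(n)})\leq n/j$, each such factor $m_j/n^j$ is $O(1/n)$, so $n\cdot g_{2p}(\cdots)=O(1)$ uniformly in $n$. The lower-order term $g_{2p-1}$ is bounded because its arguments lie in $[0,1]$. Hence $\VV_{\lambda^{(n)}}[\Psi]=O(n^{2p-1})$ uniformly, with no hypothesis on $m_2/n$ required.

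With the uniform variance bound in hand, Chebyshev's inequality yields
\[
\PP\!\left[|Y_n|\geq \epsilon\right] \;\leq\; \frac{\VV_{\lambda^{(n)}}[\Psi]}{\epsilon^{2}n^{2p}} \;=\; O\!\left(\tfrac{1}{n}\right) \;\xrightarrow[n\to\infty]{}\; 0,
\]
which is the intended conclusion. The only obstacle is the uniform variance bound, handled above by exploiting the cancellation of the $x_1^{2p}$ term in $g_{2p}$ — precisely the cancellation that was the point of Theorem~\ref{regular-variance}. Everything else is immediate.
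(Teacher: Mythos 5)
Your proof is correct and follows the same route as the paper: Chebyshev's inequality applied to a variance bound of order $n^{2p-1}$, with the displayed conclusion read (correctly) as convergence in probability, which the paper's own follow-up sentence confirms is the intent. In fact your treatment is slightly more careful than the paper's one-line argument, which cites Theorems~\ref{regular-class-asymptotics} and~\ref{regular-variance} even though the latter assumes $m_2(\lambda^{(n)})/n \to \beta$, a hypothesis not present in the corollary; your extraction of a uniform bound $\VV_{\lambda^{(n)}}[\Psi] = O(n^{2p-1})$ directly from the vanishing of all top-degree $n^{a}m_1^{b}$ terms (so that every surviving top-degree monomial carries a factor $m_j/n^j = O(1/n)$ for some $j \geq 2$) closes that small gap, and a subsequence argument on $m_2(\lambda^{(n)})/n \in [0,1/2]$ would serve equally well.
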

This is convergence in probability.

\begin{proof}
Combining Theorem~\ref{regular-class-asymptotics} and Theorem~\ref{regular-variance} with Chebyshev's inequality we have
	\begin{equation}
\PP\left( \frac{1}{n^p}\left| \Psi(X_n) - \EE_{\lambda^{(n)}}[\Psi] \right| >  n^{1/3} \sqrt{\VV_{\lambda^{(n)}}\left[\frac{1}{n^p}\Psi\right]} \right) \leq n^{-1/3}.
	\end{equation}
	Since 
	\[
	\lim_{n \to \infty} \VV_{\lambda^{(n)}}\left[\frac{1}{n^p}\Psi\right]  = O(n^{-1}),
	\]
	the result follows.
\end{proof}

\subsection{Convergence for long cycles}

There is an immense literature on limiting distributions for permutation statistics and the limiting behavior of permutations.
We briefly summarize some results relevant to our work. A more extensive discussion of prior results on these behaviors on specific conjugacy classes can be found in~\cite[\S~8.1]{HR}.

Vincular statistics are the most general family of pattern counting permutation statistics for which a central limit theorem is known.

\begin{theorem}
\label{hofer-convergence}
{\em (Hofer \cite[Thm. 3.1]{Hofer})}
Let $v \in \symm_k$, $A \subsetneq [k-1]$ with $|A|=q$ and $X_n = N_{v,A}(\Sigma_n)$ where $\Sigma_n$ is uniform in $\symm_n$.
Then there exists a constant $\sigma_{v,A}>0$ so that as $n \to \infty$ we have
\begin{equation}
\left(  \frac{X_n - \frac{n^{k-q}}{k! (k-q)!}}{n^{k-q-\frac{1}{2}}}
 \right) \xrightarrow{d} \mathcal{N}(0, \sigma^2_{v,A})
\end{equation}
where $\xrightarrow{d}$ is convergence in distribution.
\end{theorem}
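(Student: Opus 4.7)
The plan is to prove Hofer's CLT via the method of moments, leveraging the polynomial moment structure established in this paper. By Proposition~\ref{p:bivincular}, $N_{v,A}$ is a regular statistic of size $k$, shift $q$, and power $p = k-q$. Writing $X_n = \sum_{\alpha \in \mathcal{M}_n} \one_\alpha$, where $\mathcal{M}_n$ is the set of partial permutations of $[n]$ matching $(v,A)$, Theorem~\ref{t:uniform-regular-moment} gives that $(n)_{dq}\EE_{\symm_n}[X_n^d]$ is a polynomial in $n$ of degree at most $dk$, so $\EE_{\symm_n}[X_n^d]$ is a rational function in $n$ of degree at most $d(k-q)$ for each $d \geq 1$.

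First I would pin down the mean and variance. The $d=1$ case gives $\mu_n := \EE[X_n] = \binom{n-q}{k-q}/k! \sim n^{k-q}/(k!(k-q)!)$, matching the theorem. For the variance, the leading $n^{2(k-q)}$ coefficient of $\EE[X_n^2]$ is determined by pairs of partial permutations whose supports are disjoint; a short computation of the same flavor as Lemma~\ref{translate-product} shows this matches the leading coefficient of $\mu_n^2$ and cancels, so $\sigma_n^2 := \VV[X_n] = \sigma_{v,A}^2 \cdot n^{2(k-q)-1} + O(n^{2(k-q)-2})$. Verifying $\sigma_{v,A}^2 > 0$ is then an explicit positive-correlation check: isolate two overlapping partial permutations whose indicators have strictly positive covariance and confirm that their contribution survives.

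For higher moments, set $Y_n := (X_n - \mu_n)/n^{k-q-1/2}$ and expand
\[
\EE[Y_n^d] = \frac{1}{n^{d(k-q-1/2)}}\sum_{(\alpha_1,\ldots,\alpha_d) \in \mathcal{M}_n^d}\EE\biggl[\prod_{i=1}^d\bigl(\one_{\alpha_i} - \EE\one_{\alpha_i}\bigr)\biggr].
\]
A tuple contributes at leading order only if every $\alpha_i$ overlaps at least one other $\alpha_j$ in support; an ``isolated'' $\alpha_i$ is suppressed by the approximate independence of disjoint-support indicators (correct up to $O(1/n)$), costing a power of $n^{-1}$. Among the surviving tuples, those partitioning into $d/2$ overlapping pairs dominate, and a careful count should yield $\sigma_{v,A}^d \cdot (d-1)!! \cdot n^{d(k-q-1/2)}(1+o(1))$ for even $d$ and strictly lower order for odd $d$, matching the Gaussian moments and giving $Y_n \xrightarrow{d} \mathcal{N}(0,\sigma_{v,A}^2)$.

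The hardest part is the combinatorial analysis of this dominant contribution: enumerating the leading pairings, verifying the Wick-type structure, and controlling error terms uniformly in $d$. A cleaner alternative is to apply a CLT for sums on a (weighted) dependency graph: since $\one_\alpha$ and $\one_\beta$ are independent up to $O(n^{-1})$ corrections whenever their supports are disjoint, the dependency graph has maximum degree $O(n^{2k-q-1})$ out of $|\mathcal{M}_n| = \Theta(n^{2k-q})$ vertices, and together with the variance order $\Theta(n^{2(k-q)-1})$ this fits quantitative Stein-method bounds; this is essentially the F\'eray--Kammoun strategy mentioned above.
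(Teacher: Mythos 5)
This theorem is not proved in the paper at all: it is Hofer's result, quoted from \cite{Hofer}, and the authors explicitly pose proving it with the techniques of this paper as an open problem (Problem~\ref{hofer-problem}). So there is no internal proof to compare against; what you have written is an attack on that open problem, and it has genuine gaps at exactly the points where the known proofs are hard.

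The central gap is the approximate-independence step. For two matching partial permutations $\alpha,\beta$ of size $k$ with disjoint supports one has $\mathrm{Cov}[\one_\alpha,\one_\beta] = (n)_{2k}^{-1} - (n)_k^{-2} = k^2 n^{-2k-1}(1+o(1))$, which is strictly positive, and there are $\Theta(n^{2(2k-q)})$ such pairs; their total contribution to $\VV[X_n]$ is therefore $\Theta(n^{2(k-q)-1})$ --- the same order as the variance itself. So disjoint-support pairs are not error terms: they enter the leading coefficient $\sigma_{v,A}^2$, your assertion that the $n^{2(k-q)-1}$ term of the variance is produced by overlapping pairs is incorrect, and in the $d$-th central moment the heuristic that ``an isolated $\alpha_i$ costs a power of $n^{-1}$'' cannot be summed naively over the $\Theta(n^{2k-q})$ choices of that index. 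This is precisely why an ordinary dependency-graph CLT does not apply here (the indicators are not exactly independent off the graph) and why F\'eray--Kammoun \cite{FK} must use \emph{weighted} dependency graphs; invoking that machinery is not a shortcut around the difficulty but is the difficulty. Separately, the non-degeneracy $\sigma_{v,A}^2>0$ is not ``an explicit positive-correlation check'': the variance is a sum of covariances of both signs (overlapping indicators frequently have $\EE[\one_\alpha\one_\beta]=0$ and hence negative covariance) together with the disjoint-pair contribution above, and establishing strict positivity is the main advance of Hofer's paper over the earlier result of F\'eray \cite{Feray}, which proves the same convergence without any guarantee that $\sigma_{v,A}^2>0$. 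As written, your outline establishes neither the Wick structure of the higher moments nor the non-degeneracy of the limiting variance.
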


The equivalent result with no guarantee that $\sigma^2_{v,A} > 0$ appears in~\cite{Feray}, and another slight generalization appears in~\cite{Janson}.

A sequence of integer partitions  $\{\lambda^{(n)}\}$ with $\lambda^{(n)} \vdash n$ \emph{has no short cycles} if
$
\lim_{n \to \infty} m_i\left(\lambda^{(n)}\right) = 0
$
for all $i$.
Since these quantities are integers, this means $m_i(\lambda^{(n)})$ is eventually $0$.
Fulman showed for $\{\lambda^{(n)}\}$ with no short cycles that the descent statistic $\des$ is asymptotically normal.
We extend this to all vincular statistics using Hofer's central limit theorem.
To do so, we use a simple corollary of Theorem~\ref{t:moment}.

\begin{proposition}
\label{local-statistic-long} 
Fix $d,k \geq 0$ and
let $f:\symm_n \to \mathbb{R}$ be a linear combination of $1_{IJ}$'s with each $(I,J)$ of size at most $k$.
Then $\EE_\lambda[f^d]$ if a function of $m_1(\lambda),m_2(\lambda),\dots,m_{kd}(\lambda)$.
In particular, if these quantities are all 0
\begin{equation}
\EE_\lambda[f^d] = \EE_{(n)}[f^d].
\end{equation}

\end{proposition}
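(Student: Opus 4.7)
The plan is to reduce directly to Theorem~\ref{t:moment} via expansion of the power and the observation that indicator functions of partial permutations are closed under multiplication (up to vanishing).

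First, I would expand $f^d$ using linearity. Writing $f = \sum_{(I,J)} c_{IJ}\,1_{IJ}$ where each $(I,J)$ has size at most $k$, the power $f^d$ becomes a finite linear combination of products $1_{I_1 J_1} \cdot 1_{I_2 J_2} \cdots 1_{I_d J_d}$, where each factor is a partial permutation of size $\leq k$. Second, I would observe that such a product is either identically zero (if the combined constraints force a non-bijective map) or equals $1_{KL}$ for a single partial permutation $(K,L)$ obtained by taking the union of the input arrows. In particular the size of this $(K,L)$ is at most $|I_1| + \cdots + |I_d| \leq kd$.

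Third, I would invoke Theorem~\ref{t:moment} for each surviving $(K,L)$. Let $m = |K \cup L|$ and let $(\mu,\nu)$ be its cycle--path type. Theorem~\ref{t:moment} gives
\begin{equation*}
\EE_\lambda[1_{KL}] = \frac{1}{(n)_m}\,f_{(\mu,\nu)}\bigl(n,\,m_1(\lambda),\,m_2(\lambda),\,\ldots,\,m_{\tilde k}(\lambda)\bigr),
\end{equation*}
where $\tilde k \leq kd$ is the size of $(K,L)$. Since this expression depends on $\lambda$ only through $m_1(\lambda),\dots,m_{kd}(\lambda)$, and since $\EE_\lambda[f^d]$ is a fixed linear combination of these by linearity of expectation, $\EE_\lambda[f^d]$ is itself a function of $m_1(\lambda),\dots,m_{kd}(\lambda)$ alone (with $n$ fixed).

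For the final assertion, the cycle type $(n)$ has $m_i((n)) = 0$ for every $i < n$, and in particular $m_i((n)) = 0$ for $i \leq kd$ as soon as $kd < n$. Thus any $\lambda$ satisfying $m_i(\lambda)=0$ for $i \leq kd$ agrees with $(n)$ in each variable the function above depends on, so the expectations must coincide. There is no genuine obstacle here beyond bookkeeping; the only point requiring minor care is confirming that the bound $\tilde k \leq kd$ holds for every non-vanishing product, which is immediate once one notes that each new factor contributes at most $k$ new arrows to $(K,L)$.
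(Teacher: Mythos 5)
Your proposal is correct and follows essentially the same route as the paper: both arguments rest on the observation that a product of indicators $1_{I_1J_1}\cdots 1_{I_dJ_d}$ either vanishes or equals a single indicator $1_{KL}$ of size at most $kd$, and then invoke Theorem~\ref{t:moment} together with linearity of expectation. The only cosmetic difference is that the paper first reduces to the case $d=1$ before expanding $f$, whereas you expand $f^d$ directly and apply the theorem term by term; the content is identical.
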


\begin{proof}
For $(I,J)$ and $(K,L)$ partial permutations of sizes $k$ and $\ell$, note $1_{IJ} \cdot 1_{KL}$ vanishes or is $1_{UV}$ where $(U,V)$ is a partial permutation of size at most $k+\ell$.
Therefore, the result will follow from the case where $d=1$.
Write
\[
f = \sum_{(I,J) \in \symm_{n,k}} c_{IJ} \cdot 1_{IJ}.
\]
Then by linearity of expectation
\[
\EE_\lambda[f] = \sum_{(I,J) \in \symm_{n,k}} c_{IJ} \cdot \EE_\lambda[1_{IJ}] = \sum_{(I,J) \in \symm_{n,k}} c_{IJ} \cdot \EE_{(n)}[1_{IJ}] = \EE_{(n)}[f]
\]
with the second equality by Theorem~\ref{t:moment}.
\end{proof}

Note this result holds for regular permutation statistics, but is far more general since it does require any stability of coefficients.
We now extend Theorem~\ref{hofer-convergence} to cycle types with all long cycles.

\begin{theorem}
\label{our-long-cycles}
Let $v \in \symm_k$ and $A \subsetneq [k-1]$ satisfy $|A| = q$.
For $\{ \lambda^{(n)} \}$ a sequence of partitions of $n$ with no short cycles and $\Sigma_n$ uniform on $K_{\lambda^{(n)}}$, there exists $\sigma_{v,A} > 0$ so that 
\begin{equation}
\left(
\frac{N_{v,A}(\Sigma_n) - \frac{n^{k-q}}{k! (k-q)!}}{n^{k-q-\frac{1}{2}}}
\right)  \xrightarrow{d} \mathcal{N}(0, \sigma_{v,A}^2).
\end{equation}
\end{theorem}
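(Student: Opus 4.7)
The plan is to combine Hofer's central limit theorem under the uniform distribution on $\symm_n$ (Theorem~\ref{hofer-convergence}) with the moment transfer result of Proposition~\ref{local-statistic-long} via the method of moments. The key observation is that the vincular pattern count $N_{v,A}$ is already a linear combination of indicators $\one_{IJ}$ with each $(I,J)$ of size exactly $k$, so by the elementary fact that the product of two such indicators is either zero or another such indicator of size at most the sum of the sizes, any power $N_{v,A}^d$ is a linear combination of indicators $\one_{IJ}$ with each $(I,J)$ of size at most $kd$. Setting $Y_n := (N_{v,A}(\Sigma_n) - \mu_n)/\sigma_n$ where $\mu_n$ and $\sigma_n^2$ denote the mean and variance under the uniform distribution on $\symm_n$, it suffices by the method of moments to show $\EE_{\lambda^{(n)}}[Y_n^d] \to \EE[Z^d]$ for $Z \sim \mathcal{N}(0,1)$ and every $d \geq 0$.

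The crucial moment transfer step proceeds as follows. Fix $d$. Since the sequence $\{\lambda^{(n)}\}$ has no short cycles, for each $i \in \{1, \dots, kd\}$ the nonnegative integer $m_i(\lambda^{(n)})$ tends to $0$ and is therefore eventually zero. Intersecting finitely many such conditions, for all sufficiently large $n$ we have $m_1(\lambda^{(n)}) = \dots = m_{kd}(\lambda^{(n)}) = 0$. Applying Proposition~\ref{local-statistic-long} to $f = N_{v,A} - \mu_n$ (which is still a linear combination of $\one_{IJ}$'s of size $\leq k$ plus a constant, whose $d$-th power has all $(I,J)$ of size $\leq kd$), we obtain
\[
\EE_{\lambda^{(n)}}\!\left[(N_{v,A}-\mu_n)^d\right] = \EE_{(n)}\!\left[(N_{v,A}-\mu_n)^d\right]
\]
for all $n$ large enough. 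Dividing through by $\sigma_n^d$, the standardized moments under $K_{\lambda^{(n)}}$ eventually coincide exactly with the standardized moments under the uniform distribution on $\symm_n$.

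It remains to show that the standardized moments under the uniform distribution converge to the moments of $\mathcal{N}(0, \sigma_{v,A}^2)$, so that the standardized moments under $\lambda^{(n)}$ do as well. By Theorem~\ref{t:uniform-regular-moment}, each uniform moment of $N_{v,A}$ is polynomial in $n$, hence after subtracting the mean and dividing by $\sigma_n = \Theta(n^{k-q-1/2})$ we obtain moments that are bounded in $n$. Combined with Hofer's convergence in distribution, standard uniform integrability (via the fact that $\sup_n \EE[Y_n^{d+1}] < \infty$ controls the $d$-th moment) gives moment convergence for the uniform ensemble to those of $\mathcal{N}(0, \sigma_{v,A}^2)$. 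Alternatively, one may simply note that Hofer's proof itself proceeds by the method of moments and so delivers this moment convergence directly, with $\sigma_{v,A}^2$ inherited unchanged from Theorem~\ref{hofer-convergence} (in particular, $\sigma_{v,A} > 0$ is immediate).

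The main obstacle is bookkeeping in the moment transfer: one must verify carefully that the scaling constants $\mu_n$ and $\sigma_n$ used to standardize are the same for both ensembles (this is automatic from Proposition~\ref{local-statistic-long} applied with $d = 1, 2$ once $n$ is large), and one must ensure the uniform CLT is strong enough to yield moment convergence rather than only distributional convergence. Both points are routine given the polynomiality of moments guaranteed by Theorem~\ref{t:uniform-regular-moment}, so the substantive content lies entirely in the clean equality of $d$-th moments between the $\lambda^{(n)}$-ensemble and the uniform ensemble once the short cycle counts have vanished.
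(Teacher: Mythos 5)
Your overall strategy is the same as the paper's (method of moments plus the moment--transfer Proposition~\ref{local-statistic-long}, anchored by Hofer's Theorem~\ref{hofer-convergence}), but there is a genuine gap at the hand-off between the two ingredients. Proposition~\ref{local-statistic-long} gives $\EE_{\lambda^{(n)}}[f^d] = \EE_{(n)}[f^d]$, where $\EE_{(n)}$ denotes expectation over the \emph{conjugacy class $K_{(n)}$ of $n$-cycles}, not over the uniform distribution on $\symm_n$. Your sentence ``the standardized moments under $K_{\lambda^{(n)}}$ eventually coincide exactly with the standardized moments under the uniform distribution on $\symm_n$'' is therefore not what the proposition delivers: the uniform measure on $\symm_n$ is a mixture over \emph{all} conjugacy classes, including those with many short cycles, and the proposition says nothing about it. Everything you do afterwards (Hofer's CLT, the uniform integrability argument via Theorem~\ref{t:uniform-regular-moment}) lives on the uniform ensemble, so you have proved moment convergence for the wrong reference measure.

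What is missing is precisely the step the paper does first: establish the CLT, with moment convergence, for the single class $\lambda = (n)$. The paper does this by coupling a uniformly random $n$-cycle to a uniformly random permutation so that they differ in few positions with high probability, which perturbs $N_{v,A}$ by $o(n^{k-q-1/2})$ and hence preserves the limit (one still has to check that the $(n)$-class moments stay close enough to the uniform ones for the method of moments, e.g.\ via the polynomiality in Theorems~\ref{t:regular-moment} and~\ref{t:uniform-regular-moment}). Once the result is known for $K_{(n)}$, your transfer step $\EE_{\lambda^{(n)}}[(N_{v,A}-\mu_n)^d] = \EE_{(n)}[(N_{v,A}-\mu_n)^d]$ for large $n$ finishes the argument exactly as you describe. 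The rest of your write-up --- the eventual vanishing of $m_1,\dots,m_{kd}$, the harmlessness of the constant in $f = N_{v,A}-\mu_n$, and the negligible discrepancy between $\mu_n$ and the centering $n^{k-q}/(k!(k-q)!)$ --- is fine.
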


\begin{proof}
One first establishes the result for $\lambda = (n)$, which is straightforward since $\Sigma_n$ differs from a uniformly random permutation in $\symm_n$ in finitely many positions with high probability. 
Since $\{ \lambda^{(n)} \}$ has all long cycles, Proposition~\ref{local-statistic-long} guarantees for $d \geq 1$ that 
\[
\EE_{\lambda^{(n)}}[N_{v,A}^d] = \EE_{(n)}[N_{v,A}^d]
\]
for $n$ sufficiently large.
The result now follows from the Method of Moments.
\end{proof}

\subsection{Permutons} A sequence $\{ w^{(n)} \}$ of permutations with $w^{(n)} \in \symm_n$ is called {\em quasi-random} if for all $k \geq 1$ and for all $v \in \symm_k$ we have
\begin{equation}
\label{quasi-random-def} \frac{N_v(w^{(n)})}{\binom{n}{k}} = \frac{1}{k!} + o(1).
\end{equation}
This pattern-theoretic definition of randomness asserts that $\{ w^{(n)} \}$, in the limit, contains equal proportion of patterns of size $k$
for every $k \geq 1$.
This property shows that the small-scale behavior of a quasi-random sequence is close to that for a sequence of uniformly random permutations.
Quasi-random sequences of permutations were studied by Cooper \cite{Cooper} in his PhD thesis at UC San Diego.
Answering a question of Graham (see \cite{Cooper}), Kr\'al' and Pikhurko proved \cite[Thm. 1]{KP} the surprising result that 
Equation~\eqref{quasi-random-def} need only hold for $v \in \symm_4$, but that checking patterns of size up to 3 is insufficient.

\begin{corollary}
\label{quasirandom-permutation-generation}
Let $\mu_n$ be a sequence of congugacy-invariant probability distributions on $\symm_n$.
For $\Sigma_n \sim \mu_n$, if
\begin{equation}
\PP \left(  \lim_{n \to \infty} \frac{m_1(\Sigma_n)}{n} = 0 \right) = 1,
\end{equation}
then the sequence $\Sigma_n$ is quasi-random.
\end{corollary}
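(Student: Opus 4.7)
The plan is to use conjugacy-invariance of $\mu_n$ to condition on $L_n := \cyc(\Sigma_n)$, making $\Sigma_n$ uniform on $K_{L_n}$ given $L_n$, and then apply the $L^2$ concentration results for regular statistics (Theorems~\ref{regular-class-asymptotics} and \ref{regular-variance}) through the law of total variance to conclude that $N_v(\Sigma_n)/\binom{n}{k} \to 1/k!$ in probability for each $k \geq 1$ and every $v \in \symm_k$, which is the natural interpretation of quasi-randomness for a random sequence.

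By Proposition~\ref{p:bivincular}, the classical pattern count $N_v$ is a regular statistic of size $k$, shift $0$, and power $k$. Let $g_{N_v}$ be the polynomial provided by Theorem~\ref{regular-class-asymptotics}. The identity $\EE_{\symm_n}[N_v] = \binom{n}{k}/k!$, together with cycle-type averaging against a uniform permutation (whose cycle type has all normalized part-counts $m_i/n^i \to 0$, as can be seen by specializing Theorem~\ref{regular-class-asymptotics} to a long-cycle sequence $\lambda^{(n)}=(n)$), identifies $g_{N_v}(0,0,\dots,0) = 1/(k!)^2$. Equivalently, $k!\cdot g_{N_v}(0,\dots,0)\cdot n^k/\binom{n}{k} \to 1/k!$.

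Now apply the law of total variance:
\[
\VV\!\left[\frac{N_v(\Sigma_n)}{\binom{n}{k}}\right] = \EE\!\left[\VV\!\left[\frac{N_v(\Sigma_n)}{\binom{n}{k}} \,\Big|\, L_n\right]\right] + \VV\!\left[\EE\!\left[\frac{N_v(\Sigma_n)}{\binom{n}{k}} \,\Big|\, L_n\right]\right].
\]
Applied with $\lambda = L_n$, Theorem~\ref{regular-variance} gives $\VV_{L_n}[N_v]/n^{2k-1} = O(1)$ uniformly in $L_n$, because the relevant polynomial is evaluated on the compact domain $m_i/n^i \in [0,1/i]$; dividing by $\binom{n}{k}^2 \sim n^{2k}/(k!)^2$ shows the first term in the decomposition is $O(n^{-1})$. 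For the second term, Theorem~\ref{regular-class-asymptotics} yields
\[
\EE\!\left[\frac{N_v(\Sigma_n)}{\binom{n}{k}} \,\Big|\, L_n\right] = k!\cdot g_{N_v}\!\left(\frac{m_1(L_n)}{n},\frac{m_2(L_n)}{n^2},\dots,\frac{m_k(L_n)}{n^k}\right) + O(n^{-1}),
\]
and the hypothesis $m_1(L_n)/n \to 0$ almost surely, combined with the deterministic bound $m_i(L_n)/n^i \leq 1/(i\,n^{i-1}) \to 0$ for $i \geq 2$, forces this conditional expectation to converge to $1/k!$ in $L^2$ by bounded convergence, so its variance vanishes.

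Combining, $\VV[N_v(\Sigma_n)/\binom{n}{k}] \to 0$ while its mean tends to $1/k!$, and Chebyshev's inequality completes the proof. The main obstacle is transferring the deterministic-sequence hypotheses of Theorems~\ref{regular-class-asymptotics} and \ref{regular-variance} to the random cycle type $L_n$; this is handled cleanly by the total-variance decomposition together with the uniform-in-$\lambda$ boundedness of the polynomials appearing in these theorems on the compact cube of normalized cycle counts. If almost-sure rather than in-probability quasi-randomness is desired, one can replace Chebyshev with Markov bounds on higher moments obtained from Theorem~\ref{t:regular-moment} and apply Borel--Cantelli.
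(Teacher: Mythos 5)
Your proof is correct, but it is substantially more elaborate than the paper's, which simply deduces the corollary from Corollary~\ref{expectation-fixed-point} with $\alpha = 0$ (after conditioning on the cycle type, the expected normalized pattern density converges to $f(0)$, and $f(0)=1/(k!)^2$ up to the $\binom{n}{k}$ versus $n^k$ normalization is pinned down exactly as you do, by matching against the uniform measure). What you add is the concentration step: the law of total variance over $L_n = \cyc(\Sigma_n)$, the uniform $O(n^{-1})$ bound on the normalized conditional variance, and Chebyshev, which upgrades convergence of conditional expectations to convergence in probability of the pattern densities themselves. This is arguably the more honest reading of ``the sequence $\Sigma_n$ is quasi-random'' for a random sequence, and your observation that the transfer from deterministic cycle-type sequences to the random $L_n$ is handled by bounded convergence on the compact cube of normalized cycle counts is exactly the right point. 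One small caveat: Theorems~\ref{regular-class-asymptotics} and~\ref{regular-variance} are stated for fixed or convergent sequences of cycle types, so the uniformity in $\lambda$ of the $O(n^{-1})$ error and of the bound $\VV_\lambda[N_v] = O(n^{2k-1})$ is not literally in their statements; it does follow from their proofs (fixed polynomials evaluated on $m_i/n^i \in [0, 1/(i n^{i-1})]$, together with the vanishing of the $x_1^{2p}$ coefficient of $g_{2p}$), but you should say explicitly that you are using the proofs rather than the statements.
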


\begin{proof}
This follows immediately from Corollary~\ref{expectation-fixed-point} with $\alpha = 0$.
\end{proof}

A result analogous to Corollary~\ref{quasirandom-permutation-generation} is possible when the limiting proportion of fixed points tends to any $\alpha \in [0,1]$.
To state this, we need the following definition, introduced in~\cite{HKMRS}.
A \emph{permuton} is a measure $\mu$ on the Borel $\sigma$-algebra of $[0,1] \times [0,1]$ so that for every measurable $A \subseteq [0,1]$
\[
\mu(A \times [0,1]) = \mu([0,1] \times A) = \mathcal{L}_{[0,1]}(A)
\]
where $\mathcal{L}$ is Lebesgue measure.
Given a permuton $\mu$, sample $(X_1,Y_1),\dots,(X_k,Y_k) \sim \mu$ i.i.d., reordereed so that $X_1 < \dots < X_k$, and define $\Pi^\mu_k$ to be the permutation with the same relative order as $(Y_1,\dots,Y_k)$.
A sequence $\{\mu_n\}$ of measures on $\symm_n$ \emph{converges} to $\mu$, denoted $\mu_n \to \mu$, if given $\Sigma_n \sim \mu_n$, for every $k$ and $v \in \symm_k$ we have
\[
\lim_{n \to \infty} \frac{N_v(\Sigma_n)}{\binom{n}{k}} = \PP(\Pi^\mu_k = v).
\]

\begin{theorem}
	\label{alpha-permuton}
Let $\mu_n$ be a sequence of congugacy-invariant probability distributions on $\symm_n$ and $\alpha \in [0,1]$ so that for $\Sigma_n \sim \mu_n$, we have 
\[
\PP\left(\lim_{n \to \infty} \frac{m_1(\Sigma_n)}{n} = \alpha \right) = 1.
\]
Then $\mu_\alpha$ so $\mu_n \to \mu_\alpha$ with $\mu_\alpha = (1-\alpha) U + \alpha I$ where $U$ is Lebesgue measure on $[0,1] \times [0,1]$ and $I$ is Lebesgue measure on $\{(x,x): x\in [0,1]\}$. 
\end{theorem}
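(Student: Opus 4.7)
The plan is to verify $\mu_n \to \mu_\alpha$ by showing that for every $k \ge 1$ and every $v \in \symm_k$,
\[
\frac{N_v(\Sigma_n)}{\binom{n}{k}} \xrightarrow{\PP} \PP\bigl(\Pi^{\mu_\alpha}_k = v\bigr).
\]
Conjugacy-invariance of $\mu_n$ means that, conditional on its random cycle type $\lambda_n$, the permutation $\Sigma_n$ is uniformly distributed on $K_{\lambda_n}$, so our earlier results apply conjugacy class by conjugacy class. Note that $N_v$ is a classical pattern count and hence by Proposition~\ref{p:bivincular} a regular statistic of size and power $k$.

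For the concentration step, Theorem~\ref{regular-class-asymptotics} yields a polynomial $g_v \in \RR[x_1,\dots,x_k]$ with $\EE_\lambda[N_v/n^k] = g_v(m_1/n,\dots,m_k/n^k) + O(n^{-1})$. Since $m_i(\lambda) \le n/i$, the arguments $m_i/n^i$ are $O(1/n)$ for $i \ge 2$, so $\EE_\lambda[N_v/n^k]$ is asymptotically $f_v(m_1(\lambda)/n)$ where $f_v(x) := g_v(x,0,\dots,0)$. Combining Theorem~\ref{regular-variance}, which yields $\VV_\lambda[N_v/\binom{n}{k}] = O(1/n)$, with Chebyshev's inequality and conditioning on $\lambda_n$ gives $N_v(\Sigma_n)/\binom{n}{k} - k!\,f_v(m_1(\lambda_n)/n) \to 0$ in probability. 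The almost-sure hypothesis $m_1(\Sigma_n)/n \to \alpha$ and continuity of $f_v$ then produce $N_v(\Sigma_n)/\binom{n}{k} \xrightarrow{\PP} k!\,f_v(\alpha)$.

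It remains to identify $k!\,f_v(\alpha)$ with $\PP(\Pi^{\mu_\alpha}_k = v)$. As both sides are polynomials in $\alpha$ that must agree along any single sequence $\lambda^{(n)}$ with $m_1(\lambda^{(n)})/n \to \alpha$, it suffices to compute the limit for the convenient choice $\lambda^{(n)} = (n - \lfloor \alpha n \rfloor,\,1^{\lfloor \alpha n \rfloor})$ consisting of one long cycle together with $\lfloor \alpha n \rfloor$ fixed points. For $\pi$ uniform on $K_{\lambda^{(n)}}$ and $I = \{i_1 < \cdots < i_k\}$ a uniformly random $k$-subset of $[n]$, the ratio $N_v(\pi)/\binom{n}{k}$ is exactly the probability that $(i_1,\dots,i_k)$ realizes pattern $v$ under $\pi$. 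The indicator vector $(\one[\pi(i_j)=i_j])_{j=1}^{k}$ is hypergeometric and converges in distribution to $\mathrm{Bern}(\alpha)^{\otimes k}$; conditional on the non-fixed $i_j$'s all lying on the one long cycle, the probability that any two of them are cyclically adjacent is $O(1/n)$, so the $\pi$-images at the non-fixed positions are asymptotically i.i.d.\ uniform on $[n]$ and independent of which positions are fixed. Rescaling by $1/n$, the points $\{(i_j/n,\pi(i_j)/n)\}_{j=1}^k$ converge jointly in distribution to $k$ i.i.d.\ draws from $\mu_\alpha = (1-\alpha)U + \alpha I$, whose relative order realizes pattern $v$ with probability $\PP(\Pi^{\mu_\alpha}_k = v)$ by definition. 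The main technical obstacle is this asymptotic-independence step, which requires simultaneously controlling three coupled $O(1/n)$ effects---hypergeometric vs.\ binomial fluctuation for fixed points, cycle-adjacency among non-fixed positions, and cross-independence of the classifications and the cyclic successors; the cleanest bookkeeping is to first condition on the fixed-index set $F \subseteq [k]$ and then exploit that the cycle structure on the non-fixed positions is a uniformly random cyclic order.
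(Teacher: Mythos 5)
Your argument is correct in outline but routes around the paper's proof in two places, and it is worth comparing. The paper disposes of existence and concentration in one stroke by combining Corollary~\ref{expectation-fixed-point} with \cite[Thm.~1.6]{HKMRS}: once all expected pattern densities converge (to functions of $\alpha$ alone), the permuton machinery supplies concentration for free, because second moments of densities are themselves expressible through densities of larger patterns. You instead prove concentration by hand via Chebyshev, quoting Theorem~\ref{regular-variance}; note that as stated that theorem assumes $m_2(\lambda^{(n)})/n \to \beta$, which is \emph{not} among your hypotheses, so you must extract from its proof the uniform bound $\VV_\lambda[\Psi] = O(n^{2p-1})$ (which does hold, since the vanishing of the $x_1^{2p}$ coefficient of $g_{2p}$ needs no assumption on $m_2$) rather than cite the limit statement itself. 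For the identification step both proofs use the same idea --- the limit depends only on $\alpha$, so compute it on one convenient family with $\sim\alpha n$ fixed points --- but the paper chooses the measure $\mu_{\alpha,n}$ in which the non-fixed values are permuted \emph{uniformly}, so that the off-diagonal part contributes Lebesgue measure $U$ by the standard fact about uniform permutations, whereas you choose a single long cycle on the complement. Your choice forces the delicate cyclic-successor/hypergeometric bookkeeping that you candidly flag as the main technical obstacle and do not fully carry out; you could shortcut it either by switching to the paper's $\mu_{\alpha,n}$, or by invoking the paper's own Corollary~\ref{quasirandom-permutation-generation} (equivalently Proposition~\ref{local-statistic-long}) to see that the long-cycle part is quasi-random. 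What your route buys is self-containedness --- no appeal to \cite{HKMRS} --- at the cost of the variance caveat and a harder model computation; the paper's route is shorter but leans on the external permuton convergence theorem and leaves its ``it is clear'' claims at the same level of detail you reach.
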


\begin{proof}
	The existence of $\mu_\alpha$ follows from Corollary~\ref{expectation-fixed-point} and~\cite[Thm. 1.6]{HKMRS}.
	The explicit description of $\mu_\alpha$ can be seen as follows.
	Let $\mu_{\alpha,n}$ be the measure on $\symm_n$ given by sampling $\lfloor \alpha n \rfloor$ values in $[n]$ uniformly at random, setting those as fixed points, then permuting the remaining values uniformly at random.
	It is clear both that $\mu_{\alpha,n} \to \mu_\alpha$ and that $\mu_{\alpha,n}$ satisfies the assumptions of our theorem.
\end{proof}

The explicit description of $\mu_\alpha$ in Theorem~\ref{alpha-permuton} is due to Valentin F\'eray, and we thank him for allowing us to include his proof.

\section{Final Remarks}
\label{sec:Conclusion}

\subsection{Other groups}
 In this paper we consider functions on the symmetric group.
One can also consider functions on other groups $G$ from a 
 Fourier theoretic perspective (the cases $G = S^1$ and $G = (\ZZ/2\ZZ)^n$ were mentioned in the introduction).
 For representation-theoretic purposes, it is best to consider complex-valued functions on $G$.
 
 \begin{problem}
 \label{other-groups-problem}
 Extend the results of this paper to (complex-valued) functions $f: G \rightarrow \CC$
 on a wider class of groups $G$.
 \end{problem}
 
One class of groups $G$ to which Problem~\ref{other-groups-problem} could be applied are the complex reflection groups.
Given parameters $n, r,$ and $p$ with $p \mid r$, let $G(r,p,n)$ be the group of $n \times n$ complex matrices $A$ with a unique nonzero entry in every 
row and column such that 
\begin{itemize}
\item every nonzero entry of $A$ is a $r^{th}$ root-of-unity, and
\item the product of the nonzero entries of $A$ is a $(r/p)^{th}$ root-of-unity.
\end{itemize}
We have $G(1,1,n) = \symm_n$ and we may identify $G(r,1,n) = (\ZZ/r\ZZ) \wr \symm_n$ with a wreath product, which have been previously studied from a representation-theoretic perspective~\cite{Rockmore} with applications to image processing~\cite{FMRHO}.
A result analogous to Proposition~\ref{local-statistic-long} has been proved for these groups in~\cite{LLLSY2}.

Another natural class of groups are compact simple Lie groups, over $\CC$ or over finite fields.

\subsection{Joint distributions}
 In this paper we exclusively considered real-valued permutation statistics of the form $\Psi:\symm_n \rightarrow \RR$.
It is natural to ask whether our methods can extend to functions of the form $f:\symm_n \rightarrow \RR^k$.
Since $f = (f_1,\dots,f_k)$ with $f_i : \symm_n \to \RR$, this would correspond to understanding the joint distribution of $f_1,\dots,f_k$.

\begin{problem}
\label{multivariate-problem}
	Use our methods to understand the joint distribution of regular functions.
\end{problem}

In the uniform case, the joint distribution of ordinary pattern counting statistics is relatively well understood.
Building on work of Burstein and H\"ast\"o~ for the bivariate case\cite{BH}, Janson, Nakamura and Zeilberger showed $(N_\sigma)_{\sigma \in \symm_k}$ is jointly normal and explicitly computed its covariance matrix~\cite{JNZ}.
Even-Zohar gives refinements of their work by describing some geometric  properties for the image of this $k!$-variate function~\cite{EZ}.

\subsection{Limiting distributions}
\label{ss:normality}

Theorem~\ref{regular-variance} established a quite general characterization of the asymptotic mean and variance for regular statistics.
However, we do not know how to extend it to higher moments.
As such, we cannot apply the method of moments to establish asysmptotic normality results, as we do for the all long cycles case in Theorem~\ref{our-long-cycles}.
However convergence to normal distributions should be much more general.

\begin{problem}
	Given a sequence $\{\lambda^{(n)}\}$ of cycle types, identify conditions on small cycle counts for which regular statistics are asymptotically normal.
\end{problem} 

Using our work, F\'eray and Kammoun have solved this problem for ordinary pattern counts.
Specifically, they demonstrate for $\{\lambda^{(n)}\}$ a sequence of partitions with $\alpha = \lim_{n \to \infty} m_1(\lambda^{(n)})/n$ and $\beta = \lim_{n \to \infty} m_2(\lambda^{(n)})/n$ with $\alpha < 1$ that any vincular statistics is asympotically normal or degenerate~\cite[Thm~1.3]{FK}.
For classical patterns, they establish non-degeneracy~\cite[Thm~1.4]{FK}.
We remark that their methods extend to joint distributions.

Our proofs of convergence to normality ultimately relied on Hofer's Theorem~\ref{hofer-convergence} showing that the statistics $N_{v,A}$ 
  converge to normality on all of $\symm_n$, or alternatively on~\cite{Janson}.
The F\'eray-Slim proofs rely critically on Hofer's methods as well.

  \begin{problem}
  \label{hofer-problem}
  Use the techniques of this paper to prove Hofer's Theorem~\ref{hofer-convergence}.
  More generally, use the methods of this paper to describe further families of regular permutation statistics that are asymptotically normal.
  \end{problem}
  
Hofer's proof relies on dependency graphs, and ultimately reduces to showing the variance of a regular statistic, as normalized in Theorem~\ref{regular-variance}, does not vanish.
The F\'eray--Kammoun work also relies on this method.
Alternatively, a na\"ive approach to Problem~\ref{hofer-problem} via the Method of Moments would identify the expansion of $N_{v,A}^d$ into constraint translates.
While the $T$-expansions of the statistics $N_{v,A}^d$ become very complicated as $d$ grows, it is plausible the leading order terms are easy to identify.

It would also be interesting to find the limiting distributions of regular statistics that are not asymptotically normal on a sequence of cycle types.
In~\cite{CLP}, the authors study a family of bivincular patters called \emph{very tight}, which converge to a Poisson distribution for patterns of size $k = 2$.
When $k > 2$, the probability to find a very tight pattern of size $k$ in a permutation in $\symm_n$ tends to zero.
Characterizing the possible limiting distributions for vincular statistics in general would be an interesting challenge.
Towards this end, unpublished work of Coopman uses similar methods to identify the unweighted constrained translates that are asymptotically normal~\cite{Coopman}.

\subsection{Other combinatorial objects}
 This paper was exclusively concerned with functions $f: \symm_n \rightarrow \CC$ constructed as linear combinations of indicators for local behavior.
 
 \begin{problem}
 \label{other-objects-problem}
 Extend this approach to the study of statistics $f: \Omega_n \rightarrow \CC$ on other classes $\Omega_n$ of combinatorial objects.
 \end{problem}
 
 For example, at the end of Section~\ref{ss:regular-example} we mention the case  $\Omega_n = \MMM_n$ of perfect matchings by means of the identification 
 $\MMM_n = K_{2^n}$ with fixed-point-free involutions.
One could also define indicator functions for local behavior on the class $\Omega_n = \AAA^n$ of all length $n$ words over a given alphabet $\AAA$ or
 (following Chern-Diaconis-Kane-Rhoades \cite{CDKR1, CDKR2}) the class $\Omega_n = \Pi_n$ of set partitions of $[n]$.

 For $r$ fixed, another setting to which Problem~\ref{other-objects-problem} can be applied is the class
 $\Omega_n = \VVV_{n,r}$ of $r$-uniform hypergraphs on $[n]$.
 A hypergraph $\gamma \in \VVV_{n,r}$ can be thought
 of as Boolean functions $\gamma: \{0,1\}^{\binom{n}{r}} \rightarrow \{0,1\}$ on the vertex set of a hypercube of dimension ${n \choose r}$.
Here, a natural collection of indicators have been introduced and studied from a representation theoretic perspective in~\cite{RSST}.
However, their results are analogous to Proposition~\ref{local-statistic-long} and do not explain the precise dependence of hypergraph statistics on small subgraph counts.

\bibliographystyle{amsalpha}
\bibliography{local-bib}

\newcommand{\etalchar}[1]{$^{#1}$}
\providecommand{\bysame}{\leavevmode\hbox to3em{\hrulefill}\thinspace}
\providecommand{\MR}{\relax\ifhmode\unskip\space\fi MR }
% \MRhref is called by the amsart/book/proc definition of \MR.
\providecommand{\MRhref}[2]{%
  \href{http://www.ams.org/mathscinet-getitem?mr=#1}{#2}
}
\providecommand{\href}[2]{#2}
\begin{thebibliography}{HKM{\etalchar{+}}13}

\bibitem[B\']{Bona}
M.~B\'{o}na, \emph{The copies of any permutation pattern are asymptotically normal}, Preprint, 2007. {\tt arXiv:0712.2792}.

\bibitem[BH10]{BH}
A.~Burstein and P.~H\"ast\"o, \emph{Packing sets of patterns}, Eur. J. Comb. \textbf{31} (2010), no.~1, 241--253.

\bibitem[CDKR14]{CDKR1}
B.~Chern, P.~Diaconis, D.~Kane, and R.~Rhoades, \emph{Closed expressions for averages of set partition statistics}, Res. Math. Sci. \textbf{1} (2014), no.~1, 1--32.

\bibitem[CDKR15]{CDKR2}
\bysame, \emph{Central limit theorems for some set partition statistics}, Adv. Appl. Math. \textbf{70} (2015), 92--105.

\bibitem[CLP06]{CLP}
S.~Corteel, G.~Louchard, and R.~Pemantle, \emph{Common intervals in permutations}, Discrete Math. Theor. Comput. Sci. \textbf{8} (2006), no.~1, 189--216.

\bibitem[Coo]{Coopman}
Michael Coopman, \emph{Private communication}.

\bibitem[Coo04]{Cooper}
J.~Cooper, \emph{Quasirandom permutations}, J. Combin. Theory Ser. A \textbf{106} (2004), no.~1, 123--143.

\bibitem[DK24]{DK}
Stoyan Dimitrov and Niraj Khare, \emph{Moments of permutation statistics and central limit theorems}, Advances in Applied Mathematics \textbf{155} (2024), 102650.

\bibitem[Dub24]{Dubach}
Victor Dubach, \emph{A geometric approach to conjugation-invariant random permutations}, arXiv preprint arXiv:2402.10116 (2024).

\bibitem[EZ20]{EZ}
C.~Even-Zohar, \emph{Patterns in random permutations}, Combinatorica \textbf{40} (2020), 775--804.

\bibitem[F\'13]{Feray}
V.~F\'{e}ray, \emph{Asymptotic behavior of some statistics in ewens random permutations}, Electron. J. Probab. \textbf{18} (2013), no.~76, 1--32.

\bibitem[FK23]{FK}
Valentin F{\'e}ray and Mohamed~Slim Kammoun, \emph{Asymptotic normality of pattern counts in conjugacy classes}, arXiv preprint arXiv:2312.08756 (2023).

\bibitem[FKL22]{FKL}
J.~Fulman, G.~Kim, and S.~Lee, \emph{Central limit theorem for peaks of a random permutation in a fixed conjugacy class of ${S}_n$}, Ann. Comb. \textbf{26} (2022), 97--123.

\bibitem[FMR{\etalchar{+}}00]{FMRHO}
R.~Foote, G.~Mirchandani, D.N. Rockmore, D.~Healy, and T.~Olson, \emph{A wreath product group approach to signal and image processing .i. multiresolution analysis}, IEEE Trans. Sign. Proc. \textbf{48} (2000), no.~1, 102--132.

\bibitem[Ful98]{Fulman}
J.~Fulman, \emph{The distribution of descents in fixed conjugacy classes of the symmetric groups}, J. Combin. Theory Ser. A \textbf{84} (1998), no.~2, 171--180.

\bibitem[GP]{GP}
C.~Gaetz and L.~Pierson, \emph{Positivity of permutation pattern character polynomials}, Preprint, 2022. {\tt arXiv:2204.10633}.

\bibitem[GR21]{GR}
C.~Gaetz and C.~Ryba, \emph{Stable characters from permutation patterns}, Selecta Math. \textbf{27} (2021), no.~4, 70.

\bibitem[HKM{\etalchar{+}}13]{HKMRS}
C.~Hoppen, Y.~Kohayakawa, C.G. Moreira, B.~R\'ath, and R.M. Sampaio, \emph{Limits of permutation sequences}, J. Combin. Theory Ser. B \textbf{103} (2013), no.~1, 93--113.

\bibitem[Hof18]{Hofer}
L.~Hofer, \emph{A central limit theorem for vincular permutation patterns}, Discrete Math. Theor. Comput. Sci. \textbf{19} (2018), no.~2, 1--26.

\bibitem[HR22]{HR}
Zachary Hamaker and Brendon Rhoades, \emph{Characters of local and regular permutation statistics}, arXiv preprint arXiv:2206.06567 (2022).

\bibitem[HR25]{HR2}
\bysame, \emph{Partial permutations and character evaluations}, preprint (2025).

\bibitem[Hul14]{Hultman}
A.~Hultman, \emph{Permutation statistics of products of random permutations}, Adv. Appl. Math. \textbf{54} (2014), 1--10.

\bibitem[Jan23]{Janson}
Svante Janson, \emph{Asymptotic normality for $m$-dependent and constrained ${U}$-statistics, with applications to pattern matching in random strings and permutations}, Adv. in App. Prob. (2023), 1--54.

\bibitem[JNZ15]{JNZ}
S.~Janson, B.~Nakamura, and D.~Zeilberger, \emph{On the asymptotic statistics of the number of occurrences of multiple permutation patterns}, J. Comb. \textbf{6} (2015), no.~1, 117--143.

\bibitem[KL20]{KL1}
G.~Kim and S.~Lee, \emph{Central limit theorem for descents in conjugacy classes of ${S}_n$}, J. Combin. Theory Ser. A \textbf{169} (2020), 13.

\bibitem[KLY17]{KLY}
N.~Khare, R.~Lorentz, and C.~Yan, \emph{Moments of matching statistics}, J. Comb. \textbf{8} (2017), no.~1, 1--27.

\bibitem[KP13]{KP}
D.~Kr\'al' and O.~Pikhurko, \emph{Quasirandom permutations and characterized by 4-point densities}, Geom. Funct. Anal. \textbf{23} (2013), 570--579.

\bibitem[LLL{\etalchar{+}}23]{LLLSY2}
Jesse~Campion Loth, Michael Levet, Kevin Liu, Sheila Sundaram, and Mei Yin, \emph{Colored permutation statistics by conjugacy class}, arXiv preprint arXiv:2305.11800 (2023).

\bibitem[Roc95]{Rockmore}
D.N. Rockmore, \emph{Fast fourier transforms for wreath products}, App. Comp. Harm. Anal. \textbf{2} (1995), no.~3, 279--292.

\bibitem[RSST18]{RSST}
A.~Raymond, J.~Saunderson, M.~Singh, and R.~Thomas, \emph{Symmetric sums of squares over $k$-subset hypercubes}, Math. Prog. \textbf{167} (2018), no.~2, 315--354.

\bibitem[Zei04]{Zeilberger}
Doron Zeilberger, \emph{Symbolic moment calculus {I}: foundations and permutation pattern statistics}, Annals of Combinatorics \textbf{8} (2004), 369--378.

\end{thebibliography}

\end{document}